\documentclass[12pt]{amsart}

\usepackage{graphicx,pdfpages,lscape,longtable,adjustbox,tikz,array,geometry,subcaption}
\captionsetup[subfigure]{labelfont=rm} 
\usepackage[fontsize=14pt]{scrextend}
\usepackage{amsthm}
\usepackage{enumitem}
\usepackage[]{url}
\makeatletter
\renewcommand\@biblabel[1]{}
\makeatother

\newtheorem{theorem}{Theorem}
\newtheorem{prop}[theorem]{Proposition}
\newtheorem{lemma}[theorem]{Lemma}
\newtheorem{cor}[theorem]{Corollary}
\newtheorem{conj}[theorem]{Conjecture}
\theoremstyle{definition}
\newtheorem{definition}[theorem]{Definition}

\theoremstyle{remark}
\newtheorem{remark}[theorem]{Remark}

\newcolumntype{R}[2]{%
    >{\adjustbox{angle=#1,lap=\width-(#2)}\bgroup}%
    l%
    <{\egroup}%
	}

\numberwithin{theorem}{section}
\newcommand{\R}{\mathbb{R}}

\newcommand{\paren}[1]{\left(#1\right)}
\newcommand{\set}[1]{\left\{#1\right\}}
\newcommand{\bracket}[1]{\left[#1\right]}
\newcommand{\abs}[1]{\left\lvert#1\right\rvert}
\newcommand{\mb}{\mathbb}

\newcommand\numcasesinitial{2074}
\newcommand\numcasesaftereintmin{29}
\newcommand\numcasesafterelim{7}

\newcommand{\tetrahedron}[7]{
\begin{tikzpicture}[scale=#7,transform shape]
\draw [black] (0,1.5) node [anchor=south] {1} -- (-1,0) node [anchor=east] {2};
\node [anchor=south east] at (-.4,.6) {$#1$};
\draw [black] (0,1.5) -- (0,-.5) node [anchor=north] {3};
\node [anchor=east] at (.05,.6) {$#2$};
\draw [black] (0,1.5) -- (1,0) node [anchor=west] {4};
\node [anchor=south west] at (.4,.6) {$#3$};
\draw [black] (-1,0) -- (0,-.5);
\node [anchor=north east] at (-.4,-.2) {$#4$};
\draw [dashed, black] (-1,0) -- (1,0);
\node [anchor=south] at (.4,-.1) {$#5$};
\draw [black] (0,-.5) -- (1,0);
\node [anchor=north west] at (.4,-.2) {$#6$};
\end{tikzpicture}
}

\newcommand{\pathlentwo}[5]{
\begin{tikzpicture}[scale=#5, transform shape]
\draw [black] (0,0) node [anchor=east] {$#1$}
-- node [anchor=south] {#3}
(#4,0) node[anchor=west] {$#2$};
\end{tikzpicture}
}

\newcommand{\pathlenthree}[8]{
\begin{tikzpicture}[scale=#8, transform shape]
\node (0,0) {$#2$};
\draw [black] (-#6/2,0)
-- node [anchor=south] {#4}
(-#7-#6/2,0) node [anchor=east] {$#1$};
\draw [black] (#6/2,0)
-- node [anchor=south] {#5}
(#7+#6/2,0) node [anchor=west] {$#3$};
\end{tikzpicture}
}

\newcommand{\pathlenfour}[9]{
\def\ArgI{#1}
\def\ArgII{#2}
\def\ArgIII{#3}
\def\ArgIV{#4}
\def\ArgV{#5}
\def\ArgVI{#6}
\def\ArgVII{#7}
\def\ArgVIII{#8}
\def\ArgIX{#9}
\pathlenfourcontinued
}

\newcommand{\pathlenfourcontinued}[3]{
\begin{tikzpicture}[scale=#3, transform shape]
\draw [black] (-\ArgVIII/2,0) node [anchor=east] {$\ArgII$}
-- node [anchor=south] {\ArgVI}
(\ArgVIII/2,0) node [anchor=west] {$\ArgIII$};
\draw [black] (-\ArgVIII/2-\ArgIX,-#1)
-- node [anchor=east] {\ArgV}
(-\ArgVIII/2-\ArgIX,-#1-#2) node [anchor=north] {$\ArgI$};
\draw [black] (\ArgVIII/2+\ArgIX,-#1)
-- node [anchor=west] {\ArgVII}
(\ArgVIII/2+\ArgIX,-#1-#2) node [anchor=north] {$\ArgIV$};
\end{tikzpicture}
}

\newcommand{\pathlenfive}[9]{
\def\ArgI{#1}
\def\ArgII{#2}
\def\ArgIII{#3}
\def\ArgIV{#4}
\def\ArgV{#5}
\def\ArgVI{#6}
\def\ArgVII{#7}
\def\ArgVIII{#8}
\def\ArgIX{#9}
\pathlenfivecontinued
}

\newcommand{\pathlenfivecontinued}[6]{
\begin{tikzpicture}[scale=#6, transform shape]
\node (0,0) {$\ArgIII$};
\draw [black] (-#1/2,0)
-- node [anchor=south] {\ArgVII}
(-#1/2-#2,0) node [anchor=east] {$\ArgII$};
\draw [black] (-#1/2-#2-#3,-#4)
-- node [anchor=east] {\ArgVI}
(-#1/2-#2-#3,-#4-#5) node [anchor=north] {$\ArgI$};
\draw [black] (#1/2,0)
-- node [anchor=south] {\ArgVIII}
(#1/2+#2,0) node [anchor=west] {$\ArgIV$};
\draw [black] (#1/2+#2+#3,-#4)
-- node [anchor=west] {\ArgIX}
(#1/2+#2+#3,-#4-#5) node [anchor=north] {$\ArgV$};
\end{tikzpicture}
}

\newcommand{\trianglegraph}[9]{
\def\ArgI{#1}
\def\ArgII{#2}
\def\ArgIII{#3}
\def\ArgIV{#4}
\def\ArgV{#5}
\def\ArgVI{#6}
\def\ArgVII{#7}
\def\ArgVIII{#8}
\def\ArgIX{#9}
\trianglegraphcontinued
}

\newcommand{\trianglegraphcontinued}[4]{
\begin{tikzpicture}[scale=#4, transform shape]
\node (0,0) {$\ArgII$};
\draw [black] (-\ArgVII/2,-\ArgVIII)
-- (-\ArgVII/2-\ArgIX,-\ArgVIII-#1);
\node [anchor=east] at (-\ArgVII/2-\ArgIX/2,-\ArgVIII-#1/2+#3) {\ArgIV};
\draw [black] (\ArgVII/2,-\ArgVIII)
-- (\ArgVII/2+\ArgIX,-\ArgVIII-#1);
\node [anchor=west] at (\ArgVII/2+\ArgIX/2,-\ArgVIII-#1/2+#3) {\ArgV};
\draw [black] (-\ArgVII/2-\ArgIX,-\ArgVIII-#1-#2) node [anchor=east] {$\ArgI$}
-- node [anchor=north] {\ArgVI}
(\ArgVII/2+\ArgIX,-\ArgVIII-#1-#2) node [anchor=west] {$\ArgIII$};
\end{tikzpicture}
}


\begin{document}

\title{The Least-Area Tetrahedral Tile of Space}

\author{Eliot Bongiovanni, Alejandro Diaz, Arjun Kakkar, Nat Sothanaphan}

\begin{abstract}
We determine the least-area unit-volume tetrahedral tile of Euclidean space, without the constraint of Gallagher \emph{et al.} that the tiling uses only orientation-preserving images of the tile. The winner remains Sommerville's type 4v.
\end{abstract}

\maketitle

\setcounter{tocdepth}{1}
\tableofcontents

\section{Introduction}
Gallagher \emph{et al.} \cite{Gal} showed that the least-surface-area, unit-volume, face-to-face, tetrahedral tile is Sommerville's \cite{So} type 4v, referred to in our paper as Sommerville No. 1 and illustrated in Figure \ref{fig:sommerville1},
assuming that the tiling is orientation preserving (does not need to reflect the tile).

\begin{figure}
\centering
\begin{subfigure}{.5\textwidth}
\begin{tikzpicture}[scale=0.5,transform shape]
\draw [black] (4,-1.2) -- (0,-12.5);
\draw [black] (-3.8,-9.5) -- (0,-12.5);
\draw [black] (0,-12.5) -- (7.5,-9.5);
\draw [dashed,black] (-3.8,-9.5) -- (7.5,-9.5);
\draw [black] (4,-1.2) -- (-3.8,-9.5);
\draw [black] (4,-1.2) -- (7.5,-9.5);

\node [anchor=center] at (-1.5, -5.5){\LARGE $2, \pi/2$};
\node [anchor=north west] at (3.8, -11){\LARGE $2, \pi/2$};
\node [anchor=center] at (4, -9){\LARGE $\sqrt{3}, \pi/3$};
\node [anchor=center] at (7, -4.8){\LARGE $\sqrt{3}, \pi/3$};
\node [anchor=center] at (-3, -12){\LARGE $\sqrt{3}, \pi/3$};
\node [anchor=center] at (2, -7){\LARGE $\sqrt{3}, \pi/3$};
\end{tikzpicture}
   \centering
   \caption{}
   \label{fig:som1tile_sub1}
\end{subfigure}
\begin{subfigure}{.4\textwidth}
\begin{tikzpicture}[scale=0.5,transform shape]
\draw [blue] (0,0) -- (-3.8,-9.5);
\draw [blue] (10.5,0) -- (7.5,-9.5);
\draw [red] (4,-1.2) -- (0,-12.5);

\draw [blue] (0,0) -- (10.5,0);
\draw [blue] (0,0) -- (4,-1.2);
\draw [blue] (4,-1.2) -- (10.5,0);

\draw [dashed, red] (-3.8,-9.5) -- (7.5,-9.5);
\draw [red] (-3.8,-9.5) -- (0,-12.5);
\draw [red] (0,-12.5) -- (7.5,-9.5);

\draw [red] (4,-1.2) -- (-3.8,-9.5);
\draw [red] (4,-1.2) -- (7.5,-9.5);
\draw [dashed,blue] (0,0) -- (7.5,-9.5);

\node [anchor=center] at (3.0, -11.5){\LARGE $2$};
\node [anchor=center] at (4, -9.5){\LARGE $\sqrt{3}$};
\node [anchor=center] at (-2, -11.5){\LARGE $\sqrt{3}$};
\node [anchor=center] at (2, -7){\LARGE $\sqrt{3}$};

\node [anchor=center] at (6.5, -1){\LARGE $2$};
\node [anchor=center] at (2, -1){\LARGE $\sqrt{3}$};
\node [anchor=center] at (5, 0.3){\LARGE $\sqrt{3}$};

\node [anchor=center] at (-1.9, -4.8){\LARGE $\sqrt{3}$};
\node [anchor=center] at (9, -4.8){\LARGE $\sqrt{3}$};
\end{tikzpicture}
  \centering
  \caption{}
  \label{fig:som1tile_sub2}
\end{subfigure}
\caption{(\ref{fig:som1tile_sub1}): The Sommerville No. 1 tetrahedral tile labeled with edge lengths and dihedral angles. (\ref{fig:som1tile_sub2}): The arrangement of three copies of the Sommerville No. 1 tile into
a triangular prism, proving that it tiles space.}
\label{fig:sommerville1}
\end{figure}

We prove that the Sommerville No. 1 even among non-orientation-preserving tilings is still surface area minimizing.

\begin{theorem}[Thm. \ref{thm:bigtheorem}]
The least-area, face-to-face tetrahedral tile
of unit volume is uniquely the Sommerville No. 1.
\end{theorem}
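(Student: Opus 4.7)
The plan is to extend the combinatorial case analysis of Gallagher \emph{et al.} to encompass tilings that use reflected copies of the tile. The geometric framework carries over unchanged: any face-to-face tetrahedral tile $T$ has six edges with prescribed dihedral angles $\theta_1,\ldots,\theta_6$ and lengths $\ell_1,\ldots,\ell_6$; at every edge of the tiling the dihedral angles of the tiles meeting there must sum to $2\pi$, while their lengths must all agree. Dropping orientation preservation only enlarges the set of admissible local configurations, so the combinatorial case list grows while the underlying constraints stay the same.

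First I would enumerate admissible \emph{edge stars}. For each edge of $T$ of length $\ell$, list the multisets $\{\theta_{j_1},\ldots,\theta_{j_k}\}$ of $T$'s dihedral angles, drawn only from edges of $T$ of length $\ell$, that sum to $2\pi$. A combinatorial profile for $T$ is an assignment of one such multiset to each of $T$'s six edges; enumerating all profiles yields an initial list of $\numcasesinitial$ cases.

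Second, I would whittle the list down by a sequence of sieves: an \emph{interior-minimum} bound (the edge carrying the smallest dihedral angle forces a lower bound on the star size, while the largest forces an upper bound), face-to-face matching conditions forcing each face of $T$ to pair with a congruent face of $T$ up to reflection or rotation, and the metric realizability constraints (Cayley--Menger nonnegativity and triangle inequalities on faces). These reduce the count first to $\numcasesaftereintmin$ and then to $\numcasesafterelim$ cases. For each of the surviving $\numcasesafterelim$ profiles I would solve the closed system relating the $\theta_i$ and $\ell_i$, scale $T$ to unit volume, and compute its surface area, verifying that the Sommerville No. 1 strictly beats every competitor and ties only with itself (with existence witnessed by the prism decomposition in Figure \ref{fig:sommerville1}).

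The principal obstacle is the combinatorial blow-up in the first two steps: without orientation preservation many more local stars become admissible, and the reductions from $\numcasesinitial$ to $\numcasesaftereintmin$ to $\numcasesafterelim$ require a carefully sequenced set of arithmetic, metric, and face-matching sieves. Handling each of the $\numcasesafterelim$ surviving cases is then largely a direct optimization, but ensuring that the sieve is \emph{complete} -- that no admissible profile is inadvertently discarded -- is the delicate technical core of the argument.
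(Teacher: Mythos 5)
Your outline has the right overall shape (a finite case analysis on edge stars, sieved down to $\numcasesafterelim$ survivors), but it is missing the three ideas that actually make the paper's argument work, and as written the sieve is neither finite nor complete. First, finiteness: for a single fixed tetrahedron the multisets of dihedral angles summing to $2\pi$ are finite, but you are enumerating over \emph{all} candidate tetrahedra, so nothing in your ``interior-minimum bound'' caps the star sizes. The paper gets the cap from isoperimetry, not from tiling: any unit-volume tetrahedron with surface area at most that of the Sommerville No.\ 1 has every dihedral angle at least $\theta_0\approx 36.6^\circ$ (Prop.~\ref{prop:lowerbddihedral}, Cor.~\ref{cor:dihedralbdsommerville}), so at most $9$ copies stack around an edge and the coefficient vectors are bounded. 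Without this your initial list is not $\numcasesinitial$ cases; it is not even finite. Second, organization: which dihedral angles may appear together in a star is governed by which edges of $T$ share a length, so the enumeration must be organized by the classification of tetrahedra into $25$ edge-length types (Prop.~\ref{prop:25typescomplete}). Fifteen of these types are not handled by any case search at all: ten are mirror-symmetric, hence can only tile orientation-preservingly and fall under Edmonds' completeness theorem for Sommerville's list; the remaining five are dispatched by the Edmonds (xi) result and by the complete enumeration of tetrahedra with all dihedral angles of the form $2\pi/n$ (Thms.~\ref{thm:2pi/nall}--\ref{thm:2pi/ntile}), which itself requires the edge-length-graph machinery (closed walks, parity of coefficients) that your ``profile'' formalism does not capture. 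The $\numcasesinitial$ cases live only in the ten remaining types.

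Third, the endgame is not ``solve a closed system and compute one surface area.'' Several of the $\numcasesafterelim$ survivors are underdetermined one-parameter families that genuinely tile (with reflections) --- they are members of Goldberg's infinite families --- so the comparison with the Sommerville No.\ 1 requires minimizing $S^3/V^2$ over each family (Thm.~\ref{thm:goldbergmin}), not evaluating a single competitor. One survivor (RC1) cannot be eliminated by area at all and must be shown not to tile via a solid-angle divisibility argument ($\Omega_1$ must divide $4\pi$ for a type-(r) tile, and interval arithmetic shows $4\pi/\Omega_1\in[17.32,17.46]$). Your concern about completeness of the sieve is well placed, but the specific sieves you name (face-matching, Cayley--Menger) are not the ones doing the work; the paper's reductions come from overdetermined determinant conditions on the dihedral angles (Table~\ref{tab:10typesdetcond}) verified by rigorous interval minimization.
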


The least-surface-area property in question could be interpreted as being the closest one to the regular tetrahedron among all face-to-face space-tiling tetrahedra.

Currently, there is no list of all tetrahedral tiles.
Moreover, there are infinitely many tetrahedra that tile in a non-orientation-preserving manner, all known examples provided by Goldberg \cite{Gol}.
We characterize a subclass of tetrahedra that includes all possible candidates for surface area minimization and find that there are none that beat the Sommerville No. 1. First, we classify all tetrahedra into 25 types (Sect. \ref{sec:classification}), fifteen with the tiles completely identified. The Sommerville No. 1 is the least-area tile among these fifteen types. 
The remaining ten types can be reduced to a total of \numcasesinitial\ cases by certain linear equations and bounds on the dihedral angles (Sects. \ref{sec:10types}--\ref{sec:code}). 
In particular, every dihedral angle must be greater than 36.5 degrees (Cor. \ref{cor:dihedralbdsommerville}).
Necessary conditions
for dihedral angles of tetrahedra of the given type, implemented by careful and rigorous computer code, reduce the cases from \numcasesinitial\ to seven (Sect. \ref{sec:code}). These seven cases are eliminated by special arguments (Sect. \ref{sec:remainingcases}), leaving no candidate to tile with less surface area than the Sommerville No. 1.

Trivial Dehn invariant is a necessary but not sufficient condition for a polyhedron to tile and is therefore a potentially useful tool in narrowing down to potential tiles. Unfortunately, we were not able to find a useful way to compute Dehn invariants.

As a consequence of searching for the surface-area-minimizing tetrahedral tile without a complete list of tetrahedral tiles, this paper also completely characterizes the tiles of fifteen of 25 types and provides some necessary conditions for the remaining ten types to tile. This paper is only concerned with surface area minimization, but those interested in tetrahedral tiles in general may find this information useful. The known properties of all 25 types are given in Sections \ref{sec:classification}--\ref{sec:10types}. Many of the methods used here, particularly the edge-length graphs described in Section \ref{sec:edgelengraphs}, could be extended to tiles of other $n$-hedra or non-face-to-face tiles (Section \ref{sec:besttile}).

Even the characterization of convex planar tiles was settled only recently with Rao's  \cite{Rao} characterization of pentagonal tilings.

As for general $n$-hedral tiles, Gallagher \emph{et al.} \cite{Gal} give conjectures for all values of $n$. The only previous proofs were for the cube ($n=6$) and the triangular prism ($n=5$).
In both cases, the result for the least-area tile is simply the least-area $n$-hedron, which happens to tile.
This paper presents the first proven case where requiring an $n$-hedron to tile changes the surface area minimizer.

\subsection*{Outline of paper}
Section \ref{sec:tetrahedra} sets terminology and summarizes previously-known facts about tetrahedra that are used in proofs throughout this paper. Section \ref{sec:prevtiles} gives a comprehensive review of known tetrahedral tiles and proves that the Sommerville No. 1 is the least-area tile among known tetrahedral tiles.

Section \ref{sec:edgelengraphs} introduces a combinatorial object called an \emph{edge-length graph}, which is used to deduce linear systems of dihedral angles that are necessary conditions for tetrahedral tiles.

Section \ref{sec:2pi/n} completely identifies all tiling tetrahedra whose dihedral angles are all of the form $2\pi/n$.

Section \ref{sec:classification} classifies all tetrahedra into 25 types. Of these 25 types, the tiling behavior of eleven types has been completely characterized by previous literature. An additional four types are proven not to tile, which was not previously addressed by the literature. Some necessary conditions for the remaining ten types to tile are given in Section \ref{sec:10types}.

Section \ref{sec:boundiso} provides a lower bound on dihedral angles that allows us to search the remaining ten types for a tetrahedral tile with less surface area than the Sommerville No. 1.

Section \ref{sec:code} describes how computer code is used with necessary conditions from Sections \ref{sec:10types} and \ref{sec:boundiso} to reduce the number of candidates to beat the Sommerville No. 1 to seven remaining cases. Section \ref{sec:remainingcases} proves that none of these remaining seven cases tiles with less surface area than the Sommerville No. 1.

Finally, in Section \ref{sec:besttile} we conclude that the Sommerville No. 1 is the least-surface-area tetrahedral tile. We comment on how methods used in this paper may extend to tilings of other polyhedra or non-face-to-face-tiles. We conjecture that the list of tetrahedral tiles identified in previous literature is exhaustive. We also conjecture that a surface-area-minimizing $n$-hedron is identical to its mirror image.

\subsection*{Note on code} 
Many proofs in this paper are computer-assisted.
The code for these computations is available in the GitHub repository at
\url{https://github.com/arjunkakkar8/Tetrahedra}.

\subsection*{Acknowledgements} 
The results presented in this paper came from research conducted by the 2017 Geometry Group at the Williams College SMALL NSF REU. We would like to thank our adviser Frank Morgan for his guidance on this project, as well as Stan Wagon \cite[Chapt. 4]{BLWW} for sending us his code
on reliable computation methods used in this paper.
We would also like to thank the National Science Foundation; Williams College; Michigan State University; University of Maryland, College Park; and Massachusetts Institute of Technology.

\section{Tetrahedra and Tilings}
\label{sec:tetrahedra}
Section \ref{sec:tetrahedra} sets terminology and summarizes previously known facts about tetrahedra that are used in proofs throughout this paper.

We use notation consistent with previous literature. 
Let $T=V_1V_2V_3V_4$ be a tetrahedron where $V_1,V_2,V_3,V_4$ are vertices in $\mb{R}^3$. 
For $\set{i,j,k,l}=\set{1,2,3,4}$, let $F_i$ be the face opposite the vertex $V_i$ with area denoted $\abs{F_i}$. Let $e_{ij}$ be the edge joining $V_i$ and $V_j$ with length denoted $d_{ij}=\abs{e_{ij}}$. (Note that the edge between the faces $F_i$ and $F_j$ is $e_{kl}$, not $e_{ij}$.) Let $\theta_{ij}$ be the dihedral angle between two faces adjacent to the edge $e_{ij}$, i.e. between faces $F_k$ and $F_l$. 

We assume that all tilings are face-to-face.
We make a distinction between orientation-preserving and non-orientation-preserving tilings.
\begin{definition}
\label{def:orientation-preserving}
A tiling is \emph{orientation-preserving} if any two tiles are equivalent under an orientation-preserving isometry of $\R^3$.
\end{definition}
\noindent If a tetrahedron tiles without the use of its mirror image or if the tetrahedron is not distinct from its mirror image, then the resulting tiling is orientation-preserving.
On the other hand, if a tetrahedron tiles with the use of its distinct mirror image, then the resulting tiling is non-orientation-preserving.

\medskip
Lemmas \ref{lem:lawcosine}--\ref{lem:computedihedral} summarize previously known and useful facts.

\begin{lemma}[Law of cosines for a tetrahedron]
\label{lem:lawcosine}
$$\abs{F_1}^2 = \abs{F_2}^2 + \abs{F_3}^2 + \abs{F_4}^2
- 2\paren{\abs{F_2}\abs{F_3} \cos \theta_{14} +\abs{F_2}\abs{F_4} \cos \theta_{13} +\abs{F_3}\abs{F_4} \cos \theta_{12}}.$$
\end{lemma}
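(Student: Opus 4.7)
The plan is to mimic the proof of the classical planar law of cosines, but using face area vectors in place of edge vectors. To each face $F_i$, assign an outward-pointing normal vector $\vec{A}_i$ whose magnitude equals $|F_i|$. The first step is to observe the vector identity
$$\vec{A}_1 + \vec{A}_2 + \vec{A}_3 + \vec{A}_4 = \vec{0},$$
which holds for any closed polyhedral surface and can be justified by applying the divergence theorem to each of the three constant vector fields $\vec{e}_x, \vec{e}_y, \vec{e}_z$, or equivalently by noting that projecting the tetrahedron onto any plane gives equal area on the "top" and "bottom" facing sides.

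Next, I would rewrite this identity as $\vec{A}_1 = -(\vec{A}_2 + \vec{A}_3 + \vec{A}_4)$ and take the squared magnitude of both sides to obtain
$$\abs{F_1}^2 = \abs{F_2}^2 + \abs{F_3}^2 + \abs{F_4}^2 + 2\paren{\vec{A}_2\cdot\vec{A}_3 + \vec{A}_2\cdot\vec{A}_4 + \vec{A}_3\cdot\vec{A}_4}.$$
It then remains to evaluate each inner product. The key geometric fact is that the dihedral angle $\theta_{ij}$ at the edge $e_{ij}$, which sits between faces $F_k$ and $F_l$, is the interior angle, whereas the outward unit normals to $F_k$ and $F_l$ make the supplementary exterior angle $\pi - \theta_{ij}$. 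Hence $\vec{A}_k\cdot\vec{A}_l = -\abs{F_k}\abs{F_l}\cos\theta_{ij}$.

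Applying this to each pair $\{k,l\}\subset\{2,3,4\}$ — with $\{i,j\}$ the complementary pair in $\{1,2,3,4\}$, so that $(k,l)=(2,3)$ pairs with $\theta_{14}$, $(k,l)=(2,4)$ pairs with $\theta_{13}$, and $(k,l)=(3,4)$ pairs with $\theta_{12}$ — substituting yields the claimed formula. The only subtle step is the correct bookkeeping of which dihedral angle corresponds to which pair of faces (the "opposite edge" convention in the paper), so I would double-check that small indexing point explicitly; the rest is immediate.
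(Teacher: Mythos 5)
Your argument is correct and complete. Note that the paper itself offers no proof of this lemma---it is listed among ``previously known and useful facts''---so there is nothing internal to compare against; your derivation from the closed-surface identity $\vec{A}_1+\vec{A}_2+\vec{A}_3+\vec{A}_4=\vec{0}$ for outward area vectors, together with $\vec{A}_k\cdot\vec{A}_l=-\abs{F_k}\abs{F_l}\cos\theta_{ij}$ (outward normals meeting at the exterior angle $\pi-\theta_{ij}$), is the standard route and your index bookkeeping ($F_2,F_3$ meet along $e_{14}$, etc.) matches the paper's convention that the edge between $F_k$ and $F_l$ is $e_{ij}$. The same vanishing-sum identity, projected onto the normal of $F_1$, also yields the paper's Lemma \ref{lem:dihedralrelation}, so your starting point is consistent with how the paper uses these facts.
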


Lemma \ref{lem:dihedralrelation} relates the areas of the faces to the dihedral angles between them.  Geometrically, it says that the area of the base face $F_1$ is equal to the sum of the areas of the projections of the remaining faces. 

\begin{lemma}
\label{lem:dihedralrelation}
$$\abs{F_1}=\abs{F_2}\cos\theta_{34}+\abs{F_3}\cos\theta_{24}+\abs{F_4} \cos\theta_{23}.$$
Moreover, if $c_{ij}=\cos\theta_{ij}$, then:
$$
\begin{vmatrix}
-1 & c_{34} & c_{24} & c_{23} \\ 
c_{34} & -1 & c_{14} & c_{13} \\ 
c_{24} & c_{14} & -1 & c_{12} \\ 
c_{23} & c_{13} & c_{12} & -1
\end{vmatrix}=0,
$$
and the area vector $(\abs{F_1}, \abs{F_2}, \abs{F_3}, \abs{F_4})^T$
is in the null space of the matrix corresponding to this determinant.
\end{lemma}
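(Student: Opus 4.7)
The plan is to prove both the projection identity and the matrix claim via a single vectorial fact: for any closed polyhedral surface, the outward area vectors sum to zero, so if $\vec{n}_i$ denotes the outward unit normal to $F_i$, then
$$\sum_{i=1}^{4} |F_i|\,\vec{n}_i = \vec{0}.$$
This is immediate from the divergence theorem applied to a constant field, or equivalently from the fact that the boundary of a convex body has vanishing ``vector area.'' I would state this as the starting point without extensive justification.

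Next I would translate dot products of the normals into cosines of dihedral angles. For a convex tetrahedron, the outward normals to the two faces meeting along an edge subtend an angle equal to $\pi$ minus the interior dihedral angle along that edge, so
$$\vec{n}_i\cdot\vec{n}_j = -\cos\theta_{kl}\quad\text{where }\{i,j,k,l\}=\{1,2,3,4\}.$$
Dotting the vanishing vector area identity with $\vec{n}_1$ and using $\vec{n}_1\cdot\vec{n}_1=1$ yields
$$|F_1| - |F_2|\cos\theta_{34} - |F_3|\cos\theta_{24} - |F_4|\cos\theta_{23} = 0,$$
which is the first displayed identity. Geometrically this is exactly ``the area of $F_1$ equals the signed sum of the orthogonal projections of the other three faces onto the plane of $F_1$,'' matching the interpretation the authors emphasize.

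For the determinant claim, I would repeat the same dotting procedure with each of $\vec{n}_1,\vec{n}_2,\vec{n}_3,\vec{n}_4$ in turn to obtain four linear relations in $(|F_1|,|F_2|,|F_3|,|F_4|)$. Writing $c_{ij}=\cos\theta_{ij}$ and assembling these four equations into a single matrix equation produces exactly
$$
\begin{pmatrix}
-1 & c_{34} & c_{24} & c_{23} \\
c_{34} & -1 & c_{14} & c_{13} \\
c_{24} & c_{14} & -1 & c_{12} \\
c_{23} & c_{13} & c_{12} & -1
\end{pmatrix}
\begin{pmatrix}|F_1|\\ |F_2|\\ |F_3|\\ |F_4|\end{pmatrix} = \vec{0}.
$$
Since the area vector is nonzero, it lies in the kernel of the displayed matrix, which therefore has vanishing determinant; this gives both remaining conclusions simultaneously.

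The only genuinely subtle point is the sign convention $\vec{n}_i\cdot\vec{n}_j = -\cos\theta_{kl}$. The potential pitfall is confusing the interior dihedral angle $\theta_{kl}\in(0,\pi)$ with the exterior angle between outward normals, which would flip the sign and give the wrong identity. I would handle this by drawing a two-dimensional cross-section transverse to the shared edge and checking the orientation of the two outward normals explicitly; once that is in place, the rest is routine linear algebra.
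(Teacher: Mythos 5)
Your proposal is correct. Note that the paper offers no proof of this lemma at all: it is listed among the ``previously known and useful facts'' (Lemmas 2.2--2.5) and simply stated, with only the informal remark that $\abs{F_1}$ equals the sum of the projected areas of the other faces. So there is no argument in the paper to compare against; your vector-area proof is the standard one and is exactly the rigorous version of the projection interpretation the authors give. The key identity $\sum_i \abs{F_i}\,\vec{n}_i=\vec{0}$, the sign convention $\vec{n}_i\cdot\vec{n}_j=-\cos\theta_{kl}$ for outward normals and interior dihedral angles (which you correctly flag as the one delicate point), and the dotting with each $\vec{n}_i$ in turn all check out, and they yield the four linear relations whose coefficient matrix is the negative of the one displayed in the lemma --- which has the same kernel, hence the same vanishing determinant. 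The only cosmetic remark is that you might say explicitly that your system is $-M$ times the area vector, where $M$ is the displayed matrix, so that the null-space and determinant claims for $M$ follow because $\ker(-M)=\ker(M)$ and the area vector is nonzero.
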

 The following lemmas from Wirth and Dreiding {\cite{Wir}} impose constraints on the dihedral angles and edge lengths of a tetrahedron.
\begin{lemma}[{\cite[Lemmas 1 and 3]{Wir}}]
\label{lem:dihedralineq}
$$\Omega_1=\theta_{12}+\theta_{13}+\theta_{14}-\pi>0,$$
where $\Omega_1$ is the solid angle at the vertex $V_1$ and
$$\theta_{13} + \theta_{14} + \theta_{24} + \theta_{23} < 2\pi.$$
\end{lemma}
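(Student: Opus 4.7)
My plan is to prove the two inequalities separately by spherical-geometry arguments: the first via Girard's spherical-excess formula, the second by dualizing to the outward face normals and invoking the classical result that a sufficiently short closed curve on $S^2$ must lie in an open hemisphere.

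For the first inequality, I would identify $\Omega_1$ as the area of the spherical triangle $\Delta$ that the three faces $F_2,F_3,F_4$ at $V_1$ cut on the unit sphere centered at $V_1$. The three vertices of $\Delta$ are the unit directions along $e_{12},e_{13},e_{14}$; each side of $\Delta$ is an arc of the great circle in which one of the faces meets the sphere; and the interior angle of $\Delta$ at the vertex corresponding to $e_{1j}$ equals the angle between the two face planes meeting along $e_{1j}$, which is exactly $\theta_{1j}$. Girard's formula then gives $\Omega_1=\theta_{12}+\theta_{13}+\theta_{14}-\pi$, and nondegeneracy of the tetrahedron forces $\Delta$ to be nondegenerate, so $\Omega_1>0$.

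For the second inequality I would pass to the polar picture of outward unit face normals $\mathbf{n}_1,\mathbf{n}_2,\mathbf{n}_3,\mathbf{n}_4$. The identity $\cos\theta_{ij}=-\mathbf{n}_k\cdot\mathbf{n}_l$ for $\set{i,j,k,l}=\set{1,2,3,4}$ shows that the spherical distance between $\mathbf{n}_k$ and $\mathbf{n}_l$ equals $\pi-\theta_{ij}$. Form the closed four-arc curve $\gamma\colon \mathbf{n}_1\to\mathbf{n}_4\to\mathbf{n}_2\to\mathbf{n}_3\to\mathbf{n}_1$ on $S^2$ built from minor great-circle arcs; the successive arcs correspond respectively to the edges $e_{23},e_{13},e_{14},e_{24}$, so the total length of $\gamma$ is $4\pi-(\theta_{13}+\theta_{14}+\theta_{23}+\theta_{24})$. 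The Minkowski relation $\sum_i\abs{F_i}\mathbf{n}_i=0$ with $\abs{F_i}>0$ places the origin in the positive convex hull of the $\mathbf{n}_i$, so the four normals lie in no open hemisphere. By the classical theorem that a closed curve on $S^2$ of length less than $2\pi$ is contained in some open hemisphere, the length of $\gamma$ is at least $2\pi$, which yields $\theta_{13}+\theta_{14}+\theta_{23}+\theta_{24}\le 2\pi$. Strict inequality follows because equality would pin $\gamma$ onto a single great circle, forcing the four normals to be coplanar and hence making the tetrahedron degenerate.

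The main obstacle will be the equality-case step in the second part: ruling out length exactly $2\pi$. This requires checking carefully that when a polygonal closed curve on $S^2$ which does not lie in any open hemisphere attains length exactly $2\pi$, its vertices must all lie on one great circle, and then invoking the fact that coplanar outward face normals characterize a degenerate (unbounded prismatic) polytope rather than an honest tetrahedron.
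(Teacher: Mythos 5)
Your argument is sound, but note first that the paper does not actually prove this lemma: it is quoted directly from Wirth and Dreiding \cite{Wir} (their Lemmas 1 and 3), so you are supplying a self-contained proof where the paper supplies only a citation. Your first half is the classical argument (Girard's spherical excess applied to the link of $V_1$, whose interior angle at the direction of $e_{1j}$ is exactly $\theta_{1j}$), and it is complete as stated. Your second half, via the polar quadrilateral of outward normals and the hemisphere lemma, is also correct in strategy, and the bookkeeping checks out: since the edge between $F_k$ and $F_l$ is $e_{ij}$, one has $\cos\theta_{ij}=-\mathbf{n}_k\cdot\mathbf{n}_l$, the four sides of $\mathbf{n}_1\to\mathbf{n}_4\to\mathbf{n}_2\to\mathbf{n}_3\to\mathbf{n}_1$ have lengths $\pi-\theta_{23}$, $\pi-\theta_{13}$, $\pi-\theta_{14}$, $\pi-\theta_{24}$, and the Minkowski relation $\sum_i\abs{F_i}\mathbf{n}_i=0$ rules out an open hemisphere, giving perimeter at least $2\pi$.

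The one imprecision is your equality case. A closed curve of length exactly $2\pi$ not contained in any open hemisphere need not lie on a single great circle: the general equality configuration is a pair of great semicircles joining two antipodal points $p$ and $-p$, and this case must be excluded separately. Fortunately it is easy to exclude for your polygon: a geodesic edge joining two vertices neither of which is $\pm p$ cannot pass through $-p$ (the plane spanned by those two vertices does not contain the axis), so for the quadrilateral to trace two distinct semicircles the points $\pm p$ would have to be vertices, i.e.\ two of the normals would be antipodal. That would make two faces of the tetrahedron parallel, which is impossible since every pair of faces shares an edge. Combined with your observation that coplanar normals contradict nondegeneracy (the outward normals of a genuine tetrahedron span $\R^3$), this closes the gap and yields the strict inequality.
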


\begin{lemma}[{\cite[Lemma 4 and Rmk. 3]{Wir}}]
\label{lem:edgecond}
A sextuple $(d_{12}, d_{13}, d_{14}, d_{23}, d_{24}, d_{34})$
provides the edge lengths of a tetrahedron if and only if
\begin{enumerate}
\item For any distinct $i,j,k \in \set{1,2,3,4}$,
the triangle inequality $d_{ij} < d_{jk}+d_{ik}$ holds,
where we interpret $d_{ij}=d_{ji}$, and
\item the \emph{Cayley-Menger determinant} is positive:
$$
D = \begin{vmatrix}
0 & d_{12}^2 & d_{13}^2 & d_{14}^2 & 1 \\ 
d_{12}^2 & 0 & d_{23}^2 & d_{24}^2 & 1 \\ 
d_{13}^2 & d_{23}^2 & 0 & d_{34}^2 & 1 \\ 
d_{14}^2 & d_{24}^2 & d_{34}^2 & 0 & 1 \\
1 & 1 & 1 & 1 & 0
\end{vmatrix}
> 0.
$$
\end{enumerate}
Moreover, the volume $V$ of the tetrahedron can be written
in terms of this determinant as
$$D=288V^2.$$
\end{lemma}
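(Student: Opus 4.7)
The plan is to establish necessity and sufficiency separately, with the identity $D=288V^2$ playing the central role in both.

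For necessity, condition (1) is immediate: each face of the tetrahedron is a genuine triangle on the vertices $V_i,V_j,V_k$, so the sides must satisfy the triangle inequality. For condition (2), I would place the tetrahedron in coordinates, setting $V_1$ at the origin and letting $v_i = V_i - V_1$ for $i=2,3,4$. The volume is $V = \tfrac{1}{6}|\det[v_2\ v_3\ v_4]|$, so $36V^2 = \det G$ where $G$ is the Gram matrix with entries $v_i\cdot v_j = \tfrac12(d_{1i}^2+d_{1j}^2-d_{ij}^2)$. A direct row/column manipulation of the $5\times 5$ Cayley–Menger determinant — subtracting the last row from the first, expanding along the last column, and cleaning up via the polarization identity above — should express $D$ as a fixed multiple of $\det G$, yielding the identity $D = 288 V^2$. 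Since the tetrahedron is nondegenerate, $V>0$ and hence $D>0$.

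For sufficiency, I would build the tetrahedron explicitly. Using the triangle inequality on $d_{12}, d_{13}, d_{23}$ (a special case of (1)), place $V_1,V_2,V_3$ as a nondegenerate triangle in the plane $z=0$ of $\mathbb{R}^3$. I then need to produce $V_4 = (x,y,z)$ with the prescribed distances $d_{14}, d_{24}, d_{34}$ to the base vertices. Subtracting the sphere equations pairwise gives two affine equations in $x,y$ whose coefficient matrix is nonsingular (the base triangle is nondegenerate), so $x$ and $y$ are uniquely determined; then $z^2 = d_{14}^2 - x^2 - y^2$ must be nonnegative for $V_4$ to exist, and positive for the tetrahedron to be nondegenerate. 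The key point is to show that the resulting $z^2$ equals, up to a positive factor depending on the base triangle's area, the Cayley–Menger determinant of the full sextuple; the identity $D=288V^2 = 288\cdot(\tfrac13 \cdot \text{base area}\cdot z/2)^2$ (rearranged) makes this explicit. Thus the hypothesis $D>0$ guarantees $z^2>0$, producing a genuine tetrahedron with the prescribed edge lengths.

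The main obstacle is the bookkeeping in the determinant identity $D = 288V^2$: the algebraic reduction of the $5\times 5$ bordered Cayley–Menger determinant to the $3\times 3$ Gram determinant is elementary but tedious, and one must track signs carefully so that the factor $288 = 8\cdot 36$ emerges with the correct sign in the final identity. Once this identity is in hand, both directions of the equivalence — and the volume formula — follow cleanly, and the triangle-inequality conditions serve only to ensure that the base triangle used in the construction is nondegenerate so that the inverse problem for $V_4$ is well posed.
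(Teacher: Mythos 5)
The paper does not actually prove this lemma: it is imported verbatim from Wirth and Dreiding \cite{Wir} (their Lemma 4 and Remark 3), so there is no in-paper argument to compare yours against. Your sketch is the standard self-contained proof and its skeleton is sound: triangle inequalities are immediate for necessity, the reduction of the bordered $5\times 5$ determinant to the $3\times 3$ Gram determinant gives $D = (-1)^{4}2^{3}(3!)^{2}V^{2} = 288V^{2}$, and the sufficiency direction correctly builds the apex over a nondegenerate base triangle. One step deserves more care than you give it: in the sufficiency direction you cannot simply invoke $D = 288V^2$ to conclude $z^2 > 0$, because that identity is (so far) only established for configurations that are already tetrahedra. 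What you need is the purely algebraic fact that, with $V_1,V_2,V_3$ fixed in the plane and $(x,y)$ determined by the linear system, the Cayley--Menger determinant of the six prescribed numbers equals $32\,(\mathrm{Area})^{2}\,t$ where $t = d_{14}^2 - x^2 - y^2$ is allowed to be any real number; this follows because both sides are polynomials in $t$ (each $d_{i4}^2$ is affine in $t$) that agree for all $t \ge 0$, hence identically. Only then does $D>0$ force $t>0$ and yield a genuine apex. Also, your parenthetical volume formula $V = \tfrac13\cdot(\text{base area})\cdot z/2$ has a stray factor of $2$; the height formula is $V = \tfrac13\cdot(\text{base area})\cdot|z|$, which is what produces the factor $32 = 288/9$ above. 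Neither point breaks the proof, but the polynomial-continuation step is the one place where the argument would be incomplete as written.
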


\begin{lemma}[{\cite[Theorem 1]{Wir}}]
\label{lem:computedihedral}
The dihedral angle $\theta_{ij}$ is given by
$$\cos \theta_{ij}=\frac{D_{ij}}{\sqrt{D_{ijk}D_{ijl}}},$$
where
$$D_{ij}=-d_{ij}^4+
\paren{d_{ik}^2+d_{il}^2+d_{jk}^2+d_{jl}^2-2d_{kl}^2}d_{ij}^2
+\paren{d_{ik}^2-d_{jk}^2}\paren{d_{jl}^2-d_{il}^2}$$
and
$$D_{ijk}=-16\abs{F_l}^2.$$
\end{lemma}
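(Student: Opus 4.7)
The plan is a direct coordinate computation. Choose coordinates so that $V_i$ is at the origin, $V_j=(d_{ij},0,0)$ on the positive $x$-axis, the face $F_l=V_iV_jV_k$ lies in the upper half $xy$-plane with $V_k=(x_k,y_k,0)$, $y_k>0$, and $V_l=(x_l,y_l,z_l)$ with $z_l>0$. In this frame the dihedral angle $\theta_{ij}$ is just the planar angle, measured in the $yz$-plane, between the projections of $\overrightarrow{V_iV_k}$ and $\overrightarrow{V_iV_l}$, so
\[
\cos\theta_{ij}=\frac{y_l}{\sqrt{y_l^2+z_l^2}}.
\]
Since $y_k$ equals the altitude from $V_k$ to $e_{ij}$ inside $F_l$ and $\sqrt{y_l^2+z_l^2}$ equals the altitude from $V_l$ to $e_{ij}$ inside $F_k$, one has $y_k\,d_{ij}=2\abs{F_l}$ and $\sqrt{y_l^2+z_l^2}\,d_{ij}=2\abs{F_k}$. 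Multiplying numerator and denominator by $y_k$ therefore gives
\[
\cos\theta_{ij}=\frac{y_ky_l\,d_{ij}^2}{4\abs{F_k}\abs{F_l}}.
\]

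Next I would express $y_ky_l$ in terms of edge lengths. The squared-distance identities $\abs{V_k}^2=d_{ik}^2$, $\abs{V_k-V_j}^2=d_{jk}^2$, and the analogous pair for $V_l$, yield
\[
x_k=\frac{d_{ij}^2+d_{ik}^2-d_{jk}^2}{2d_{ij}},\qquad x_l=\frac{d_{ij}^2+d_{il}^2-d_{jl}^2}{2d_{ij}},
\]
and then $\abs{V_k-V_l}^2=d_{kl}^2$ expands to $2y_ky_l=d_{ik}^2+d_{il}^2-d_{kl}^2-2x_kx_l$. Clearing denominators by $4d_{ij}^2$ and expanding the product $(d_{ij}^2+d_{ik}^2-d_{jk}^2)(d_{ij}^2+d_{il}^2-d_{jl}^2)$ turns the right-hand side into
\[
-d_{ij}^4+d_{ij}^2(d_{ik}^2+d_{il}^2+d_{jk}^2+d_{jl}^2-2d_{kl}^2)+(d_{ik}^2-d_{jk}^2)(d_{jl}^2-d_{il}^2),
\]
which is precisely $D_{ij}$. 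Combined with the previous display, this gives $\cos\theta_{ij}=D_{ij}/(16\abs{F_k}\abs{F_l})$.

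It remains to identify the denominator with $\sqrt{D_{ijk}D_{ijl}}$. But the assertion $D_{ijk}=-16\abs{F_l}^2$ is exactly Heron's formula in the form $16A^2=2a^2b^2+2b^2c^2+2c^2a^2-a^4-b^4-c^4$ applied to the triangle $V_iV_jV_k=F_l$ with side lengths $d_{ij},d_{ik},d_{jk}$, together with the sign convention that defines $D_{ijk}$ as the negative of this polynomial. The symmetric identity $D_{ijl}=-16\abs{F_k}^2$ then yields $\sqrt{D_{ijk}D_{ijl}}=16\abs{F_k}\abs{F_l}$, matching the claimed formula. There is no real conceptual obstacle; the only nontrivial step is the bookkeeping in the algebraic expansion, which works out because the cross terms telescope cleanly into the product $(d_{ik}^2-d_{jk}^2)(d_{jl}^2-d_{il}^2)$.
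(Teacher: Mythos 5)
Your proof is correct, but note that the paper does not prove this lemma at all: it is imported verbatim from Wirth and Dreiding (cited as Theorem 1 of [WD2]) and listed among the "previously known facts" of Section 2, so there is no internal argument to compare against. Your coordinate derivation is a legitimate self-contained replacement. The key steps all check out: with $V_i$ at the origin, $V_j$ on the $x$-axis, and $F_l$ in the $xy$-plane, the dihedral angle along $e_{ij}$ is indeed $\arccos\bigl(y_l/\sqrt{y_l^2+z_l^2}\bigr)$ (valid also when $y_l<0$, i.e.\ for obtuse dihedral angles), and the identities $y_k d_{ij}=2\abs{F_l}$, $\sqrt{y_l^2+z_l^2}\,d_{ij}=2\abs{F_k}$ convert this to $y_ky_l d_{ij}^2/(4\abs{F_k}\abs{F_l})$. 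The algebra then gives $4y_ky_l d_{ij}^2 = -d_{ij}^4 + d_{ij}^2(d_{ik}^2+d_{il}^2+d_{jk}^2+d_{jl}^2-2d_{kl}^2)-(d_{ik}^2-d_{jk}^2)(d_{il}^2-d_{jl}^2)$, and the sign flip on the last factor reproduces $D_{ij}$ exactly, so $\cos\theta_{ij}=D_{ij}/(16\abs{F_k}\abs{F_l})$. One small remark: as the lemma is stated in this paper, $D_{ijk}=-16\abs{F_l}^2$ functions as the \emph{definition} of $D_{ijk}$ (the paper never gives the underlying Cayley--Menger-type polynomial), so your appeal to Heron's formula is only needed if one wants to reconcile with Wirth--Dreiding's original polynomial definition; within the paper's formulation the final step $\sqrt{D_{ijk}D_{ijl}}=16\abs{F_k}\abs{F_l}$ is immediate.
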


The next proposition says that the dihedral angles of a tetrahedron determine the tetrahedron.

\begin{prop}
\label{prop:dihedralfixes}
The dihedral angles (order matters) determine the tetrahedron
up to translation, rotation, and scaling.
\end{prop}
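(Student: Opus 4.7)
The plan is to reconstruct the six edge lengths $d_{ij}$ of $T$ from its labeled dihedral angles, up to a single overall scale, and then invoke the standard rigidity fact that edge lengths determine a tetrahedron up to rigid motion. The reconstruction goes via the face angles of $T$: first I would recover all twelve face angles by spherical trigonometry at each vertex, and then I would apply the planar law of sines face by face.

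For the first step, at the vertex $V_1$ intersect $T$ with a small sphere centered at $V_1$. The three edges at $V_1$ meet the sphere in three points $P_2, P_3, P_4$, and the three faces meeting at $V_1$ cut out a spherical triangle $P_2P_3P_4$. A direct inspection shows that the side $P_jP_k$ of this spherical triangle has angular length equal to the face angle $\angle V_jV_1V_k$, while the vertex angle at $P_j$ equals the dihedral angle $\theta_{1j}$ along the edge $e_{1j}$. By the spherical law of cosines for sides in terms of angles,
$$\cos \angle V_3V_1V_4 \;=\; \frac{\cos \theta_{12} + \cos \theta_{13} \cos \theta_{14}}{\sin \theta_{13} \sin \theta_{14}},$$
with analogous formulas for the other two face angles at $V_1$. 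Repeating this at each of the four vertices expresses all twelve face angles of $T$ as explicit functions of the six dihedral angles.

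For the second step, fix the scale by setting $d_{12} = 1$. On faces $F_3 = V_1V_2V_4$ and $F_4 = V_1V_2V_3$, all three interior angles are now known and the shared side $e_{12}$ has known length, so the planar law of sines yields $d_{14}, d_{24}$ and $d_{13}, d_{23}$, respectively. Applying the law of sines to either $F_1 = V_2V_3V_4$ or $F_2 = V_1V_3V_4$ then determines $d_{34}$, and the two values must agree because $T$ actually exists. Hence the six edge lengths are determined by the labeled dihedral angles together with the single scale $d_{12}$, and standard rigidity of a tetrahedron given its edge lengths yields the claim.

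The only genuinely nontrivial ingredient is the spherical-triangle identification at each vertex; once that is in place, the rest is elementary planar trigonometry, and the vertex labeling eliminates any combinatorial ambiguity about which dihedral angle lies at which vertex of which spherical triangle, so the reconstruction is unique.
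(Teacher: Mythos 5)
Your proof is correct, but it takes a genuinely different route from the paper's. The paper argues synthetically with the four supporting planes: the three planes through one vertex, meeting pairwise at the three specified dihedral angles, form a trihedral corner that is rigid up to translation and rotation, and the fourth plane then has a forced orientation, its one remaining translational degree of freedom being exactly the overall scale. That is a three-line argument with no trigonometry. You instead reconstruct the edge lengths explicitly: the vertex-figure observation that the spherical triangle cut out at $V_i$ has vertex angles $\theta_{ij}$ and sides equal to the face angles, followed by the dual spherical law of cosines and the planar law of sines (your displayed formula is the correct one). This is more work, but it buys an explicit algorithm for recovering edge lengths from dihedral angles --- which is essentially the content of Lemma \ref{lem:lengthfromangle}, where the paper obtains the same data by a different computation (the face-area vector from the null space of the cosine matrix in Lemma \ref{lem:dihedralrelation}, combined with the projection ratios of Lemma \ref{lem:projratioedgelen}). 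One caveat, shared by both arguments: reducing to ``edge lengths determine the tetrahedron'' (or to the rigidity of a trihedral corner) only gives uniqueness up to a possibly orientation-reversing isometry, since a labeled tetrahedron and its labeled mirror image have identical ordered dihedral angles and edge lengths; the paper's convention, stated after the proposition in Section \ref{sec:2pi/n}, is to absorb reflection into a relabeling of vertices, and your proof is no worse off on this point.
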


\begin{proof}
A tetrahedron is made up of four planes. Specifying the dihedral
angles means that the angles between all pairs of planes are specified.

Consider the incomplete tetrahedron with three planes intersecting at
a vertex at three specified dihedral angles. This configuration is uniquely
determined up to translation and rotation.
Then, the fourth plane has a fixed orientation with respect
to the incomplete tetrahedron, and its translation corresponds
to scaling the tetrahedron. Hence, the tetrahedron is fixed
up to translation, rotation and scaling.
\end{proof}

\section{Previous Literature}
\label{sec:prevtiles}
Section \ref{sec:prevtiles} summarizes previous results on tetrahedral tiles.

\subsection*{Sommerville}

Sommerville \cite{Solist} provided a list of every orientation-preserving
face-to-face tetrahedral tile, which was proven to be complete by Edmonds \cite{Ed}. 
In contrast, our definition of space-filling also includes non-orientation-preserving tiles. The tetrahedra Sommerville dismissed because of being non-orientation preserving are considered in this paper, but they are not labeled by Sommerville's nomenclature. A full description of the types considered in this paper is given in Section \ref{sec:classification}.

Out of eleven total candidates, Sommerville proved four of them, named Sommerville No. 1--4 by Goldberg \cite{Gol}, tiled Euclidean space in an orientation-preserving manner. Gallagher \emph{et al.} \cite{Gal} computed that the surface-area minimizing tetrahedron among these is the Sommerville No. 1 (Figure \ref{fig:sommerville1}), which has four congruent isosceles sides with two edges of length $\sqrt{3}$ and one edge of length $2$. It can be obtained by slicing a triangular prism into three congruent tetrahedra
(Figure \ref{fig:sommerville1}).

Edmonds pointed out that the Sommerville (xi) tetrahedron 
(with $d_{12}=d_{23}=d_{34}=p$ and $d_{13}=d_{14}=d_{24}=q$) 
was neglected in Sommerville's original paper, but proved that it sometimes tiles face-to-face,
although in a non-orientation-preserving manner.
We present this result as Proposition \ref{prop:somxi_tile}, which is used several times throughout our paper.

\begin{prop}
\label{prop:somxi_tile}
The Sommerville (xi) tetrahedron (with $d_{12}=d_{23}=d_{34}=p$ and $d_{13}=d_{14}=d_{24}=q$) tiles face-to-face
precisely when $p/q=\sqrt{2/3}$ or $\sqrt{3/2}$.
However, it tiles only in a non-orientation-preserving manner.
\end{prop}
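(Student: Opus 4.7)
My plan proceeds by first observing a reduction: the vertex relabeling $\tau$ sending $(V_1,V_2,V_3,V_4)$ to $(V_2,V_4,V_1,V_3)$ interchanges the two edge-length types, so a Sommerville (xi) with ratio $p/q$ is the same tetrahedron as one with ratio $q/p$, just relabeled. Hence the cases $p/q=\sqrt{2/3}$ and $p/q=\sqrt{3/2}$ represent a single congruence class, and it suffices to handle a single ratio.

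For sufficiency, I would exhibit an explicit tiling when $p=\sqrt{2}$ and $q=\sqrt{3}$, following the spirit of constructions from Goldberg \cite{Gol}. The approach is to fit copies of the tetrahedron together with their mirror images into a bounded space-filling block (such as a triangular prism or parallelepiped) and then tile $\R^3$ by translations of that block. I would place the tetrahedron in coordinates adapted to its rotational symmetry $(V_1\leftrightarrow V_4,\ V_2\leftrightarrow V_3)$, glue a copy to its mirror across the isoceles face $V_2V_3V_4$ with sides $\sqrt{2},\sqrt{3},\sqrt{2}$, and iterate reflections across subsequent boundary faces to realize a periodic fundamental domain.

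For necessity, I would use that at each edge of a face-to-face tiling, the dihedral angles of the meeting tetrahedra sum to $2\pi$. The rotational symmetry of the Sommerville (xi) reduces the six dihedral angles to four distinct values: $\alpha=\theta_{12}=\theta_{34}$ and $\beta=\theta_{23}$ on the $p$-edges, and $\gamma=\theta_{13}=\theta_{24}$ and $\delta=\theta_{14}$ on the $q$-edges. Because face-to-face matching requires equal edge lengths, each $p$-edge in the tiling must satisfy $n\alpha+m\beta=2\pi$ and each $q$-edge $k\gamma+\ell\delta=2\pi$ for some nonnegative integers $n,m,k,\ell$. Using Lemma \ref{lem:computedihedral} to express $\alpha,\beta,\gamma,\delta$ as algebraic functions of $r=p/q$, and combining with the determinant identity of Lemma \ref{lem:dihedralrelation} and the bounds of Lemma \ref{lem:dihedralineq}, the admissible integer quadruples are confined to a bounded range; enumerating them and solving the resulting algebraic equations in $r$ should yield exactly $r^2\in\{2/3,\,3/2\}$.

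For the non-orientation-preserving claim, I would invoke Edmonds' theorem \cite{Ed} that Sommerville's list of orientation-preserving face-to-face tetrahedral tilers is complete: since the Sommerville (xi) is absent from that list and yet tiles, it must tile only non-orientation-preservingly. The main obstacle is the necessity step: although the symmetry reduction and the two $2\pi$-sum equations confine the problem to finitely many integer quadruples, verifying that each quadruple either yields $r^2\in\{2/3,\,3/2\}$ or violates a geometric constraint (triangle inequality, Cayley--Menger positivity from Lemma \ref{lem:edgecond}, or the dihedral-angle inequalities) requires careful case analysis, and finding a clean uniform argument across those cases is the delicate part.
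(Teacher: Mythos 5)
First, note that the paper does not actually prove this proposition: its entire proof is the single line ``Proven by Edmonds \cite{Ed}.'' So you are attempting to reprove Edmonds' theorem rather than to reconstruct an argument the paper contains. Your opening reduction (the relabeling $(V_1,V_2,V_3,V_4)\mapsto(V_2,V_4,V_1,V_3)$ swaps the two edge-path length classes, so the two listed ratios name a single congruence class) is correct, and your appeal to Edmonds' completeness theorem for the non-orientation-preserving claim is legitimate. But both substantive directions have genuine gaps. For sufficiency, the mechanism you describe---gluing a copy to its mirror image across the face $V_2V_3V_4$ and then ``iterating reflections across subsequent boundary faces''---cannot work as stated: for $p/q=\sqrt{2/3}$ one computes from Lemma \ref{lem:computedihedral} that $\theta_{12}=\theta_{34}=\arccos(1/\sqrt{3})\approx 54.7^\circ$ and $\theta_{23}=\arccos(1/3)\approx 70.5^\circ$, neither of which is of the form $\pi/n$, so the reflections in the two faces meeting at a $p$-edge generate a non-discrete group and the reflected copies overlap. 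The tiling must instead close up around a $p$-edge via the relation $4\theta_{12}+2\theta_{23}=2\pi$, with \emph{different} dihedral angles of different copies meeting along the same edge; exhibiting such an arrangement globally is exactly the nontrivial content of Edmonds' construction, and you have not supplied it.

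Second, the necessity direction is only a plan. The reduction to two relations $n\alpha+m\beta=2\pi$ and $k\gamma+\ell\delta=2\pi$ in the single parameter $r=p/q$ is in the spirit of the paper's edge-length-graph machinery (Cor.\ \ref{cor:edgegraphlincomb}), though you should quantify carefully: a priori different edges of the same length class could close up with different integer combinations. More seriously, the claim that enumerating the boundedly many integer tuples ``should yield exactly $r^2\in\{2/3,3/2\}$'' is precisely the hard half of the theorem and is left unverified; it is not evident that no other $r$ satisfies some admissible pair of angle-sum relations together with the constraints of Lemmas \ref{lem:dihedralrelation}--\ref{lem:edgecond}. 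You acknowledge this yourself. As it stands the proposal is a sensible outline for reproving \cite{Ed}, but neither the construction nor the elimination is carried out, so it does not yet constitute a proof; the efficient fix is to do what the paper does and cite Edmonds.
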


\begin{proof}
Proven by Edmonds \cite{Ed}.
\end{proof}



\subsection*{Hill and Baumgartner}
Hill and Baumgartner \cite{Hi, Ba1, Ba} both contributed to early work in tetrahedral tilings. We generally do not refer to their tetrahedra by name, since all except Baumgartner's T2 (Hill's second type) are subcases of Sommerville No. 1--4. Baumgartner's T2 is a subcase of Goldberg's second and third infinite families (Prop. \ref{prop:prevtetrahedra}). A summary of Hill's, Baumgartner's, and Sommerville's work is provided by Goldberg \cite{Gol}.

\subsection*{Goldberg}
Following papers of Sommerville and Baumgartner, Goldberg \cite{Gol} presented three infinite families of tetrahedral tiles
(Figs. \ref{fig:goldbergfamily1}--\ref{fig:goldbergfamily3}), which in general need reflections to tile.
Sommerville No. 1 and No. 3 appear as degenerate cases of tiles which are
their own reflections.

\begin{figure}[h!]
\begin{tikzpicture}[scale=1.5,transform shape]
\draw [black] (0,3) node [anchor=south] {\scriptsize 1} -- (-2.4,0) node [anchor=east] {\scriptsize 2};
\node [anchor=south east] at (-1,1.4) {\small $b,\alpha$};
\draw [black] (0,3) -- (0,-1) node [anchor=north] {\scriptsize 3};
\node [anchor=center] at (.2,1.2) {\small $c,\pi/2$};
\draw [black] (0,3) -- (2.4,0) node [anchor=west] {\scriptsize 4};
\node [anchor=south west] at (1,1.4) {\small $a,\pi/3$};
\draw [black] (-2.4,0) -- (0,-1);
\node [anchor=north east] at (-.7,-.5) {\small $b, \pi-2\alpha$};
\draw [dashed, black] (-2.4,0) -- (2.4,0);
\node [anchor=south] at (.9,-.2) {\small $c,\pi/2$};
\draw [black] (0,-1) -- (2.4,0);
\node [anchor=north west] at (.8,-.5) {\small $b,\alpha$};
\end{tikzpicture}
\begin{align*}
a^2/3+b^2&=c^2 \\
\sin\alpha &=b/c
\end{align*}
\caption{Goldberg's first family labeled with edge lengths and dihedral angles.}
\label{fig:goldbergfamily1}
\end{figure}

\begin{figure}[h!]
\begin{tikzpicture}[scale=1.5,transform shape]
\draw [black] (0,3) node [anchor=south] {\scriptsize 1} -- (-2.4,0) node [anchor=east] {\scriptsize 2};
\node [anchor=south east] at (-1,1.4) {\small $b,\alpha$};
\draw [black] (0,3) -- (0,-1) node [anchor=north] {\scriptsize 3};
\node [anchor=center] at (.2,1.2) {\small $c,\pi/2$};
\draw [black] (0,3) -- (2.4,0) node [anchor=west] {\scriptsize 4};
\node [anchor=south west] at (1,1.4) {\small $a/2,\pi/3$};
\draw [black] (-2.4,0) -- (0,-1);
\node [anchor=north east] at (-.7,-.5) {\small $b, \pi/2-\alpha$};
\draw [dashed, black] (-2.4,0) -- (2.4,0);
\node [anchor=south] at (1,-.2) {\small $d, \pi/2 + \beta$};
\draw [black] (0,-1) -- (2.4,0);
\node [anchor=north west] at (.8,-.5) {\small $d,\pi/2-\beta$};
\end{tikzpicture}
\begin{align*}
a^2/12+d^2 &=b^2\\
a^2/3+b^2&=c^2\\
\sin\alpha&=b/c\\
\sin\beta &= a/(2\sqrt{3}b)
\end{align*}
\caption{Goldberg's second family labeled with edge lengths and dihedral angles.}
\label{fig:goldbergfamily2}
\end{figure}

\begin{figure}[h!]
\begin{tikzpicture}[scale=1.5,transform shape]
\draw [black] (0,3) node [anchor=south] {\scriptsize 1} -- (-2.4,0) node [anchor=east] {\scriptsize 2};
\node [anchor=south east] at (-1,1.4) {\small $b,\alpha$};
\draw [black] (0,3) -- (0,-1) node [anchor=north] {\scriptsize 3};
\node [anchor=center] at (.2,1.2) {\small $a,\pi/6$};
\draw [black] (0,3) -- (2.4,0) node [anchor=west] {\scriptsize 4};
\node [anchor=south west] at (1,1.4) {\small $f, \pi/2 + \gamma$};
\draw [black] (-2.4,0) -- (0,-1);
\node [anchor=north east] at (-.7,-.5) {\small $c, \pi/2$};
\draw [dashed, black] (-2.4,0) -- (2.4,0);
\node [anchor=south] at (1.1,-.2) {\small $b/2,\pi-2\alpha$};
\draw [black] (0,-1) -- (2.4,0);
\node [anchor=north west] at (.8,-.5) {\small $f,\pi/2-\gamma$};
\end{tikzpicture}
\begin{align*}
b^2+2c^2&=4f^2\\
a^2/3+b^2&=c^2\\
\sin\alpha&=b/c\\
\sin\beta &= a/(2\sqrt{3}b)\\
\sin\gamma &=a/(3c)
\end{align*}
\caption{Goldberg's third family labeled with edge lengths and dihedral angles.}
\label{fig:goldbergfamily3}
\end{figure}

Goldberg also considered non-face-to-face tiles.
In fact, Goldberg's first and second families tile face-to-face
while Goldberg's third family tiles only non-face-to-face in general.

\begin{prop}
\label{prop:goldberg12tile}
Goldberg's first and second tiling families can be face-to-face
but not necessarily orientation-preserving.
\end{prop}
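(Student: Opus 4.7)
The plan is to give an explicit face-to-face tiling for each family (with reflections allowed), generalizing the prism construction of the Sommerville No.~1 shown in Figure \ref{fig:sommerville1}. Specifically, I will show that three copies of each Goldberg tetrahedron (in general, some reflected) can be assembled into a triangular prism, and then invoke the standard fact that congruent triangular prisms tile $\R^3$ face-to-face.

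For Goldberg's first family, I set up coordinates for an equilateral triangular prism of base side $a$ and axial length $2b$, and cut it by two diagonal planes chosen to pass through pairs of edges of the prism. I then verify that the three resulting pieces are congruent tetrahedra (up to reflection) matching the edge lengths and dihedral angles given in Figure \ref{fig:goldbergfamily1}: direct computation inside the prism yields $a^2/3+b^2=c^2$ by Pythagoras applied to the diagonal of length $c$, $\sin\alpha = b/c$ from an elementary right-triangle relation, and the sextuple of dihedral angles $\pi/2,\pi/2,\pi/3,\alpha,\alpha,\pi-2\alpha$ by computing angles between the cutting planes and the prism faces. By Proposition \ref{prop:dihedralfixes}, matching these dihedral angles certifies congruence of the three pieces up to scaling. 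The Sommerville No.~1 corresponds to the degenerate case $\alpha=\pi/3$, where the tetrahedron is its own mirror image; for generic $\alpha$, a comparison of opposite edges (e.g.\ one of length $b$ opposite one of length $c$) shows the tetrahedron is chiral, so the prism assembly genuinely uses reflection.

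For Goldberg's second family, I perform the analogous construction starting from a triangular prism with an \emph{isosceles} (non-equilateral) base, where the two equal sides are determined by $d$ so that $a^2/12 + d^2 = b^2$. The same cutting procedure, with the two diagonal planes chosen to pass through the lateral edges of the prism, partitions it into three tetrahedra matching Figure \ref{fig:goldbergfamily2}. The additional parameter $\beta$ with $\sin\beta = a/(2\sqrt{3}b)$ appears as the angle by which the new base is tilted relative to the equilateral case.

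The main obstacle is ensuring face-to-face compatibility: the cutting planes must be chosen so that any two adjacent pieces inside a prism meet along a full triangular face rather than along portions of faces, and when prisms are stacked, the tetrahedral faces on either side of a prism boundary must match exactly. Getting the cutting planes right---they must each be spanned by a pair of full edges of the prism so that their intersections with the prism faces are full edges of the resulting tetrahedra---is the key combinatorial step. Once this incidence structure is in place, the metric verifications (edge-length identities via Pythagoras and dihedral angles via Lemma \ref{lem:computedihedral}) are routine trigonometry, and the stacked prisms inherit the face-to-face property automatically.
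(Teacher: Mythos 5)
Your central construction fails at its first step: three copies of a general first-family tetrahedron cannot be assembled into a compact triangular prism. In any face-to-face tiling of a prism by only three tetrahedra, at least one of the two triangular end-faces cannot be subdivided and must therefore be a single face of a single tetrahedron; for your equilateral prism that tetrahedron would need an equilateral face. But the first-family tetrahedron's faces have edge multisets $(b,c,b)$, $(b,c,b)$, $(a,b,c)$, $(a,b,c)$, and an equilateral face would force $b=c$, which under $c^2=a^2/3+b^2$ gives $a=0$. (Reflections do not help, since they preserve face shapes.) The finite three-copy prism of Figure \ref{fig:sommerville1} is a special feature of the Sommerville No.~1 ($\alpha=\pi/3$) and does not generalize. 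The paper's proof, following Goldberg, instead stacks copies helically inside an \emph{arbitrarily long} equilateral prism, with consecutive tetrahedra related by a screw motion; the edge of length $a$ runs along a lateral edge of the prism and spans three levels of the helix, which is the source of the factor $3$ in $a=3a'$ and in $c^2=a^2/3+b^2$. No finite flat-ended prism is filled by finitely many copies.

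The step you defer to the end --- ``the stacked prisms inherit the face-to-face property automatically'' --- is also false as stated, and it is exactly where the proposition's content lies. Each lateral face of the long prism is triangulated by a chiral zigzag strip of congruent triangles, so two prisms sharing a lateral face can match up only if one is the mirror image of the other. The paper's proof makes this explicit and then exhibits the alternating arrangement of the prism $A$ and its reflection $A'$ around each prism edge (Figure \ref{fig:goldbergtile}); this is precisely why the resulting tiling is face-to-face but not orientation-preserving in general. Finally, your treatment of the second family (a prism with isosceles base) is not what is needed: the paper obtains the second family by bisecting each first-family tetrahedron inside the \emph{same} equilateral prism (Figure \ref{fig:goldberg2ndderive}) and checking that the bisection refines the lateral-face triangulation compatibly, so that the same prism-plus-reflection arrangement still works.
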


\begin{proof}
Consider Goldberg's first family.
Goldberg derives it by dissecting an equilateral triangular prism into tetrahedra that are congruent without reflection as in Figure \ref{fig:goldbergslice}. The tetrahedra may be stacked into an arbitrarily long prism.
Each face of the prism is composed of triangles with side lengths
$3a$, $b$ and $c$, and all faces are congruent without reflection,
as illustrated in Figure \ref{fig:goldbergtile}.

\begin{figure}[ht]
\includegraphics[width = 0.5\linewidth]{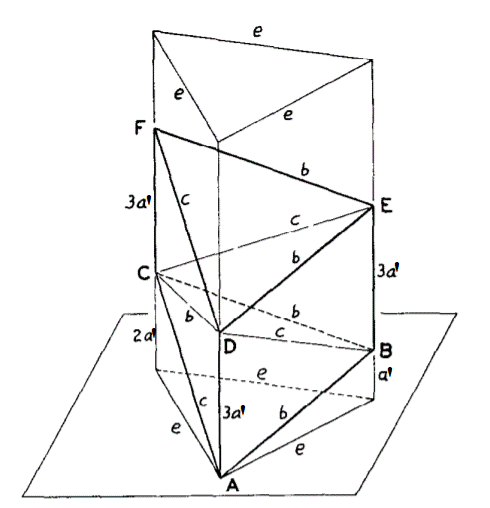}
\caption{Derivation of Goldberg's first family \cite{Gol}. We have primed $a$ in Goldberg's image; in our paper, $a=3a'$. }
\label{fig:goldbergslice}
\end{figure}

\begin{figure}
\centering
\begin{subfigure}{.5\textwidth}
\begin{tikzpicture}[scale=1.3,transform shape]
\draw [black] (-2.4,2) -- (-2.4,-2);
\draw [black] (-0.5, 2) -- (-0.5, -2);
\draw [black, dashed] (-2.4, 2) -- (-0.5, 2);
\draw [black, dashed] (-2.4, -2) -- (-0.5, -2);
\draw [red, dashed] (-2.4, 2) -- (-0.5, 1);
\draw [red, dashed] (-2.4, -1) -- (-0.5, 1);
\draw [red, dashed] (-2.4, -1) -- (-0.5, -2);

\draw [black] (2.4,2) -- (2.4,-2);
\draw [black] (0.5, 2) -- (0.5, -2);
\draw [black, dashed, thick] (2.4, 2) -- (0.5, 2);
\draw [black, dashed, thick] (2.4, -2) -- (0.5, -2);
\draw [red, dashed] (2.4, 2) -- (0.5, 1);
\draw [red, dashed] (2.4, -1) -- (0.5, 1);
\draw [red, dashed] (2.4, -1) -- (0.5, -2);

\draw [->, thick] (-1.45, 2) to (-1.45, 2.3);
\draw [->, thick] (-1.45, -2) to (-1.45, -2.3);
\draw [->, thick] (1.45, 2) to (1.45, 2.3);
\draw [->, thick] (1.45, -2) to (1.45, -2.3);

\end{tikzpicture}
   \centering
   \caption{}
   \label{fig:goldbergtile_sub1}
\end{subfigure}
\begin{subfigure}{.4\textwidth}
\begin{tikzpicture}[scale=2.5,transform shape]
\draw [black] (0,0) -- (1,0);
\draw [black] (0,0) -- (0.5, 0.866);
\draw [black] (1,0) -- (0.5, 0.866);

\draw [black] (0,0) -- (0.5, -0.866);
\draw [black] (0,0) -- (-0.5,-0.866);
\draw [black] (-0.5, -0.866) -- (0.5, -0.866);

\draw [black] (0,0) -- (-0.5,0.866);
\draw [black] (0,0) -- (-1,0);
\draw [black] (-0.5,0.866) -- (-1,0);

\draw [black] (0.5, 0.866) -- (-0.5,0.866);
\draw [black] (-1,0) -- (-0.5, -0.866);
\draw [black] (0.5, -0.866) -- (1,0);

\node [anchor=center] at (0, 0.5){\tiny $A'$};
\node [anchor=center] at (0, -0.5){\tiny $A$};
\node [anchor=center] at (0.5, 0.3){\tiny $A$};
\node [anchor=center] at (0.5, -0.3){\tiny $A'$};
\node [anchor=center] at (-0.5, 0.3){\tiny $A$};
\node [anchor=center] at (-0.5, -0.3){\tiny $A'$};
\end{tikzpicture}
  \centering
  \caption{}
  \label{fig:goldbergtile_sub2}
\end{subfigure}
\caption{(\ref{fig:goldbergtile_sub1}): The rectangular section of the face of a prism sliced into tetrahedra that are congruent without reflection (left). In order for the tetrahedra within to tile face-to-face, a reflection of the prism (right) must be used. (\ref{fig:goldbergtile_sub2}): Tiling of Goldberg prisms. $A$ denotes the original prism, and $A'$ is the mirror image. The prism and its reflection tile space face-to-face, proving that Goldberg's first family tiles face-to-face.}
\label{fig:goldbergtile}
\end{figure}

Since the prism has an equilateral triangular cross section, it can tile space.
In order for the tetrahedra within to tile face-to-face, a reflection of the prism must also be used (except for cases where the tetrahedron does not have a distinct reflection), as illustrated in Figure \ref{fig:goldbergtile}.
Then the prism and its reflection can be arranged as in Figure \ref{fig:goldbergtile_sub2} to tile space, and the tetrahedra within tile face-to-face.

Goldberg's second family is derived from slicing
the tetrahedra in Goldberg's first family in half, as in Figure
\ref{fig:goldberg2ndderive}.
To show that it can tile face-to-face,
notice that this slicing cuts the triangles on the face of the prism
in half in such a way that the prism and its reflection can still
match up face-to-face as in Figure \ref{fig:goldbergtile}.

\begin{figure}[ht]
\begin{tikzpicture}[scale=1.5,transform shape]
\draw [black] (0,3) node [anchor=south] {\scriptsize C} -- (-2.6,0.6) node [anchor=east] {\scriptsize A};
\draw [black] (0,3) -- (-0.8,-1) node [anchor=north] {\scriptsize B};
\draw [black] (0,3) -- (2.2,-0.2) node [anchor=west] {\scriptsize D};
\draw [black] (-2.6,0.6) -- (-0.8,-1);
\draw [dashed, black] (-2.6,0.6) -- (2.2,-0.2);
\draw [black] (-0.8,-1) -- (2.2,-0.2);
\draw [dashed, black] (0,3) -- (-0.2,0.1);
\draw [dashed, black] (-0.8,-1) -- (-0.2,0.1);
\node [anchor=north west] at (-0.3,0.2) {\scriptsize G};
\end{tikzpicture}
\caption{Goldberg's second family is derived by slicing a tetrahedron $ABCD$ of the first family into congruent halves $ACBG$ and $BCDG$. Vertex labeling corresponds to Goldberg's own labeling (Figure \ref{fig:goldbergslice}).}
\label{fig:goldberg2ndderive}
\end{figure}

\end{proof}

\begin{remark}
Note that by reflecting the prism as in Figure \ref{fig:goldbergtile}, the tetrahedra within are also reflected, and the tile is no longer orientation-preserving in general.
\end{remark}

\begin{prop}
Goldberg's third family does not tile face-to-face in general.
\end{prop}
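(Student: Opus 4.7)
The plan is to exploit the uniqueness of certain edge lengths in Goldberg's third family to show that the face-to-face condition imposes a discrete arithmetic constraint on the dihedral angle $\alpha$, whereas the family itself is a continuous one-parameter family up to similarity. Hence for all but finitely many parameter values, face-to-face tiling is impossible.

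First I would examine the edge-length multiset $\{a,b,b/2,c,f,f\}$ of a tile in the third family. Using the defining relations $b^2+2c^2=4f^2$ and $a^2/3+b^2=c^2$, one checks that away from a measure-zero subset of the parameter space the only coincidence among these six lengths is the pair of $f$-edges. In particular, the edge $e_{12}$ of length $b$ is the unique edge of its length in the tile, and the same holds for $e_{24}$ of length $b/2$.

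The main step is then the following. In any face-to-face tiling, the tiles meeting a given edge of the tiling each contribute a tile-edge of matching length. Since the tile has a unique edge of length $b$, every tile meeting a $b$-edge of the tiling must contribute the edge $e_{12}$ and hence the dihedral angle $\alpha$; the angles around that edge must sum to $2\pi$, forcing $\alpha=2\pi/n$ for some integer $n$. The same argument applied to the unique $b/2$-edge gives $\pi-2\alpha=2\pi/m$ for some integer $m$. Since $\alpha\in(0,\pi/2)$ (because $\sin\alpha=b/c<1$ and $\pi-2\alpha>0$), we must have $n\geq 5$. Eliminating $\alpha$ yields $m(n-4)=2n$, so $n-4$ divides $8$, leaving only $n\in\{5,6,8,12\}$, corresponding to $\alpha\in\{2\pi/5,\pi/3,\pi/4,\pi/6\}$.

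Since $\sin\alpha=b/c$ varies continuously and strictly monotonically with the free parameter of the family, only these at most four exceptional members can possibly tile face-to-face, and in particular the general member cannot. The main subtlety will be confirming that the argument is insensitive to reflections of the tile (which preserve edge lengths and dihedral angles, so the counting is unchanged) and handling the finitely many degenerate parameter values where $b$ or $b/2$ happens to coincide with another edge length; these form an explicit finite subset of the family and can be dismissed separately.
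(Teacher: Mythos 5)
Your proposal is correct and follows essentially the same route as the paper: the (generic) uniqueness of the edge of length $b$ forces every tile around a $b$-edge of the tiling to contribute the dihedral angle $\alpha$, so $n\alpha=2\pi$ and $\alpha$ cannot vary continuously. Your additional use of the $b/2$-edge to pin $\alpha$ down to the finite set $\{2\pi/5,\pi/3,\pi/4,\pi/6\}$ is a correct refinement beyond what the paper records, but the core argument is the same.
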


\begin{proof}
In general, a tetrahedron in Goldberg's third family
has only one edge of length $b$ which has dihedral angle $\alpha$ which can vary
continuously. In a face-to-face tile, the tetrahedra must stack
around the edge of length $b$, and the dihedral angles
around that edge must sum to $2\pi$. This means that some multiple of $\alpha$
must be $2\pi$, so $\alpha$ cannot vary continuously.
\end{proof}

We now show that the least-area tetrahedral tile among previously identified tiles is the Sommerville No. 1. We start by showing that the Goldberg
families include many of the previous tiles.

\begin{prop}
\label{prop:prevtetrahedra}
All previously identified tetrahedral tiles
except Sommerville No. 2 and Sommerville No. 4 are included in
Goldberg's families.
\end{prop}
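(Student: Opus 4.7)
The plan is to enumerate the previously identified tetrahedral tiles reviewed earlier in this section and, for every one other than Sommerville No. 2 and No. 4, exhibit a specialization of one of Goldberg's three families that reproduces it. The tiles to account for are Sommerville Nos. 1--4, the Sommerville (xi) tetrahedron of Proposition \ref{prop:somxi_tile}, the Hill tetrahedra, and the Baumgartner tetrahedra (the latter including T2). Goldberg's own three families are trivially in themselves, so they do not require separate treatment.

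First I would treat Sommerville No. 1 and No. 3. These are described in the text as degenerate cases of Goldberg's families in which the tile coincides with its own reflection, so the task is to choose parameter values in the equations under Figures \ref{fig:goldbergfamily1}--\ref{fig:goldbergfamily3} that reproduce the required edge lengths and dihedral angles. For Sommerville No. 1, I would fit the four congruent isosceles faces with edges $(\sqrt{3},\sqrt{3},2)$ and dihedral angles drawn from $\{\pi/3,\pi/2\}$ shown in Figure \ref{fig:sommerville1} against Goldberg's first family, using the $\pi/3$ dihedral angle to fix $a$ relative to $b,c$ and the isosceles constraint to fix $\alpha$. Sommerville No. 3 should admit an analogous matching against the first or second family.

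Next I would handle Sommerville (xi). By Proposition \ref{prop:somxi_tile}, only the two ratios $p/q=\sqrt{2/3}$ and $\sqrt{3/2}$ give tiles, so there are just two explicit tetrahedra to place in Goldberg's families. Using Lemma \ref{lem:computedihedral} I would compute the dihedral angles from the edge lengths and then match them against the defining equations of whichever Goldberg family fits.

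Finally, for the Hill and Baumgartner tiles, the preamble to this section already notes that every such tile other than Baumgartner's T2 is a subcase of Sommerville No. 1--4, so those that are subcases of Nos. 1 or 3 are subsumed by the first step; what remains is to check that none of the Hill or Baumgartner tiles is a subcase of No. 2 or No. 4 \emph{only}. Baumgartner's T2 is stated to be a subcase of Goldberg's second and third families and I would cite that directly. The main obstacle is not depth but bookkeeping: one must match parameters on all six edge lengths and dihedral angles for each candidate, and keep track of whether a given tile lands in Goldberg's first or second family (both of which tile face-to-face by Proposition \ref{prop:goldberg12tile}) rather than only in the third family, which in general does not.
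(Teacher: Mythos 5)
Your proposal is correct and follows essentially the same route as the paper: enumerate the previously identified tiles (Sommerville Nos.~1--4, Sommerville (xi), Hill/Baumgartner, with all but Baumgartner's T2 subsumed by the Sommerville cases) and match each non-excepted one to a Goldberg family by parameter specialization. The paper simply records the explicit specializations (No.~1 is the first family with $\alpha=\pi/3$, No.~3 the second family with $\alpha=\pi/4$, Sommerville (xi) the first family with $3a=c$, T2 the second and third families with $\alpha=\pi/3$), largely citing Goldberg for the justifications you propose to verify by hand.
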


\begin{proof}
We consider each previously mentioned tetrahedron.
Many of these justifications are also made by Goldberg \cite{Gol}.

\begin{enumerate}
\item Sommerville No. 1: First Goldberg family with $\alpha=\pi/3$.
\item Sommerville No. 2: Not in the Goldberg families.
\item Sommerville No. 3: Second Goldberg family with $\alpha=\pi/4$.
\item Sommerville No. 4: Not in the Goldberg families.
\item Sommerville (xi), $p/q=\sqrt{2/3}$: First Goldberg family with $3a=c$.
\item Baumgartner T2: Second Goldberg family with $\alpha=\pi/3$.
Also third Goldberg family with $\alpha=\pi/3$.
\end{enumerate}
\end{proof}

Now we identify the least-area
tetrahedron among Goldberg's infinite families.

\begin{theorem}
\label{thm:goldbergmin}
The least-area tetrahedron among Goldberg's infinite families is the Sommerville No. 1.
\end{theorem}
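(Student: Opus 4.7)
The strategy is to minimize the scale-invariant isoperimetric ratio $S^3/V^2$ over each of Goldberg's three one-parameter families separately, then compare the three family-wise minima. In each case the family relations plus quotienting by similarity reduce the shape space to a single parameter, which I take to be the dihedral angle $\alpha$.

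For Goldberg's first family, I parametrize by $\alpha\in(0,\pi/2)$, with $b/c=\sin\alpha$ and $a$ then fixed by $a^2/3+b^2=c^2$. The surface area $S(\alpha)$ is the sum of the Heron areas of the two congruent isosceles faces with edges $(b,b,c)$ and the two congruent scalene faces with edges $(a,b,c)$; the volume $V(\alpha)$ follows from the Cayley--Menger determinant of Lemma \ref{lem:edgecond} applied to the sextuple $(b,c,a,b,c,b)$. I would then show by direct calculus that $d/d\alpha\,[S^3/V^2]$ has a unique zero in the open interval, located at $\alpha=\pi/3$, and that a second-derivative test (or the fact that $S^3/V^2\to\infty$ as the tetrahedron degenerates at the endpoints $\alpha\to 0^+$ and $\alpha\to\pi/2^-$) identifies this zero as a strict global minimum. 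At $\alpha=\pi/3$ one has $a=b$, so the four faces become congruent isosceles triangles with sides $(\sqrt{3},\sqrt{3},2)$ after normalization; this is precisely the Sommerville No.~1.

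For Goldberg's second and third families, the analogous one-parameter minimization applies after imposing the additional relations $a^2/12+d^2=b^2$ and $b^2+2c^2=4f^2$ respectively. I again parametrize by $\alpha$, compute the four face areas via Heron and the volume via Cayley--Menger, and solve $d/d\alpha\,[S^3/V^2]=0$. The resulting critical values are compared to the Sommerville No.~1 target. Sommerville No.~3 (Family 2 at $\alpha=\pi/4$) and Baumgartner T2 (Family 2 at $\alpha=\pi/3$, and Family 3 at $\alpha=\pi/3$), which are already known to tile, serve as useful sanity checks for the closed-form expressions of $S^3/V^2$ at those parameters.

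The main obstacle will be algebraic bulk: even though each family reduces to one-variable calculus, the Heron and Cayley--Menger expressions contain nested radicals of low-degree polynomials in the edge lengths, so the critical-point equations $d/d\alpha\,[S^3/V^2]=0$ are unpleasant to manipulate by hand. Symbolic computer algebra, in line with the paper's computer-assisted methodology, paired with second-derivative tests and the easy boundary-limit analysis, should render the minimizations tractable. Concatenating the three family-wise minima then proves that the Sommerville No.~1 uniquely attains the smallest $S^3/V^2$ among all Goldberg tetrahedra, i.e., the least surface area at fixed unit volume.
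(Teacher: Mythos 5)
Your proposal is correct and matches the paper's proof in essentially every respect: both reduce each Goldberg family to a one-parameter minimization of the scale-invariant ratio $S^3/V^2$, computing face areas via Heron's formula and volume via the Cayley--Menger determinant, and both delegate the resulting one-variable optimization to symbolic/numerical computer algebra before comparing the three family-wise minima. The only difference is cosmetic---you parametrize by the dihedral angle $\alpha$ while the paper uses the normalized edge-length parameter $a$ (with the first-family minimum at $a=1/3$, i.e.\ $\alpha=\pi/3$, giving Sommerville No.~1).
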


\begin{proof}
Let $V$ be the volume of a tetrahedron and $S$ be its surface area.
To minimize the surface area given unit volume,
we  minimize the quantity $S^3/V^2$, which is invariant up to scaling.
Given edge lengths, the volume can be written in terms of the Cayley-Menger determinant as in Lemma \ref{lem:edgecond}.
The surface area is the sum of the face areas calculated
using Heron's formula:
\begin{align*}
S=&\sqrt{s_1(s_1-d_{23})(s_1-d_{24})(s_1-d_{34})}+\sqrt{s_2(s_2-d_{13})(s_2-d_{14})(s_2-d_{34})}\\
+&\sqrt{s_3(s_3-d_{12})(s_3-d_{14})(s_3-d_{24})}+\sqrt{s_4(s_4-d_{12})(s_4-d_{13})(s_4-d_{23})},
\end{align*}
where
$$s_i=\frac{d_{jk}+d_{kl}+d_{jl}}{2}$$
for $\set{i,j,k,l}=\set{1,2,3,4}$.
Each of Goldberg's families have one degree of freedom
and therefore $S^3/V^2$ can be written as a function of one parameter.
We then find the minimum of this function with Mathematica.

\begin{enumerate}
\item First family: $d_{12}=1, \ d_{13}=\sqrt{3a^2+1}, \ d_{14}=3a, \ d_{23}=1, \ d_{24}=\sqrt{3a^2+1}, \ d_{34}=1$.

The minimum is $\sim 7.413$ at $a=1/3$.
\item Second family: $d_{12}=1, \ d_{13}=\sqrt{3a^2+1}, \ d_{14}=3a/2, \ d_{23}=1, \ d_{24}=\sqrt{4-3a^2}/2, \ d_{34}=\sqrt{4-3a^2}/2$.

The minimum is $\sim 8.109$ at $a\approx0.491$.
\item Third family: $d_{12}=1, \ d_{13}=\sqrt{6a^2+3}/2, \ d_{14}=3a, \ d_{23}=1/2, \ d_{24}=\sqrt{3a^2+1}, \ d_{34}=\sqrt{6a^2+3}/2$.

The minimum is $\sim 8.273$ at $a\approx0.238$.
\end{enumerate}

From these calculations, Goldberg's first family with $a=1/3$ is
the least-area tetrahedron, and this tetrahedron is the Sommerville No. 1.
The code for this computation is in the GitHub repository in the file
\verb|GoldbergTetrahedra.nb|.
\end{proof}

From Proposition \ref{prop:prevtetrahedra} and Theorem \ref{thm:goldbergmin} we can deduce the least-area tetrahedral tile among all previously identified tiles.

\begin{cor}
\label{cor:prevknown_so1best}
The least-area tile among all previously identified tetrahedral tiles is the Sommerville No. 1.
\end{cor}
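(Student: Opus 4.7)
The plan is to combine the two results immediately preceding the corollary and reduce the remaining work to a finite comparison. By Proposition \ref{prop:prevtetrahedra}, every previously identified tetrahedral tile either lies in one of Goldberg's three infinite families or is one of the two exceptional tiles Sommerville No. 2 and Sommerville No. 4. Theorem \ref{thm:goldbergmin} handles the first case: within all three Goldberg families, the surface-area minimizer at unit volume is the Sommerville No. 1, with scale-invariant value $S^3/V^2 \approx 7.413$. So it suffices to check that neither Sommerville No. 2 nor Sommerville No. 4 beats this number.

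To handle the two exceptions, I would look up the edge lengths of Sommerville No. 2 and No. 4 from Sommerville's list (as reproduced by Goldberg \cite{Gol}), compute their four face areas using Heron's formula, sum to get $S$, and compute $V$ from the Cayley-Menger determinant of Lemma \ref{lem:edgecond}. Both are fully determined tetrahedra (no free parameter), so this is a single evaluation of $S^3/V^2$ in each case rather than a minimization. The same Mathematica notebook used for Theorem \ref{thm:goldbergmin} can carry out these two evaluations, which fits naturally into the paper's computational workflow.

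Concluding the comparison is then immediate: if both exceptional values exceed $7.413$, then combined with Theorem \ref{thm:goldbergmin} the Sommerville No. 1 is the least-area tile among all previously identified tetrahedral tiles, which is exactly the statement of the corollary. The main (and only) obstacle is not conceptual but bookkeeping — making sure that the edge-length data cited for Sommerville No. 2 and No. 4 is transcribed correctly from the older literature, since an error in any one of the six edge lengths would invalidate the computed $S^3/V^2$. Beyond that check, the proof is a one-line appeal to the two results already in hand plus the two numerical evaluations.
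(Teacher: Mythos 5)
Your proposal is correct and follows essentially the same route as the paper: decompose the previously identified tiles via Proposition \ref{prop:prevtetrahedra} and invoke Theorem \ref{thm:goldbergmin} for the Goldberg families. The only difference is that the paper disposes of Sommerville No.~2 and No.~4 by citing the computation of Gallagher \emph{et al.} \cite{Gal}, who already verified that Sommerville No.~1 is the least-area tetrahedron among the Sommerville tiles, whereas you propose to recompute $S^3/V^2$ for those two fixed tetrahedra directly; either suffices.
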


\begin{proof}
By Proposition \ref{prop:prevtetrahedra}, all previously identified
tetrahedral tiles are Sommerville No. 2 and No. 4 and
the Goldberg infinite families. Gallagher \emph{et al.} \cite{Gal} calculated that the Sommerville No. 1 is the least-area tetrahedron among all the Sommerville cases,
and by Theorem \ref{thm:goldbergmin} the Sommerville No. 1 is the least-area tetrahedron among the Goldberg infinite families.
\end{proof}

\section{Edge-Length Graphs}
\label{sec:edgelengraphs}
In Section \ref{sec:edgelengraphs} we define a combinatorial object called an
\emph{edge-length graph}, which is used to show that certain linear combinations of dihedral angles sum to $2\pi$ (Prop. \ref{prop:edgegraphlincomb}). We continue to assume that tilings are face-to-face.
Here we consider only tetrahedra, but this concept can be easily generalized
to other polyhedra.

\begin{definition}
Let $T$ be a tetrahedron and $d$ one of its edge lengths. We define the
\emph{$d$-edge-length graph} to be the graph where for
$\set{i,j,k,l}=\set{1,2,3,4}$, if edge $e_{ij}$ has length $d$,
the ordered pairs of edge lengths $(d_{ik}, d_{jk})$
and $(d_{il}, d_{jl})$ are an edge of the graph.
We label this graph edge with the unordered pair of numbers $\set{i,j}$.

Notice that if $(d_{ik}, d_{jk})=(d_{il}, d_{jl})$, then this graph edge
is a loop. Each graph edge can have multiple labels.
\end{definition}

The meaning of the edge-length graph is given in Lemma \ref{lem:edgegraphmean}, which states that going from node to node in the edge-length graph is equivalent to stacking a tetrahedron around an edge. This concept is illustrated in Figure \ref{fig:stackaroundedge}.

\begin{figure}[ht]
\begin{subfigure}{.5\textwidth}
\begin{tikzpicture}[scale=1.5,transform shape]
\draw [black] (0,2) -- (0,-2);
\draw [black] (-1.5,-0.3) -- (1.5,0.3);
\draw [black] (0,2) -- (-1.5,-0.3);
\draw [black] (0,-2) -- (-1.5,-0.3);
\draw [black] (0,2) -- (1.5,0.3);
\draw [black] (0,-2) -- (1.5,0.3);

\draw [black, dashed] (-1.5,-0.3) -- (-1.3, 0.4);
\draw [black, dashed] (-1.3, 0.4) -- (0,2);
\draw [black, dashed] (-1.3, 0.4) -- (0,-2);

\draw [black, dashed] (1.5,0.3) -- (1.7, -0.4);
\draw [black, dashed] (1.7, -0.4) -- (0,2);
\draw [black, dashed] (1.7, -0.4) -- (0,-2);


\node [anchor=south] at (0, 2){\small 1};
\node [anchor=north] at (0, -2){\small 3};
\node [anchor=west] at (1.5,0.3){\small 4};
\node [anchor=east] at (-1.5,-0.3){\small 2};

\node [anchor=center] at (-0.1,-0.7){\small $a$};
\node [anchor=center] at (0.5,0.25){\small $f$};
\node [anchor=center] at (1,1.1){\small $d$};
\node [anchor=center] at (-0.6,0.85){\small $b$};
\node [anchor=center] at (0.6,-0.9){\small $e$};
\node [anchor=center] at (-1,-1.1){\small $c$};
\end{tikzpicture}

$$\pathlentwo{(b,c)}{(d,e)}{13}{.75}{1}$$
$$\pathlentwo{(c,b)}{(e,d)}{13}{.75}{1}$$

   \centering
   \caption{Side View}
\end{subfigure}
\begin{subfigure}{.4\textwidth}
\begin{tikzpicture}[scale=2,transform shape]
\draw [black] (0,1) -- (1,1);
\draw [black] (0,1) -- (0.5, 1.866);
\draw [black] (1,1) -- (0.5, 1.866);

\draw [black] (0,1) -- (0.5, 0.134);
\draw [black] (0,1) -- (-0.5, 0.134);
\draw [black] (-0.5, 0.134) -- (0.5, 0.134);
\draw [black] (0,1) -- (-1,1);
\draw [black] (-1,1) -- (-0.5, 0.134);
\draw [black] (0.5, 0.134) -- (1,1);

\node [anchor=center] at (-0.05, 1.1) {\tiny $a$};
\node [anchor=center] at (0.85, 1.45) {\tiny $f$};
\node [anchor=center] at (0.85, 0.55) {\tiny $f$};
\node [anchor=center] at (-0.85, 0.55) {\tiny $f$};
\node [anchor=center] at (0, 0) {\tiny $f$};
\node [anchor=center] at (-0.5, 1.1) {\tiny $b$};
\node [anchor=center] at (0.5, 1.1) {\tiny $d$};
\node [anchor=center] at (-0.25, 0.5) {\tiny $d$};
\node [anchor=center] at (0.25, 0.5) {\tiny $b$};
\node [anchor=center] at (0.25, 1.55) {\tiny $b$};

\draw [->, thick] (0.2,1.8) to [out=160, in=90](-0.7, 1.1);

\node [anchor=center] at (0, -0.3) {\tiny From Vertex 1};

\draw [black] (0,-1.4) -- (1,-1.4);
\draw [black] (0,-1.4) -- (0.5, -0.534);
\draw [black] (1,-1.4) -- (0.5, -0.534);

\draw [black] (0,-1.4) -- (0.5, -2.266);
\draw [black] (0,-1.4) -- (-0.5,-2.266);
\draw [black] (-0.5, -2.266) -- (0.5, -2.266);
\draw [black] (0,-1.4) -- (-1,-1.4);
\draw [black] (-1,-1.4) -- (-0.5, -2.266);
\draw [black] (0.5, -2.266) -- (1,-1.4);

\node [anchor=center] at (-0.05, -1.3) {\tiny $a$};
\node [anchor=center] at (0.85, -0.95) {\tiny $f$};
\node [anchor=center] at (0.85, -1.85) {\tiny $f$};
\node [anchor=center] at (-0.85, -1.85) {\tiny $f$};
\node [anchor=center] at (0, -2.4) {\tiny $f$};
\node [anchor=center] at (-0.5, -1.3) {\tiny $c$};
\node [anchor=center] at (0.5, -1.3) {\tiny $e$};
\node [anchor=center] at (-0.25, -1.9) {\tiny $e$};
\node [anchor=center] at (0.2, -1.9) {\tiny $c$};
\node [anchor=center] at (0.2, -0.95) {\tiny $c$};

\draw [->, thick] (0.2,-0.6) to [out=160, in=90](-0.7, -1.3);

\node [anchor=center] at (0, -2.75) {\tiny From Vertex 3};

\end{tikzpicture}
  \centering
  \caption{Top and Bottom View}
\end{subfigure}
\caption{The $a$-edge-length graph for a tetrahedron with edge lengths $a,b,c,d,e,f$ all distinct. The first line describes that the edge with length $a$, $e_{13}$, connects $(b,c)$ to $(d,e)$. The second line describes that the edge with length $a$, $e_{13}$, connects $(c,b)$ to $(e,d)$. From this graph and Proposition \ref{prop:edgegraphlincomb}, $n\theta_{13}=2\pi$ for some $n\in\mb{N}$.}
\label{fig:stackaroundedge}
\end{figure}

\begin{lemma}
\label{lem:edgegraphmean}
Let $T$ be a tetrahedron.
Suppose you have disjoint copies $T_i$ of $T$ stacked around a line segment $PQ$
(with one edge of each copy coinciding with $PQ$,
so that the sum of the dihedral angles is $2\pi$).
Copy $T_i$ of $T$ has faces $PQR_i$ and $PQR_{i+1}$.
Then $(PR_i,QR_i)$ and $(PR_{i+1},QR_{i+1})$ are an edge
in the $d$-edge-length graph of $T$.
In other words,
$$(PR_1,QR_1),\dots,(PR_n,QR_n),(PR_1,QR_1)$$
is a closed walk in the $d$-edge-length graph of $T$.
\end{lemma}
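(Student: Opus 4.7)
The proof amounts to carefully unpacking the definitions, so I expect no deep obstacle; the plan is to translate each congruence $T_i \cong T$ into a single edge of the $d$-edge-length graph, and then to verify that the pieces fit together into a closed walk.

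Fix $i$ and let $\phi_i \colon T \to T_i$ be the congruence. Since $PQ$ is an edge of $T_i$ of length $d$, its preimage $\phi_i^{-1}(PQ)$ is some edge $e_{ab}$ of $T$ with $d_{ab} = d$. Write $\set{c, c'} = \set{1,2,3,4} \setminus \set{a,b}$. The two faces of $T_i$ meeting along $PQ$, namely $PQR_i$ and $PQR_{i+1}$, must correspond under $\phi_i^{-1}$ to the two faces of $T$ containing $e_{ab}$, namely $V_aV_bV_c$ and $V_aV_bV_{c'}$. After relabeling $c, c'$ if necessary, assume $\phi_i(V_c) = R_i$ and $\phi_i(V_{c'}) = R_{i+1}$.

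There are now two subcases according to the orientation of $\phi_i$ along $PQ$. If $\phi_i(V_a) = P$ and $\phi_i(V_b) = Q$, then because $\phi_i$ is an isometry we have $(PR_i, QR_i) = (d_{ac}, d_{bc})$ and $(PR_{i+1}, QR_{i+1}) = (d_{ac'}, d_{bc'})$; these form an edge of the $d$-edge-length graph directly from the definition applied with index tuple $(a, b, c, c')$. In the other subcase $\phi_i(V_a) = Q$, $\phi_i(V_b) = P$, we instead obtain $(d_{bc}, d_{ac})$ and $(d_{bc'}, d_{ac'})$; this is an edge of the graph by applying the same definition with the roles of $a$ and $b$ interchanged (which is legal because $\set{i,j,k,l}=\set{1,2,3,4}$ is unordered).

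Consecutive pairs are therefore connected by graph edges, so the sequence $(PR_1, QR_1), \ldots, (PR_n, QR_n)$ is a walk in the $d$-edge-length graph. To see it closes, I use the hypothesis that the tetrahedra $T_1, \dots, T_n$ stack around $PQ$ with dihedral angles summing to $2\pi$: the last face $PQR_{n+1}$ of $T_n$ must then coincide with the first face $PQR_1$ of $T_1$, so $R_{n+1} = R_1$, and the walk returns to its starting vertex. The only bookkeeping subtlety is tracking which of $V_a, V_b$ is identified with $P$, but as shown this has no effect on whether a graph edge is present.
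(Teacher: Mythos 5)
Your proposal is correct and takes essentially the same approach as the paper, whose entire proof is the sentence ``This follows directly from the definition of the edge-length graph.'' Your write-up simply makes that direct verification explicit, including the harmless bookkeeping about which endpoint of $e_{ab}$ maps to $P$.
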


\begin{proof}
This follows directly from the definition of the edge-length graph.
\end{proof}

The following proposition provides information on linear
combinations of dihedral angles from the edge-length graph.

\begin{prop}
\label{prop:edgegraphlincomb}
Let $T$ be a tetrahedral tile and $d$ one of its edge lengths.
From any node in the $d$-edge-length graph of $T$,
there are a closed walk of length $n$ starting and ending at that node
and labels $\set{i_1,j_1},\set{i_2,j_2},\dots,\set{i_n,j_n}$,
where $\set{i_k,j_k}$ is a label of the $k$th edge of the closed walk,
such that
$$\theta_{i_1j_1}+\dots+\theta_{i_nj_n}=2\pi.$$
\end{prop}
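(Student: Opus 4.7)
The plan is to pick any node $(x,y)$ in the $d$-edge-length graph, realize it as a face appearing in the tiling, and then apply Lemma \ref{lem:edgegraphmean} to the corresponding edge. By the definition of the graph, the node $(x,y)$ arises from a face of $T$ with vertices $V_i,V_j,V_k$ satisfying $d_{ij}=d$, $d_{ik}=x$, and $d_{jk}=y$. Since the tiling consists of congruent copies of $T$, this face is realized in some copy; write $P,Q,R_1$ for the images of $V_i,V_j,V_k$ under the corresponding isometry, so that $|PQ|=d$ and $(|PR_1|,|QR_1|)=(x,y)$.

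Next I would apply Lemma \ref{lem:edgegraphmean} to the segment $PQ$ in the tiling. Because the tiling is face-to-face, some cyclic sequence of tetrahedra $T_1,T_2,\ldots,T_n$ surround $PQ$, with $T_k$ and $T_{k+1}$ sharing the face $PQR_{k+1}$ (indices mod $n$), and their dihedral angles at $PQ$ filling the full $2\pi$. The lemma directly produces a closed walk
$$(|PR_1|,|QR_1|),\,(|PR_2|,|QR_2|),\,\ldots,\,(|PR_n|,|QR_n|),\,(|PR_1|,|QR_1|)$$
in the $d$-edge-length graph, starting and ending at $(x,y)$ as required. To obtain the dihedral-angle identity, I would identify the label $\{i_k,j_k\}$ on the $k$th graph edge with the edge of $T$ that is mapped to $PQ$ by the isometry sending $T$ to $T_k$. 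Since isometries preserve dihedral angles, the dihedral angle contributed by $T_k$ at $PQ$ equals $\theta_{i_k j_k}$, and summing around $PQ$ gives $\theta_{i_1 j_1}+\cdots+\theta_{i_n j_n}=2\pi$.

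The main obstacle is the bookkeeping required to ensure that the walk starts at the chosen node $(x,y)$ with the correct ordering of coordinates rather than at $(y,x)$: this amounts to making sure that $P$ (and not $Q$) is the image of $V_i$ in the selected copy $T_1$. This is handled by a suitable choice of copy in the tiling, allowing a reflected copy if necessary; one just needs to observe that both orderings of any face of $T$ with a $d$-edge arise among the copies of $T$ in a tile since a tiling contains (translated, rotated, and possibly reflected) copies of $T$ through every orientation of every edge. Once this orientation check is made, the rest of the argument is a direct translation between the combinatorics of the walk and the geometry of stacking tetrahedra around $PQ$.
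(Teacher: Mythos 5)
Your proposal is correct and follows essentially the same route as the paper's (very terse) proof: realize the chosen node as a pair of adjacent edges of a face of $T$ in a fixed tiling, stack copies of $T$ around the third edge of that face (the edge of length $d$), invoke Lemma \ref{lem:edgegraphmean} to read off the closed walk, and sum the dihedral angles around that edge to $2\pi$. One small correction to your final paragraph: the claim that a tiling contains copies of $T$ "through every orientation of every edge" is not justified (and is false as stated, since a tiling realizes only countably many placements), but you do not need it. The orientation bookkeeping is handled simply by taking \emph{any} copy of $T$ in the tiling and declaring $P$ and $Q$ to be the images of $V_i$ and $V_j$ respectively under the isometry carrying $T$ to that copy; the labels $P,Q$ in Lemma \ref{lem:edgegraphmean} are free to be assigned either way, so the walk automatically starts at $(d_{ik},d_{jk})$ rather than $(d_{jk},d_{ik})$.
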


\begin{proof}
Any node of the edge-length graph corresponds to at least one pair of adjacent edges of a face of $T$. Fix a tiling, and consider how $T$ stacks around the third edge of the face.
By Lemma \ref{lem:edgegraphmean}, there are a closed walk starting and ending
at that node and labels $\set{i_k,j_k}$ associated with the $k$th edge
of the closed walk such that the angles $\theta_{i_kj_k}$ are the dihedral angles
between copies of $T$ in the stack, summing to $2\pi$.
\end{proof}

\begin{cor}
\label{cor:edgegraphlincomb}
Given an edge length $d$, for all edges $e_{ij}$ of length $d$,
there are nonnegative integers $n_{ij}$ such that
$$\sum n_{ij}\theta_{ij}=2\pi.$$
That is, some linear combination of the dihedral angles associated with edges of length $d$ sums to $2\pi$.
Moreover, there is a closed walk in the $d$-edge-length graph
such that the label $\set{i,j}$ is passed through exactly $n_{ij}$
times, where for a graph edge with multiple labels, we can pick any label
to represent that edge each time we pass through the edge.
\end{cor}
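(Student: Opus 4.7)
The plan is to derive this as an essentially direct repackaging of Proposition \ref{prop:edgegraphlincomb}. First I would apply that proposition at any node of the $d$-edge-length graph to obtain a closed walk of some length $n$ together with a choice of labels $\{i_1,j_1\},\{i_2,j_2\},\dots,\{i_n,j_n\}$ for the successive graph edges, satisfying
$$\theta_{i_1j_1}+\theta_{i_2j_2}+\dots+\theta_{i_nj_n}=2\pi.$$
By the definition of the $d$-edge-length graph, each label $\{i_k,j_k\}$ corresponds to an edge $e_{i_kj_k}$ of $T$ of length $d$, so every term in this sum is already of the allowed form.

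Next I would convert this ordered sum into a collected sum. For each edge $e_{ij}$ of $T$ of length $d$, define $n_{ij}$ to be the number of indices $k\in\{1,\dots,n\}$ such that $\{i_k,j_k\}=\{i,j\}$. Then each $n_{ij}$ is a nonnegative integer (possibly zero, if the label $\{i,j\}$ does not appear in the walk at all), and grouping identical terms in the displayed identity gives
$$\sum_{\substack{\{i,j\}:\\ e_{ij}\text{ has length }d}} n_{ij}\,\theta_{ij}=2\pi,$$
which is the stated linear combination. The same closed walk, together with the specific label assignments $\{i_k,j_k\}$ used above, is the one promised in the ``moreover'' clause: each graph-edge traversal contributes to exactly one $n_{ij}$, and when a graph edge carries multiple labels we are free to use whichever label we wish on each traversal because Proposition \ref{prop:edgegraphlincomb} already allows this freedom in its labeling.

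There is no substantive obstacle: the content lies in Proposition \ref{prop:edgegraphlincomb}, and the corollary is simply a change in bookkeeping, from a sequence of labels listed in walking order to a multiplicity vector indexed by the edges of length $d$. The only point worth noting carefully is that the correspondence between labels and edges of length $d$ is built into the definition of the edge-length graph, so no label can escape the index set over which the sum is taken.
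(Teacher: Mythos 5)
Your proposal is correct and matches the paper's intent exactly: the paper states this corollary without a separate proof, treating it as an immediate consequence of Proposition \ref{prop:edgegraphlincomb}, and your argument is precisely that derivation made explicit (collecting the ordered label sequence into multiplicities $n_{ij}$). Nothing is missing.
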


The following corollary is a special case
of Corollary \ref{cor:edgegraphlincomb}.

\begin{cor}
\label{cor:edge2pi/n}
Let $T$ be a tetrahedron which tiles space face-to-face and $d$
one of its edge lengths.
If all edges with length $d$ have the same associated dihedral angle $\theta(d)$,
then $\theta(d)$ is $2\pi/n$ for some $n \in \mathbb{N}$.

Moreover, if the $d$-edge-length graph of $T$ has no closed walk
of odd length,
then $\theta(d)$ is $\pi/n$ for some $n\in\mathbb{N}$.
\end{cor}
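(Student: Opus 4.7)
The plan is to derive both statements as immediate consequences of Corollary \ref{cor:edgegraphlincomb}, which already provides a linear combination of the dihedral angles at edges of length $d$ summing to $2\pi$, together with a closed walk in the $d$-edge-length graph realizing this combination.

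For the first claim, I would apply Corollary \ref{cor:edgegraphlincomb} to obtain nonnegative integers $n_{ij}$, indexed over edges $e_{ij}$ of length $d$, such that
\[
\sum n_{ij}\,\theta_{ij} \;=\; 2\pi.
\]
The hypothesis that every edge of length $d$ carries the same dihedral angle $\theta(d)$ lets me factor this as $\bigl(\sum n_{ij}\bigr)\theta(d) = 2\pi$. Setting $n := \sum n_{ij}$, which is a positive integer because the sum of dihedral angles around an edge cannot be zero, gives $\theta(d) = 2\pi/n$, as desired.

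For the second claim, I would use the more refined conclusion of Corollary \ref{cor:edgegraphlincomb}: there is an actual closed walk in the $d$-edge-length graph whose length (the total number of edges traversed, counting multiplicity) equals $n = \sum n_{ij}$. A standard graph-theoretic fact is that a graph admits no closed walk of odd length if and only if it is bipartite, equivalently if and only if every closed walk has even length. So the hypothesis forces $n$ to be even, and writing $n = 2m$ yields $\theta(d) = 2\pi/n = \pi/m$ with $m \in \mathbb{N}$.

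There is no serious obstacle: the work was done in Proposition \ref{prop:edgegraphlincomb} and its corollary, and the present statement is just the specialization to the case where all dihedral angles at edges of length $d$ coincide, plus the parity refinement afforded by bipartiteness. The only small point to mention is that $n \geq 1$ (so that division is legitimate), which follows because $\sum n_{ij}\,\theta(d) = 2\pi > 0$ and $\theta(d) < \pi$ for a nondegenerate tetrahedron.
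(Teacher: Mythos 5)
Your proposal is correct and follows essentially the same route as the paper: apply Corollary \ref{cor:edgegraphlincomb}, factor out the common angle to get $\left(\sum n_{ij}\right)\theta(d)=2\pi$, and use the closed-walk realization to conclude $\sum n_{ij}$ is even when the $d$-edge-length graph has no odd closed walk. The extra remarks on bipartiteness and on $\sum n_{ij}\geq 1$ are harmless elaborations of points the paper leaves implicit.
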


\begin{proof}
By Corollary \ref{cor:edgegraphlincomb},
$$\paren{\sum n_{ij}} \theta(d)=2\pi,$$
so $\theta(d)$ is $2\pi/n$ for some $n\in\mb{N}$.
If the $d$-edge-length graph has no closed walk of odd length,
then $\sum n_{ij}$ is even, so
$\theta(d)=\pi/n$ for some $n\in\mb{N}$.
\end{proof}

\begin{remark}
\label{rem:closedwalk}
We can characterize closed walks in specific graphs.
In a tree, a closed walk passes through each edge an even number of times.
If a graph includes a 3-cycle whose removal leaves the three vertices pairwise disconnected, then the numbers of times the edges of this 3-cycle are traversed by a closed walk are either all even or all odd.
\end{remark}

\section{Tetrahedra with Dihedral Angles $2\pi/n$}
\label{sec:2pi/n}
In Section \ref{sec:2pi/n}  Table \ref{tab:2pi/n}, we identify all tetrahedra with 
dihedral angles all of the form $2\pi/n$ in Table \ref{tab:2pi/n}. (We continue to assume that all tilings are face-to-face.)
By Proposition \ref{prop:dihedralfixes}, the (ordered) dihedral angles
determine the tetrahedron up to translation, rotation, and scaling. (Reflection alters the order of the dihedral angles.)
We prove that these lists are exhaustive
in Theorems \ref{thm:2pi/nall} and \ref{thm:2pi/ntile}.

\begin{table}
  \caption{All tetrahedra with dihedral angles of the form $\theta_{ij}=2\pi/n_{ij}$, up to permutation of vertices (Thm. \ref{thm:2pi/nall}). Area denotes the surface area for a tetrahedron of volume 1. The regular tetrahedron has area about 7.21.
  The tiles were previously identified by Sommerville \cite{So} and Goldberg \cite{Gol} (Sect. \ref{sec:prevtiles}). The last six do not tile (Thm. \ref{thm:2pi/ntile}).
}
  \begin{tabular}{| l | c | c | c | c | c | c | c |}
    \hline
    \multicolumn{8}{|c|}{Tiles} \\ \hline
     & $n_{12}$ & $n_{13}$ & $n_{14}$ & $n_{23}$ & $n_{24}$ & $n_{34}$ & Area \\ \hline
    Sommerville No. 3 & 3 & 6 & 6 & 8 & 8 & 4 & 8.18 \\ \hline
    Sommerville No. 2 & 4 & 4 & 4 & 6 & 6 & 8 & 7.96 \\ \hline
    First Goldberg family, $\alpha=2\pi/8$ & 4 & 4 & 8 & 8 & 4 & 6 & 7.97 \\ \hline
    First Goldberg family, $\alpha=2\pi/5$ & 4 & 5 & 6 & 10 & 5 & 4 & 7.90 \\ \hline
    Sommerville No. 1 & 4 & 6 & 6 & 6 & 6 & 4 & 7.41 \\ \hline
    \multicolumn{8}{|c|}{Non-tiles} \\ \hline
     & $n_{12}$ & $n_{13}$ & $n_{14}$ & $n_{23}$ & $n_{24}$ & $n_{34}$ & Area \\ \hline
    NT(A) & 3 & 4 & 5 & 10 & 6 & 6 & 8.81 \\ \hline
    NT(B) & 3 & 5 & 5 & 10 & 10 & 4 & 8.81 \\ \hline
    NT(C) & 3 & 5 & 10 & 10 & 6 & 4 & 8.84 \\ \hline
    NT(D) & 3 & 6 & 10 & 10 & 10 & 3 & 9.28 \\ \hline
    NT(E) & 4 & 4 & 4 & 5 & 6 & 10 & 8.64 \\ \hline
    NT(F) & 4 & 5 & 6 & 5 & 6 & 5 & 7.53 \\ \hline
  \end{tabular}
  \label{tab:2pi/n}
\end{table}

First we check that the the dihedral angles of tetrahedra
in Table \ref{tab:2pi/n} form valid tetrahedra.
This is not trivial since there are no simple
sufficient  conditions on the dihedral angles.

\begin{lemma}
\label{lem:projratioedgelen}
In any tetrahedron, for any $\set{i,j,k,l}=\set{1,2,3,4}$, 
$$d_{kl}:d_{jl}:d_{jk}=
\abs{F_j}\sin\theta_{kl}:\abs{F_k}\sin\theta_{jl}:\abs{F_l}\sin\theta_{jk}.$$
\end{lemma}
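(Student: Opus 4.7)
The plan is to establish the single-edge identity
$$d_{ab}=\frac{2|F_c||F_d|\sin\theta_{ab}}{3V}$$
for any labelling $\{a,b,c,d\}=\{1,2,3,4\}$, where $V$ is the volume of the tetrahedron, and then to apply it to the three edges $e_{jk},e_{jl},e_{kl}$ of the face $F_i$. In each of those three applications the factor $2|F_i|/(3V)$ is common, so it cancels in the three-way ratio and leaves exactly $|F_j|\sin\theta_{kl}:|F_k|\sin\theta_{jl}:|F_l|\sin\theta_{jk}$, which is the claim.

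To prove the single-edge identity, I would view the tetrahedron with base $F_c$ and apex $V_c$, giving $V=\tfrac13|F_c|h_c$ where $h_c$ is the distance from $V_c$ to the plane of $F_c$. The edge $e_{ab}$ is shared by $F_c$ and $F_d$, meeting at dihedral angle $\theta_{ab}$. Since $V_c\in F_d$, I drop the in-plane perpendicular from $V_c$ to $e_{ab}$ inside $F_d$, with foot $Q$; the triangle-area formula applied to $F_d$ gives $|V_cQ|=2|F_d|/d_{ab}$. The segment $V_cQ$ is perpendicular to $e_{ab}$ at $Q$, and by definition of $\theta_{ab}$ it makes angle $\theta_{ab}$ with the in-plane perpendicular to $e_{ab}$ at $Q$ inside $F_c$; since both perpendiculars lie in the plane through $Q$ normal to $e_{ab}$, the component of $V_cQ$ perpendicular to the plane of $F_c$ is $|V_cQ|\sin\theta_{ab}$. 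Hence $h_c=2|F_d|\sin\theta_{ab}/d_{ab}$, and substituting into $V=\tfrac13|F_c|h_c$ yields the identity.

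The main point to handle carefully is that last geometric step, namely that the $\theta_{ab}$ in the dihedral-angle definition is the same angle that governs the tilt of $V_cQ$ out of the plane of $F_c$; once this is in place, the result follows by pure cancellation in the three-way ratio, with no need to invoke the law of sines on the face $F_i$ (although that law provides an independent sanity check via $\sin(\angle V_j) = 2|F_i|/(d_{jk}d_{jl})$ and its cyclic counterparts).
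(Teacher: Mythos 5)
Your proof is correct and is essentially the paper's argument: your key identity $h_c = 2\abs{F_d}\sin\theta_{ab}/d_{ab}$ is exactly the paper's projection relation $\abs{F_j}\sin\theta_{kl}=\tfrac{1}{2}h_i d_{kl}$, and both proofs finish by cancelling a common factor from the three-way ratio. The only (cosmetic) difference is that you route the cancellation through the volume via $V=\tfrac13\abs{F_c}h_c$, whereas the paper cancels the height $h_i$ directly; your careful justification of the projection step fills in what the paper dismisses with the phrase ``by projection.''
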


\begin{proof}
If $h_i$ is the height corresponding to face $F_i$, then by projection
$$\abs{F_j}\sin\theta_{kl}=\frac{1}{2}h_id_{kl}.$$
Thus the ratio in the lemma statement holds by canceling out $h_i$.
\end{proof}

For some proofs we will need to compute the edge lengths of a tetrahedron given its dihedral angles. This computation is described in Lemma \ref{lem:lengthfromangle}.

\begin{lemma}
\label{lem:lengthfromangle}
Given the dihedral angles of a tetrahedron, the edge lengths can be computed
(up to scaling).
\end{lemma}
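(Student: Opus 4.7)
The plan is to recover the face areas first (up to an overall positive scalar), and then to convert face areas into edge lengths using the projection identity of Lemma \ref{lem:projratioedgelen}.

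First I would apply Lemma \ref{lem:dihedralrelation}: the $4 \times 4$ cosine matrix $M$ appearing there is determined entirely by the given dihedral angles, and the area vector $\paren{\abs{F_1}, \abs{F_2}, \abs{F_3}, \abs{F_4}}^T$ lies in its null space. By Proposition \ref{prop:dihedralfixes}, the tetrahedron is determined by its dihedral angles up to translation, rotation, and scaling, so its face areas are determined up to a common positive scalar. Hence $\ker M$ admits a unique (up to scaling) positive vector, and that vector gives the four face areas.

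Next I would apply Lemma \ref{lem:projratioedgelen} face by face: on each face $F_i$, it expresses the ratios among the three edge lengths of $F_i$ purely in terms of the dihedral angles and of the face areas $\abs{F_j}, \abs{F_k}, \abs{F_l}$, all of which are now in hand. This pins down the three edges of each face up to a per-face scaling. Since every edge is shared by exactly two faces, fixing the scale on one face propagates unambiguously along shared edges to all six edges, yielding the full edge-length sextuple up to a single overall scaling factor.

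The main obstacle is the consistency of this propagation: one must verify that the per-face scalings imposed by Lemma \ref{lem:projratioedgelen} on adjacent faces actually agree on the edge they share, so that the procedure is well defined rather than producing mutually incompatible ratios. Consistency is guaranteed a priori by Proposition \ref{prop:dihedralfixes}, since the underlying tetrahedron exists and is unique up to scaling and my construction merely extracts its edge lengths; a direct verification, if desired, could be carried out by cross-checking the ratios from two adjacent faces against the determinantal identity of Lemma \ref{lem:dihedralrelation}.
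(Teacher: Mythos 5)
Your proposal is correct and takes essentially the same route as the paper: Lemma \ref{lem:dihedralrelation} to recover the face-area vector up to scaling, then Lemma \ref{lem:projratioedgelen} to get the edge-length ratios on each face. The only (immaterial) difference is the final assembly step --- the paper fixes each face's scale directly from its now-known area via elementary triangle facts, whereas you propagate a single scale across shared edges; both are valid.
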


\begin{proof}
Lemma \ref{lem:dihedralrelation} can be used to compute
an area vector of the tetrahedron, up to scaling.
By Lemma \ref{lem:projratioedgelen}, for any $\set{i,j,k,l}=\set{1,2,3,4}$
the ratio
$$d_{kl}:d_{jl}:d_{jk}$$
can be computed.
From this and the areas of the faces, all edge lengths can be computed using simple properties of triangles.
\end{proof}

\begin{prop}
\label{prop:2pi/nvalid}
The dihedral angles specified in
Table \ref{tab:2pi/n} form valid tetrahedra.
\end{prop}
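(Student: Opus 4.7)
The plan is to apply the edge-length recipe of Lemma \ref{lem:lengthfromangle} to each of the eleven entries in Table \ref{tab:2pi/n} and then invoke Lemma \ref{lem:edgecond} to certify that the resulting edge lengths define an honest tetrahedron. For the five rows in the \emph{Tiles} section, Section \ref{sec:prevtiles} already exhibits explicit tetrahedra with the stated dihedral angles (the three Sommerville tiles and two members of Goldberg's first family), so no separate verification is required for those. The work is to dispatch the six non-tile sextuples NT(A)--NT(F).

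For each of these six cases I would proceed in four steps. First, form the $4\times 4$ cosine matrix of Lemma \ref{lem:dihedralrelation} with $c_{ij}=\cos(2\pi/n_{ij})$ and check that its determinant vanishes; this is the algebraic compatibility condition for the specified angles to arise from a tetrahedron and the hypothesis needed before extracting an area vector from the null space. Second, compute a nonzero null-space vector and verify that its four entries share a common sign, so that after rescaling it can play the role of the area vector $(\abs{F_1},\abs{F_2},\abs{F_3},\abs{F_4})$ of a genuine convex tetrahedron. Third, apply Lemma \ref{lem:projratioedgelen} face by face to recover the edge-length ratios, and then use the face areas to pin down the six edge lengths up to a global scale. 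Finally, check that these edge lengths satisfy the triangle inequality on each face and that the Cayley-Menger determinant of Lemma \ref{lem:edgecond} is strictly positive; together these guarantee a valid tetrahedron, and the common value of the scale is fixed by normalizing to unit volume.

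The main obstacle is computational rather than conceptual: the cosines $\cos(2\pi/5)$ and $\cos(2\pi/10)$ appear in every non-tile row and yield unwieldy expressions in $\sqrt 5$, making hand verification impractical. A symbolic computer algebra computation (or sufficiently precise interval arithmetic, in the spirit of the reliable-arithmetic methods used elsewhere in the paper) suffices to check determinant vanishing, positivity of the null-space components, triangle inequalities, and positivity of the Cayley-Menger determinant. As an independent consistency check, the surface area values listed in the rightmost column of Table \ref{tab:2pi/n} can be recomputed from the constructed tetrahedra. Since every required condition is either an exact algebraic identity or a strict inequality bounded away from zero, the verification is robust to small numerical error and the computation can be carried out reliably.
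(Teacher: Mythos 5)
Your overall strategy matches the paper's: check the vanishing determinant of Lemma \ref{lem:dihedralrelation}, recover edge lengths via Lemma \ref{lem:lengthfromangle}, and certify them with the triangle inequalities and the Cayley--Menger determinant of Lemma \ref{lem:edgecond}, all by reliable computation. However, there is one genuine logical gap: you stop once you have produced a valid tetrahedron from the edge-length recipe, but you never verify that this tetrahedron actually has the dihedral angles you started with. The determinant condition is only a necessary condition on a sextuple of angles, and the recipe of Lemma \ref{lem:lengthfromangle} is a formal procedure that will output six numbers from any input sextuple satisfying it; those numbers can perfectly well pass the Cayley--Menger and triangle-inequality tests while the resulting tetrahedron has dihedral angles different from the prescribed ones. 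The paper closes this loop by feeding the computed edge lengths back into Lemma \ref{lem:computedihedral} and checking that the recovered dihedral angles coincide with the originals, explicitly noting that ``invalid dihedral angles may give rise to valid edge lengths.'' Your sign check on the null-space vector and your recomputation of the surface areas are sensible sanity checks, but neither substitutes for this round-trip verification, which is exactly what the proposition asserts: that tetrahedra with \emph{these} dihedral angles exist.

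A secondary, smaller point: each edge lies on two faces, so reconstructing the six edge lengths from the four face areas and the within-face ratios of Lemma \ref{lem:projratioedgelen} is an over-determined problem, and your write-up should say how consistency across the two faces sharing an edge is checked (or note that the round-trip angle verification subsumes it). Your treatment of the five tiling rows by pointing to the explicit constructions in Section \ref{sec:prevtiles} is a legitimate shortcut not taken by the paper, which runs all eleven rows through the same computation.
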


\begin{proof}
We check that the determinant condition in Lemma \ref{lem:dihedralrelation}
is satisfied symbolically using Mathematica.
This implies that the dihedral angles are exact.

To check that the dihedral angles form a valid tetrahedron,
we compute edge lengths as described in Lemma \ref{lem:lengthfromangle}
and use Lemma \ref{lem:edgecond} to verify that these edge lengths
form a tetrahedron.
Then Lemma \ref{lem:computedihedral}
allows us to compute the dihedral angles from edge lengths
to check that we get the original dihedral angles.
The last step is needed since invalid dihedral angles may give rise
to valid edge lengths.
The code for this computation is in the GitHub repository
in the folder \verb|2pi_over_n|.
\end{proof}

All tetrahedral tiles in Table \ref{tab:2pi/n} have been previously confirmed as tiles by Goldberg \cite{Gol}
(Sect. \ref{sec:prevtiles}).
The Sommerville tiles have the same name
as in the paper by Goldberg,
and the First Goldberg family refers to the first of the
three Goldberg infinite families.
Recall that by Proposition \ref{prop:goldberg12tile},
the First Goldberg family tiles face-to-face.

We now prove that Table \ref{tab:2pi/n} is exhaustive.
The approach is to first show that each dihedral angle cannot be very small,
and so the problem reduces to a finite search.
A computer program was written to perform the search, yielding
eleven candidate tetrahedra in Table \ref{tab:2pi/n}.
Finally, we show that exactly five of them tile space (face-to-face), as indicated in Table \ref{tab:2pi/n}.

\begin{lemma}
\label{lem:lessthan42}
If all dihedral angles of a tetrahedron are of the form
$\theta_{ij}=2\pi/n_{ij}$, then every $n_{ij}$ is less than 42.
\end{lemma}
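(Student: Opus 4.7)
The plan is to assume for contradiction that some $n_{ij}\geq 42$. By the symmetry of the Wirth inequalities (Lemma \ref{lem:dihedralineq}) under vertex relabeling, I may take $(i,j)=(1,2)$, so $\theta_{12}\leq 2\pi/42=\pi/21$. The strategy will be to show that shrinking $\theta_{12}$ squeezes both sums $\theta_{13}+\theta_{14}$ and $\theta_{23}+\theta_{24}$ into a narrow window around $\pi$, after which a small enumeration rules out either of them landing strictly below $\pi$ when each $\theta_{ij}$ has the form $2\pi/n_{ij}$ with integer $n_{ij}\geq 3$.

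First I will apply the vertex half of Lemma \ref{lem:dihedralineq} at $V_1$ and $V_2$ to obtain
$$\theta_{13}+\theta_{14}>\pi-\theta_{12},\qquad \theta_{23}+\theta_{24}>\pi-\theta_{12}.$$
Next, combining each of these with the edge inequality $\theta_{13}+\theta_{14}+\theta_{23}+\theta_{24}<2\pi$ from the same lemma gives the companion upper bounds $\theta_{13}+\theta_{14}<\pi+\theta_{12}$ and $\theta_{23}+\theta_{24}<\pi+\theta_{12}$. Since the edge inequality is strict, the two pair-sums cannot both be $\geq\pi$; swapping $V_1\leftrightarrow V_2$ if necessary, I may assume $\theta_{13}+\theta_{14}<\pi$. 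Substituting $\theta_{1k}=2\pi/n_{1k}$ then yields the key interval
$$\frac{1}{n_{13}}+\frac{1}{n_{14}}\in\paren{\frac{1}{2}-\frac{1}{n_{12}},\ \frac{1}{2}}\subseteq\paren{\tfrac{10}{21},\ \tfrac{1}{2}}.$$

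The proof will conclude with a short case check. Among integer pairs $(n_{13},n_{14})$ with each entry at least $3$, the only ones whose reciprocal sum exceeds $10/21$ are, up to swap, $(3,3),(3,4),(3,5),(3,6),$ and $(4,4)$, with reciprocal sums $2/3,7/12,8/15,1/2,1/2$; none lies strictly below $1/2$, contradicting the displayed interval and forcing $n_{12}<42$. By symmetry across all six edges, every $n_{ij}<42$. The only real subtlety I anticipate is carefully propagating strict inequalities so that the boundary value $1/2$, attained by $(3,6)$ and $(4,4)$, is genuinely excluded. The bound $42$ will be tight, since $\tfrac{1}{2}-\tfrac{1}{42}=\tfrac{10}{21}=\tfrac{1}{3}+\tfrac{1}{7}$ is precisely the next reciprocal sum below $\tfrac{1}{2}$ realizable by a pair of integers $\geq 3$.
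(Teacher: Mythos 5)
Your proof is correct and takes essentially the same route as the paper: both assume $n_{12}\geq 42$, use the two inequalities of Lemma \ref{lem:dihedralineq} to trap $\theta_{13}+\theta_{14}$ in the window $\left(\tfrac{20\pi}{21},\pi\right)$, and finish with a finite check. The only cosmetic difference is that the paper organizes the final check by casing on the larger of $\theta_{13},\theta_{14}$ being $2\pi/3$ or $2\pi/4$, whereas you enumerate the reciprocal pairs $(n_{13},n_{14})$ directly.
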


\begin{proof}
Suppose to the contrary that $n_{12}\geq 42$, so that
$\theta_{12}\leq2\pi/42$. We will arrive at a contradiction using inequalities
on the dihedral angles.

By Lemma \ref{lem:dihedralineq},
$\theta_{13}+\theta_{14}+\theta_{23}+\theta_{24}<2\pi.$
Therefore either $\theta_{13}+\theta_{14}<\pi$ or $\theta_{23}+\theta_{24}<\pi$.
Assume that $\theta_{13}+\theta_{14}<\pi$.
The other inequality in Lemma \ref{lem:dihedralineq} implies
$$\frac{2\pi}{42}+\theta_{13}+\theta_{14}\geq \theta_{12}+\theta_{13}+\theta_{14}>\pi,$$
so that $\theta_{13}+\theta_{14}> 20\pi/21$. Hence
\begin{equation}
\label{eq:impossibleangle}
\frac{20\pi}{21}<\theta_{13}+\theta_{14}<\pi.
\end{equation}
We will show that this last inequality is impossible.

Notice that one of $\theta_{13}$ and $\theta_{14}$ has to be greater than
$10\pi/21$, which is greater than $2\pi/5$.
Assume that $\theta_{13}>2\pi/5$.
So $\theta_{13}$ is either $2\pi/3$ or $2\pi/4$.
If $\theta_{13}=2\pi/3$, then \eqref{eq:impossibleangle} reduces to
$$\frac{2\pi}{7}<\theta_{14}<\frac{2\pi}{6},$$
which is a contradiction. On the other hand, if $\theta_{13}=2\pi/4$,
then \eqref{eq:impossibleangle} reduces to
$$\frac{2\pi}{5}<\frac{19\pi}{42}<\theta_{14}<\frac{2\pi}{4},$$
which is also a contradiction.
We conclude that it is impossible to have $n_{12}\geq 42$.
\end{proof}

\begin{theorem}
\label{thm:2pi/nall}
The only tetrahedra with dihedral angles of the form $2\pi/n$, where $n\in\mb{N}$,
are those identified in Table \ref{tab:2pi/n},
up to permutation of vertices (reflection included).
\end{theorem}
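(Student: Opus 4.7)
The plan is to reduce the problem to a finite exhaustive search. Since each dihedral angle of a tetrahedron lies strictly in $(0,\pi)$, writing $\theta_{ij}=2\pi/n_{ij}$ forces $n_{ij}\geq 3$; combined with Lemma \ref{lem:lessthan42}, each $n_{ij}\in\{3,4,\dots,41\}$. Thus there are at most $39^6$ sextuples $(n_{12},n_{13},n_{14},n_{23},n_{24},n_{34})$ to check, a finite (if large) search space that can be further cut by the $S_4$ action permuting vertex labels (reflection included, as Table \ref{tab:2pi/n} only lists tetrahedra up to this symmetry).

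I would prune this set using necessary conditions. First, apply Wirth's inequalities from Lemma \ref{lem:dihedralineq} at each vertex: $\theta_{ij}+\theta_{ik}+\theta_{il}>\pi$, and around each edge: $\theta_{ik}+\theta_{il}+\theta_{jk}+\theta_{jl}<2\pi$. These alone already exclude the vast majority of candidates. Second, for each surviving sextuple, evaluate the $4\times 4$ determinant identity of Lemma \ref{lem:dihedralrelation}, which must vanish for the angles to come from an actual tetrahedron.

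For the sextuples surviving both tests, I would then use the reconstruction of Lemma \ref{lem:lengthfromangle} to produce a candidate edge-length sextuple (up to scale), verify with the Cayley--Menger determinant of Lemma \ref{lem:edgecond} that these edges realize a tetrahedron, and finally recompute the dihedral angles via Lemma \ref{lem:computedihedral} to confirm that they match the prescribed $2\pi/n_{ij}$ values. The last check is essential because spurious solutions of the determinant condition could in principle yield edge lengths whose true dihedral angles are different. This chain of verifications is exactly what is already carried out for the eleven specific entries in the proof of Proposition \ref{prop:2pi/nvalid}; the present theorem asserts that no other sextuple survives.

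The main obstacle is reliability rather than mathematical depth: the search itself is elementary, but each filter must be implemented rigorously so that no valid candidate is discarded and no invalid one slips through. Because cosines of rational multiples of $\pi$ are algebraic, the determinant test can in principle be performed in exact arithmetic, and the subsequent edge-length/angle verifications can be done in certified interval arithmetic. The expected output of the search is precisely the eleven sextuples tabulated in Table \ref{tab:2pi/n} (up to vertex relabeling), which establishes the theorem.
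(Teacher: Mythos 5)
Your proposal matches the paper's proof essentially exactly: both reduce to a finite search over $n_{ij}\in\{3,\dots,41\}$ via Lemma \ref{lem:lessthan42}, prune with the inequalities of Lemma \ref{lem:dihedralineq} and the determinant condition of Lemma \ref{lem:dihedralrelation}, quotient by vertex permutations, and defer the edge-length/Cayley--Menger/angle-recomputation verification of the survivors to the chain already used in Proposition \ref{prop:2pi/nvalid}. The only difference is that you make explicit the need for exact or interval arithmetic, a point the paper leaves implicit.
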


\begin{proof}
By Lemma \ref{lem:lessthan42}, all dihedral angles are of the form
$2\pi/n$, where $3\leq n\leq 41$ is a natural number.
A computer search can be performed to find all sextuples
$(\theta_{12},\theta_{13},\theta_{14},\theta_{23},\theta_{24},\theta_{34})$
of numbers of this form which satisfy the determinant condition
in Lemma \ref{lem:dihedralrelation}. Moreover, the inequality
conditions in Lemma \ref{lem:dihedralineq} are used to narrow
down many superfluous possibilities. Finally, permutation of vertices
is taken into account to reduce thecut down search time and eliminate redundant results.
The final results are those in Table \ref{tab:2pi/n}.
The code for the computation is in the file \verb|2pi_over_n/search.m|.
\end{proof}

\begin{theorem}
\label{thm:2pi/ntile}
The only face-to-face tetrahedral tiles with dihedral angles of the form $2\pi/n$,
where $n\in\mb{N}$, are those indicated in Table \ref{tab:2pi/n},
up to permutation of vertices (reflection included).
\end{theorem}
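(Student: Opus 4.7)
The plan is as follows. The five tetrahedra claimed to tile in Table~\ref{tab:2pi/n} are all previously known to tile face-to-face: Sommerville No.~1--3 are in Sommerville~\cite{So} (see Section~\ref{sec:prevtiles}), while the two members of the first Goldberg family are covered by Proposition~\ref{prop:goldberg12tile}. It therefore suffices to show that each of the six remaining candidates NT(A)--NT(F) fails to tile face-to-face.

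For each non-tile candidate, I would first use Lemma~\ref{lem:lengthfromangle} to compute the six edge lengths $d_{ij}$, up to scaling, from the dihedral angles in the table. This fixes the partition of the edges by common length, and hence determines the collection of $d$-edge-length graphs from Section~\ref{sec:edgelengraphs}. I would then apply the necessary condition of Proposition~\ref{prop:edgegraphlincomb}: in a face-to-face tiling, from every node of every such graph there must exist a closed walk whose edge-labels $\{i_k,j_k\}$ satisfy $\sum_k \theta_{i_kj_k}=2\pi$. The objective in each of the six cases is to exhibit a node from which no admissible closed walk sums to $2\pi$, contradicting the assumption of a tiling.

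To keep the search for such walks finite, I would exploit the fact that every $\theta_{ij}$ in Table~\ref{tab:2pi/n} has the form $2\pi/n_{ij}$ with $n_{ij}\le 10$, so any label sequence summing to $2\pi$ satisfies $\sum_k 1/n_{i_kj_k}=1$ with at most ten terms. The integer solutions of this equation are few and explicit, and each candidate solution must then be tested for realizability as an actual closed walk in the appropriate graph, where the parity constraints noted in Remark~\ref{rem:closedwalk} may eliminate further possibilities (e.g.\ when the graph is a tree or carries an isolated 3-cycle).

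The main obstacle will be the bookkeeping. For each of the six candidates one must compute the edge lengths in exact form (the input data is small but the outputs involve nested trigonometric radicals), detect which of the $d_{ij}$ coincide, write down the corresponding edge-length graphs, and exhaustively eliminate every label-tuple summing to $2\pi$ that could be realized at some node. Since the list of candidates is short and all angles are explicit rational multiples of $\pi$, I would carry this out with rigorous computer arithmetic, analogous to the verifications in Proposition~\ref{prop:2pi/nvalid} and Theorem~\ref{thm:2pi/nall}, with the auxiliary scripts added to the \verb|2pi_over_n| folder of the companion repository.
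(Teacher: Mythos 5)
Your proposal matches the paper's proof in all essentials: both reduce the theorem to showing that NT(A)--NT(F) fail to tile (the five tiles being previously known) and derive the contradiction from the closed-walk condition on edge-length graphs computed via Lemma~\ref{lem:lengthfromangle}. The paper merely short-circuits your enumeration of unit-fraction decompositions by invoking Corollary~\ref{cor:edge2pi/n} directly: for each candidate it exhibits one edge length whose graph has no closed walk of odd length and whose only associated angle is $2\pi/n$ with $n$ odd, forcing that angle to be of the form $\pi/m$ --- exactly where your restricted search would terminate --- and it addresses the numerically uncertain edge-length coincidences by observing that the graphs remain free of odd closed walks in either case.
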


\begin{proof}
By Theorem \ref{thm:2pi/nall}, we only need to show that the six so-called non-tiling tetrahedra
in Table \ref{tab:2pi/n} indeed do not tile face-to-face. The main tool used is Corollary \ref{cor:edge2pi/n}.

From the dihedral angles of Table \ref{tab:2pi/n}, we can compute the edge lengths using the method outlined in Lemma \ref{lem:lengthfromangle}. Note that this computation is numerical, so we can only confirm that edge lengths are distinct and not that they are identical. Therefore, we must consider separately the case where all edges that we claim have the same length do indeed have the same length and the case where they in fact do not have the same length.

Suppose first that all edges that we claim to have the same length do indeed have the same length. 
The proof for each of the non-tiling tetrahedra in Table \ref{tab:2pi/n} is given in Table \ref{tab:edgegraphs}. 
For each tetrahedron, we identify an edge $e_{ij}$ of length $d$ for which all edges of length $d$ have the same associated dihedral angle of the form $2\pi/n$ for $n$ odd and the $d$-edge-length graph has no closed walk of odd length. 
However, by Corollary \ref{cor:edge2pi/n}, if the tetrahedron tiled then the associated angle $\theta_{ij}$ would be of the form $\pi/n$.
Therefore, these tetrahedra do not tile.

\begin{table}
  \caption{Edge-length graphs of all non-tiles identified in Table \ref{tab:2pi/n}. All the graphs have no closed walk of odd length, so by Corollary \ref{cor:edge2pi/n}, $\theta_{ij}$ should be $\pi/n$ for some $n$, which is not the case; hence these tetrahedra cannot tile.}
  \begin{center}
  \begin{tabular}{| l | p{2.5cm} | l | m{6cm} |}
    \hline
    Name &
    Symmetries &
    Angle $\theta_{ij}$ &
    $d$-Edge-Length Graph \\ \hline
    
    NT(A) &
    None &
    $\theta_{12}=2\pi/3$ &
    \pathlentwo{(d_{13},d_{23})}{(d_{14},d_{24})}{12}{.75}{1} \\ \hline
    
    NT(B) &
    $d_{13}=d_{14}\newline d_{23}=d_{24}$ &
    $\theta_{13}=\theta_{14}=2\pi/5$ &
    \pathlentwo{(d_{12},d_{23})}{(d_{14},d_{34})}{13=14}{1.5}{1} \\ \hline
    
    NT(C) &
    $d_{13}=d_{14}$ &
    $\theta_{12}=2\pi/3$ &
    \pathlentwo{(d_{13},d_{23})}{(d_{14},d_{24})}{12}{.75}{1} \\ \hline
    
    NT(D) &
    $d_{12}=d_{34}\newline d_{14}=d_{23}$ &
    $\theta_{12}=\theta_{34}=2\pi/3$ &
    \pathlentwo{(d_{13},d_{23})}{(d_{14},d_{24})}{12=34}{1.5}{1} \\ \hline
    
    NT(E) &
    None &
    $\theta_{23}=2\pi/5$ &
    \pathlentwo{(d_{12},d_{13})}{(d_{24},d_{34})}{23}{.75}{1} \\ \hline
    
    NT(F) &
    $d_{13}=d_{23}\newline d_{14}=d_{24}$ &
    $\theta_{13}=\theta_{23}=2\pi/5$ &
    \pathlentwo{(d_{12},d_{23})}{(d_{14},d_{34})}{13=23}{1.5}{1} \\ \hline
  \end{tabular}
  \end{center}
  \label{tab:edgegraphs}
\end{table}

Now, suppose that some edges that we claim to have the same length in fact have distinct lengths. 
We can easily check that even if some edges that we claim to have the same length in fact have distinct lengths,
the edge-length graphs will still have no closed walk of odd length.
Then by Corollary \ref{cor:edge2pi/n}, if the tetrahedron tiled then the associated angle $\theta_{ij}$ would be of the form $\pi/n$, so these tetrahedra do not tile.
\end{proof}




\section{Classification of Tetrahedra}
\label{sec:classification}
In Section \ref{sec:classification}, we classify all tetrahedra
by distinct edge lengths.
The tetrahedra are classified into 25 types, up to permutation of vertices,
as listed explicitly in Table \ref{tab:tetrahedratypes}.
Different letters signify different edge lengths.
The summary of these 25 types, categorized according to
the number of distinct edge lengths and the number of congruent faces and sorted into several groups, is given in Figure \ref{fig:tetrahedrontable}.

After writing up this classification into 25 types and sending a copy to Wirth, he kindly sent us his 2013 version \cite{WirNew}.

First, it is shown that all 25 types exist.

\begin{prop}
All 25 types of tetrahedra presented in Table \ref{tab:tetrahedratypes} occur.
\end{prop}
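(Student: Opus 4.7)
The plan is to exhibit, for each of the 25 types in Table \ref{tab:tetrahedratypes}, an explicit sextuple of edge lengths realizing the required pattern of equalities and inequalities, and to confirm validity via the conditions of Lemma \ref{lem:edgecond} (the triangle inequalities on all four faces together with strict positivity of the Cayley-Menger determinant $D$).

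First I would handle the highly symmetric types by canonical constructions. The regular tetrahedron (all edges of length $1$) realizes the unique type with one distinct edge length. For types with two or three distinct edge lengths, natural candidates include the isosceles (orthocentric) tetrahedra with three pairs of equal opposite edges, tetrahedra obtained by gluing two congruent isosceles triangles along a common base, and one-parameter families of orthoschemes. Each such candidate has a free parameter $t$ that can be chosen close to the regular value so that $D>0$ and all triangle inequalities hold by continuity from the regular tetrahedron.

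For the types with many distinct edge lengths, including the fully generic type with six distinct lengths, I would use a perturbation argument. The Cayley-Menger determinant and the triangle inequality expressions are continuous in the edge lengths and both are strictly satisfied at the regular tetrahedron, so any sufficiently small perturbation still yields a valid tetrahedron. For each target type I would start from the most symmetric type that specializes to it (i.e.\ one with additional equalities imposed) and perturb only those edges that the target type permits to be distinct; a generic such perturbation avoids any accidental new coincidences and produces precisely the desired equality pattern.

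The main obstacle is bookkeeping: for each of the 25 patterns one must identify a suitable symmetric starting point, specify a perturbation direction compatible with the pattern, and verify that no unintended equalities arise. This is straightforward but tedious. An alternative that avoids the continuity argument is to simply write down explicit numerical edge lengths for each of the 25 types and check positivity of $D$ directly using the formula in Lemma \ref{lem:edgecond}; either way the verifications reduce to a finite collection of polynomial inequalities in the edge lengths.
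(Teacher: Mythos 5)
Your proposal is correct and uses essentially the same argument as the paper: perturb the edge lengths of the regular tetrahedron to realize each of the 25 equality patterns, noting that the triangle inequalities and the positivity of the Cayley--Menger determinant in Lemma \ref{lem:edgecond} are open conditions and so persist under small perturbations. The additional machinery you describe (canonical symmetric starting points, explicit numerical witnesses) is not needed; the paper's proof is just the two-sentence perturbation argument.
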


\begin{proof}
By Lemma \ref{lem:edgecond}, any sextuple of edge lengths satisfying
certain inequalities forms a tetrahedron. Then the edge lengths
of a regular tetrahedron
can be adjusted slightly to make the tetrahedron belong to any of the 25 types,
and the edge lengths will still satisfy the inequalities.
\end{proof}

\begin{prop}
\label{prop:25typescomplete}
The 25 types of tetrahedra presented in Table \ref{tab:tetrahedratypes} are complete.
\end{prop}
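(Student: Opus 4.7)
The plan is to perform a systematic combinatorial enumeration. A tetrahedron's type in this classification is determined entirely by the partition of its six edges into equivalence classes of equal length, and two such partitioned edge-sextuples yield the same type precisely when they differ by a relabeling of vertices, i.e., by the natural action of $S_4$ on the edge set $E(K_4)$. So the proposition reduces to counting $S_4$-orbits of partitions of the six edges of $K_4$.

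I would proceed by cases on the integer partition $\lambda \vdash 6$ recording the multi-set of class sizes. There are eleven such partitions: $(6)$, $(5,1)$, $(4,2)$, $(4,1,1)$, $(3,3)$, $(3,2,1)$, $(3,1,1,1)$, $(2,2,2)$, $(2,2,1,1)$, $(2,1,1,1,1)$, and $(1^6)$. For each $\lambda$ I enumerate the $S_4$-orbits of ways to realize it as a labeled edge-coloring of $K_4$. To carry this out it is convenient to first classify unlabeled subgraphs of $K_4$ up to $S_4$-isomorphism by edge count: the 2-edge subgraphs fall into 2 types (perfect matching or incident pair), the 3-edge subgraphs into 3 types (triangle, path of length 3, claw $K_{1,3}$), and the complementary sizes follow by taking edge-complements in $K_4$. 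Then for each $\lambda$ I pick a representative subgraph for the largest color class, compute its stabilizer in $S_4$, and recursively enumerate colorings of the complementary edge set modulo this stabilizer, also modding out by permutations of color classes of equal size.

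The main bookkeeping obstacle is the partitions with repeated part sizes, notably $(3,3)$, $(4,1,1)$, $(3,1,1,1)$, $(2,2,2)$, $(2,2,1,1)$, and $(2,1,1,1,1)$, where distinct-looking configurations may be identified by swapping color classes of equal size. A complementation check, exploiting the bijection between $S_4$-types of $k$-edge subgraphs and those of $(6-k)$-edge subgraphs, provides a useful sanity check in these cases; Burnside's lemma applied to the stabilizer action gives an independent tally that can be compared term by term.

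After summing the orbit counts across all eleven partitions, I would verify that the total equals 25 and match each case to its entry in Table \ref{tab:tetrahedratypes}, thereby confirming that the table captures every possible edge-length pattern exactly once and hence that the classification is complete.
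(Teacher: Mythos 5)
Your proposal is correct and takes essentially the same approach as the paper, which classifies tetrahedra by the ``lengths partition'' of the six edges and then enumerates arrangements up to vertex permutation, deferring the details to Wirth and Dreiding. Your write-up simply makes explicit the orbit-counting (via $S_4$-stabilizers of the subgraph types of $K_4$) that the paper dismisses as ``easy to check.''
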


\begin{proof}
Our proof is the same as Wirth and Dreiding's \cite[Sect. 1]{WirNew}.
We consider the number of distinct edge lengths and how many times each length occurs,
which Wirth and Dreiding call the \emph{lengths partition}. 
For each lengths partition, several arrangements of edge lengths into tetrahedra can be found; 
it is easy to check that these arrangements are the only ones possible and are unique up to rotation, translation, scaling, and reflection, resulting in the complete list of 25 total types of tetrahedra presented in Table \ref{tab:tetrahedratypes}.
\end{proof}

\begin{prop}
\label{prop:OPsommerville14}
The Sommerville Nos. 1--4 are the only tiles of the ten types in the \emph{orientation-preserving group} of Figure \ref{fig:tetrahedrontable}.
\end{prop}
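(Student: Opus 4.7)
The plan is to apply Edmonds' \cite{Ed} completion of Sommerville's classification, which states that the only face-to-face orientation-preserving tetrahedral tiles are Sommerville Nos.\ 1--4 together with Sommerville (xi). The key reduction is that every tetrahedron in one of the ten types comprising the orientation-preserving group is achiral: its edge-length labeling is invariant under some improper symmetry of the underlying abstract tetrahedron (e.g.\ a transposition of two vertices fixing the other two), so the tetrahedron admits a plane of symmetry and is congruent to its mirror image. First I would verify this by a direct case check on the ten relevant types in Table \ref{tab:tetrahedratypes}.

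Achirality forces every face-to-face tiling by such a tile $T$ to be orientation-preserving in the sense of Definition \ref{def:orientation-preserving}: if a congruence carrying one tile $T_1$ to another $T_2$ happens to be orientation-reversing, post-composing with $T$'s internal reflection symmetry yields an orientation-preserving congruence that still sends $T_1$ to $T_2$. Edmonds' theorem then restricts the candidate tiles in these ten types to Sommerville Nos.\ 1--4 and Sommerville (xi). To eliminate (xi), I would observe that its edge-length data $d_{12}=d_{23}=d_{34}=p$, $d_{13}=d_{14}=d_{24}=q$ admits only the rotation $(1\,4)(2\,3)$ as a nontrivial symmetry --- a check of each of the six vertex transpositions shows none preserves the labeling --- so (xi) has no plane of symmetry, is chiral, and lies outside the orientation-preserving group, consistent with Prop.\ \ref{prop:somxi_tile}.

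Finally, a direct look-up shows that each of Sommerville Nos.\ 1--4 does lie in one of the ten orientation-preserving types, and Sommerville's original constructions exhibit them as tiles. The principal obstacle is the initial combinatorial identification: verifying type-by-type from Table \ref{tab:tetrahedratypes} that each of the ten OP-group types consists of achiral tetrahedra, while correctly distinguishing them from chiral types whose only nontrivial symmetries are proper rotations (as with (xi)).
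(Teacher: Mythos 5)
Your argument is correct and takes essentially the same route as the paper's proof: the paper likewise observes that the ten types of the orientation-preserving group are their own reflections, so by Definition \ref{def:orientation-preserving} any face-to-face tiling by them is orientation-preserving, and then invokes the Sommerville--Edmonds classification. Your extra step handling Sommerville (xi) is harmless but not needed, since its type (the Edmonds group, type (c)) is not among the ten types in question and, per Proposition \ref{prop:somxi_tile}, it tiles only in a non-orientation-preserving manner anyway.
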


\begin{proof}
It is easy to check that the ten types of the orientation-preserving group are their own reflections. 
Therefore by Definition \ref{def:orientation-preserving} these ten types can only tile in an orientation-preserving manner. 
An incomplete proof by Sommerville \cite{Solist}, later completed by
Edmonds \cite{Ed}, shows that the Sommerville No. 1--4 are the only orientation-preserving tiles.
\end{proof}

\begin{figure}[p!]
\includegraphics[angle=90, width = .9\textwidth]{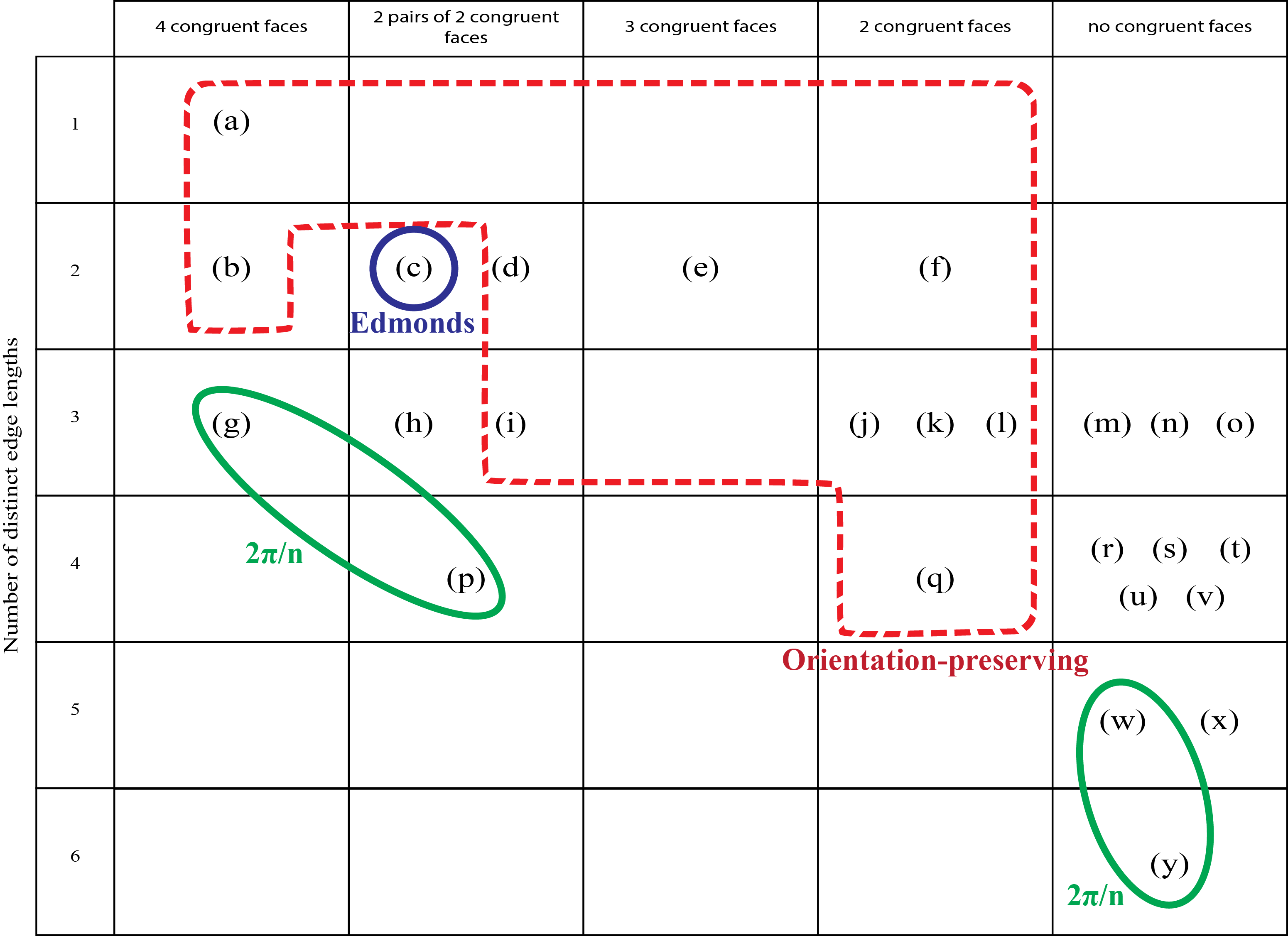}
\caption{Classification of all tetrahedra into 25 types, sorted by number of distinct edge lengths and number of congruent faces. The only tiles in the \emph{orientation-preserving group} are the Sommerville Nos. 1--4 (Prop. \ref{prop:OPsommerville14}). The \emph{Edmonds group} contains one tile (Prop. \ref{prop:somxi_tile}). The \emph{$2\pi/n$ group} contains no tiles (Thm. \ref{thm:2pingp_notile}). The rest, referred to as non-characterized, may have incomplete lists of tiles and are considered in Sections \ref{sec:10types}--\ref{sec:remainingcases}.}
\label{fig:tetrahedrontable}
\end{figure}
\clearpage

\newcolumntype{L}[1]{>{\raggedright\arraybackslash}m{#1}}
\newcolumntype{C}[1]{>{\centering\arraybackslash}m{#1}}

\begin{longtable}{| C{1cm} | C{4cm} | L{3cm} | L{6cm} |}	
\caption{Classification of tetrahedra into 25 types.}
    \label{tab:tetrahedratypes} \\ \hline
    Type 
    & Picture 
    & Group 
    & Tiles 
    \\ \hline
    \endfirsthead 
    
    \hline
    Type 
    & Picture 
    & Group 
    & Tiles 
    \\ \hline \endhead
    
    (a) 
    & \tetrahedron{a}{a}{a}{a}{a}{a}{1.15} 
    & Orientation-preserving 
    & Does not tile.
    \\ \hline
    
    (b) 
    & \tetrahedron{a}{b}{a}{a}{b}{a}{1.15}
    & Orientation-preserving 
    & Tiles only if Sommerville No. 1 ($a/b=\sqrt{3}/2$). 
    \\ \hline
    
    (c) 
    & \tetrahedron{a}{a}{b}{b}{b}{a}{1.15} 
    & Edmonds 
    & Tiles only if $a/b=\sqrt{2/3}$ or $\sqrt{3/2}$ by Edmonds \cite{Ed}.
    \\ \hline
    
    (d) 
    & \tetrahedron{a}{a}{a}{a}{a}{b}{1.15} 
    & Orientation-preserving 
    & Does not tile.
    \\ \hline
    
    (e) 
    & \tetrahedron{a}{a}{a}{b}{b}{b}{1.15} 
    & Orientation-preserving 
    & Does not tile. 
    \\ \hline
    
    (f) 
    & \tetrahedron{b}{a}{b}{a}{a}{a}{1.15} 
    & Orientation-preserving 
    & Does not tile. 
    \\ \hline
    
    (g) 
    & \tetrahedron{a}{b}{c}{c}{b}{a}{1.15} 
    & $2\pi/n$ 
    & Does not tile. 
    \\ \hline
    
    (h) 
    & \tetrahedron{a}{b}{c}{c}{b}{b}{1.15} 
    & Non-characterized
    & Tiles if in first Goldberg family ($c^2=b^2+a^2/3$).
    \\ \hline
    
    (i) 
    & \tetrahedron{a}{b}{a}{a}{c}{a}{1.15} 
    & Orientation-preserving 
    & Does not tile.
    \\ \hline
    
    (j) 
    & \tetrahedron{a}{a}{a}{b}{c}{b}{1.15} 
    & Orientation-preserving 
    & Tiles only if Sommerville No. 3 ($a:b:c=\sqrt{3}:2:2\sqrt{2}$)
    or No. 4 ($a:b:c=\sqrt{5}/2:\sqrt{3}:2$).
    \\ \hline
    
    (k) 
    & \tetrahedron{b}{c}{b}{a}{a}{a}{1.15} 
    & Orientation-preserving 
    & Does not tile.
    \\ \hline
    
    (l) 
    & \tetrahedron{a}{b}{b}{c}{c}{a}{1.15} 
    & Orientation-preserving 
    & Does not tile.
    \\ \hline
    
    (m) 
    & \tetrahedron{a}{b}{c}{a}{a}{a}{1.15} 
    & Non-characterized 
    & 
    \\ \hline
    
    (n) 
    & \tetrahedron{a}{b}{a}{a}{c}{b}{1.15} 
    & Non-characterized
    & 
    \\ \hline
    
    (o) 
    & \tetrahedron{a}{b}{c}{a}{c}{b}{1.15}
    & Non-characterized
    & 
    \\ \hline
    
    (p) 
    & \tetrahedron{a}{c}{b}{b}{d}{a}{1.15} 
    & $2\pi/n$ 
    & Does not tile.
    \\ \hline
    
    (q) 
    & \tetrahedron{a}{c}{b}{a}{d}{b}{1.15}
    & Orientation-preserving 
    & Tiles only if Sommerville No. 2 ($a:b:c:d=\sqrt{3}:\sqrt{2}:2:1$).
    \\ \hline
    
    (r) 
    & \tetrahedron{a}{a}{a}{b}{c}{d}{1.15} 
    & Non-characterized
    &
    \\ \hline
    
    (s) 
    & \tetrahedron{a}{b}{c}{d}{d}{d}{1.15} 
    & Non-characterized
    & 
    \\ \hline
    
    (t) 
    & \tetrahedron{a}{b}{a}{a}{c}{d}{1.15} 
    & Non-characterized
    & 
    \\ \hline
    
    (u) 
    & \tetrahedron{a}{a}{b}{b}{c}{d}{1.15} 
    & Non-characterized
    & 
    \\ \hline
    
    (v) 
    & \tetrahedron{a}{b}{c}{a}{c}{d}{1.15} 
    & Non-characterized 
    & Tiles if in second Goldberg family
    ($a^2 = c^2 + d^2/3$, $b^2 = a^2+ 4d^2/3$),
    including Baumgartner T2 ($a:b:c:d=\sqrt{3}:2:\sqrt{11}/2:\sqrt{3}/2$). 
    \\ \hline
    
    (w) 
    & \tetrahedron{b}{a}{c}{d}{a}{e}{1.15} 
    & $2\pi/n$ 
    & Does not tile.
    \\ \hline
    
    (x) 
    & \tetrahedron{a}{b}{c}{a}{d}{e}{1.15} 
    & Non-characterized
    & 
    \\ \hline
    
    (y) 
    & \tetrahedron{a}{b}{c}{d}{e}{f}{1.15} 
    & $2\pi/n$ 
    & Does not tile.
    \\ \hline
\end{longtable}

We now show that tetrahedra of the $2\pi/n$ group---types (g), (p), (w), and (y)---do not tile (face-to-face).
We first need the following lemmas describing the symmetries
of tetrahedra of types (g) and (p).

\begin{lemma}
\label{lem:typegsymmetry}
For a tetrahedron of type (g), the opposite dihedral angles
are equal: $\theta_{12}=\theta_{34}$,
$\theta_{13}=\theta_{24}$, and $\theta_{14}=\theta_{23}$.
\end{lemma}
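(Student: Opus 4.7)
The plan is to exploit the built-in symmetry of a type (g) tetrahedron, in which the three pairs of opposite edges satisfy $d_{12}=d_{34}$, $d_{13}=d_{24}$, and $d_{14}=d_{23}$.

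First I would consider the vertex permutation $\sigma=(1\,2)(3\,4)$. A quick check of the six edges shows that $\sigma$ preserves every edge length: $e_{12}$ and $e_{34}$ are each mapped to themselves, while the pairs $e_{13}\leftrightarrow e_{24}$ and $e_{14}\leftrightarrow e_{23}$ are swaps between edges that already have the same length. Because a tetrahedron is determined by its edge lengths up to rigid motion, $\sigma$ is realized by an isometry of $\mathbb{R}^3$ that carries the tetrahedron to itself. Since dihedral angles are invariants of isometries, the identifications $e_{13}\leftrightarrow e_{24}$ and $e_{14}\leftrightarrow e_{23}$ yield $\theta_{13}=\theta_{24}$ and $\theta_{14}=\theta_{23}$. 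I would then repeat the argument with $\tau=(1\,4)(2\,3)$, which likewise preserves every edge length, fixes $e_{14}$ and $e_{23}$, and swaps $e_{12}\leftrightarrow e_{34}$ together with $e_{13}\leftrightarrow e_{24}$; the induced isometry produces the remaining identity $\theta_{12}=\theta_{34}$.

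There is no real obstacle here; the entire proof is a symmetry observation. The only point worth care is justifying that a vertex permutation preserving all edge lengths actually lifts to a rigid motion of $\mathbb{R}^3$, which follows from the standard rigidity of tetrahedra (consistent with the Cayley--Menger computation of Lemma \ref{lem:edgecond}). As a sanity check, one can substitute the six edge lengths directly into Lemma \ref{lem:computedihedral} and verify that $D_{12}=D_{34}$, $D_{13}=D_{24}$, and $D_{14}=D_{23}$ by inspection; since the four faces of a type (g) tetrahedron all have side lengths $\{a,b,c\}$ and are mutually congruent, every $|F_i|$ is equal, so the denominators in the formula for $\cos\theta_{ij}$ automatically match across opposite edges.
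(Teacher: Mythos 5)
Your proof is correct and follows essentially the same route as the paper: the paper likewise applies the double transposition swapping $V_1\leftrightarrow V_4$ and $V_2\leftrightarrow V_3$ (which preserves all edge lengths of type (g)) to conclude $\theta_{12}=\theta_{34}$, with the other equalities by analogous permutations. Your added justification that a length-preserving vertex permutation lifts to a rigid motion is a fine bit of extra care, but the argument is the same symmetry observation.
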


\begin{proof}
Permute the vertices so that vertices $V_2$ and $V_3$ are switched
and vertices $V_1$ and $V_4$ are switched.
The resulting tetrahedron is identical to the original.
Hence $\theta_{12}=\theta_{34}$.
The other equalities follow from analogous reasoning.
\end{proof}

\begin{lemma}
\label{lem:typepsymmetry}
For a tetrahedron of type (p),
$\theta_{12}=\theta_{34}$ and $\theta_{14}=\theta_{23}$.
\end{lemma}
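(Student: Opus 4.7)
The plan is to exhibit an isometric self-correspondence of the tetrahedron obtained by a permutation of the vertex labels, just as in the proof of Lemma \ref{lem:typegsymmetry}. Writing out the edge-length data from Table \ref{tab:tetrahedratypes} for type (p) gives $d_{12}=a$, $d_{13}=c$, $d_{14}=b$, $d_{23}=b$, $d_{24}=d$, $d_{34}=a$. The goal is to find a permutation $\sigma$ of $\{1,2,3,4\}$ such that $d_{\sigma(i)\sigma(j)}=d_{ij}$ for all $i\neq j$, which by the congruence of tetrahedra with identical edge-length data forces $\theta_{\sigma(i)\sigma(j)}=\theta_{ij}$.

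First I would check the candidate permutation that swaps $V_1 \leftrightarrow V_3$ and $V_2 \leftrightarrow V_4$ simultaneously. Under this permutation, $d_{12}\mapsto d_{34}$, $d_{13}\mapsto d_{31}=d_{13}$, $d_{14}\mapsto d_{32}=d_{23}$, $d_{23}\mapsto d_{41}=d_{14}$, $d_{24}\mapsto d_{42}=d_{24}$, and $d_{34}\mapsto d_{12}$. Substituting the values above, each edge is mapped to an edge of the same length: the pair $(d_{12},d_{34})=(a,a)$ is fixed as a set, $(d_{14},d_{23})=(b,b)$ is fixed as a set, and the remaining edges $e_{13}$ and $e_{24}$ are each sent to themselves. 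Hence the permutation is a length-preserving symmetry of the tetrahedron.

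Once this is established, the conclusion follows immediately: the permutation carries the edge $e_{12}$ to $e_{34}$, so the dihedral angles along them are equal, giving $\theta_{12}=\theta_{34}$; similarly it carries $e_{14}$ to $e_{23}$, giving $\theta_{14}=\theta_{23}$.

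There is no real obstacle here beyond bookkeeping; the only thing to be careful about is reading the edge-length data correctly off the picture macro in Table \ref{tab:tetrahedratypes} and verifying that the chosen permutation is indeed an involution of the edge multiset. Unlike type (g), the symmetry group of type (p) is smaller, so we should expect exactly these two equalities and no more, which matches the statement of the lemma.
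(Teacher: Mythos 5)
Your proposal is correct and is essentially the paper's own proof: the paper likewise applies the vertex permutation swapping $V_1\leftrightarrow V_3$ and $V_2\leftrightarrow V_4$, observes that the resulting tetrahedron is identical to the original, and concludes $\theta_{12}=\theta_{34}$ and $\theta_{14}=\theta_{23}$. Your version just makes the edge-length bookkeeping explicit, which the paper leaves to the reader.
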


\begin{proof}
Permute the vertices so that vertices $V_1$ and $V_3$ are switched
and vertices $V_2$ and $V_4$ are switched.
The resulting tetrahedron is identical to the original one.
Therefore $\theta_{12}=\theta_{34}$ and $\theta_{14}=\theta_{23}$.
\end{proof}

The following theorem shows that tetrahedra in the $2\pi/n$ group, types (g), (p), (w), and (y), do not tile.

\begin{theorem}
\label{thm:2pingp_notile}
A tetrahedron of type (g), (p), (w) or (y) does not tile (face-to-face).
\end{theorem}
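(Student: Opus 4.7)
The plan is to show that a tiling tetrahedron of any of types (g), (p), (w), (y) must have all six dihedral angles of the form $2\pi/n$ with $n\in\mathbb{N}$, so that by Theorem~\ref{thm:2pi/ntile} it must coincide with one of the five tile entries of Table~\ref{tab:2pi/n}; one then verifies that none of those five tiles is of type (g), (p), (w), or (y). For type (y), every edge length is a singleton and Corollary~\ref{cor:edge2pi/n} applies directly to each edge to yield $\theta_{ij}=2\pi/n_{ij}$. For type (g), Lemma~\ref{lem:typegsymmetry} supplies the equalities $\theta_{12}=\theta_{34}$, $\theta_{13}=\theta_{24}$, $\theta_{14}=\theta_{23}$, so the two edges sharing each of the three lengths have a common dihedral angle and Corollary~\ref{cor:edge2pi/n} again applies; type (p) is the identical story via Lemma~\ref{lem:typepsymmetry} for the repeated lengths $a,b$ and Corollary~\ref{cor:edge2pi/n} for the singleton lengths $c,d$.

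Type (w) is the delicate case, since the unique repeated length $a$ sits on the opposite edges $e_{13}$ and $e_{24}$ with no prescribed equality between $\theta_{13}$ and $\theta_{24}$. The four singleton lengths $b,c,d,e$ are handled as above, and in fact each singleton's edge-length graph is a single edge between two distinct nodes (hence bipartite with no odd closed walk), so the second half of Corollary~\ref{cor:edge2pi/n} forces those four dihedral angles to be of the form $\pi/n$. For the length $a$ I would compute the $a$-edge-length graph explicitly from the definition: $e_{13}$ contributes the graph edge $(b,d)$--$(c,e)$ with label $\{1,3\}$, and $e_{24}$ contributes $(b,c)$--$(d,e)$ with label $\{2,4\}$. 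Since $b,c,d,e$ are distinct in a genuine type (w) tetrahedron, these four nodes are distinct and the graph splits into two disjoint single-edge components. Every closed walk therefore lies in one component, alternates between its two nodes, and uses only one label throughout, so it has even length $2k$ and yields $2k\,\theta_{13}=2\pi$ or $2k\,\theta_{24}=2\pi$, placing both $\theta_{13}$ and $\theta_{24}$ in $2\pi/n$ form.

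Once all six dihedral angles of each of the four types are pinned to $2\pi/n$ form, Theorem~\ref{thm:2pi/ntile} restricts the tetrahedron to the five tiles of Table~\ref{tab:2pi/n}, and Proposition~\ref{prop:dihedralfixes} together with previous literature (and Table~\ref{tab:tetrahedratypes}) identifies these tiles as type (b) for Sommerville No.~1, type (q) for Sommerville No.~2, type (j) for Sommerville No.~3, and type (h) for the two First Goldberg entries---none of which is (g), (p), (w), or (y), completing the proof. The principal obstacle in this plan is the type (w) analysis: in particular, verifying that the $a$-edge-length graph really does split into two bipartite components in the genuine (w) case and that consequently no odd closed walk can arise to break the $2\pi/n$ conclusion for $\theta_{13}$ and $\theta_{24}$.
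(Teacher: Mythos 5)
Your proposal is correct and follows essentially the same route as the paper: symmetry lemmas for (g) and (p), direct application of Corollary~\ref{cor:edge2pi/n} for (y), a disconnectedness argument for the $a$-edge-length graph of (w) forcing every closed walk to use only one of the labels $13$ or $24$, and then Theorem~\ref{thm:2pi/ntile} plus inspection of Table~\ref{tab:2pi/n}. The only cosmetic discrepancy is in type (w): by the paper's definition each edge of length $a$ contributes two graph edges (one per ordering of the node pairs), so the graph is four single-edge components on eight nodes rather than two on four, but this does not affect your conclusion.
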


\begin{proof}
First, we show that all dihedral angles of a tetrahedral tile
of these types must be of the form $2\pi/n$.
For types (g), (p), and (w), the edge-length graph is used to prove this result.

\begin{longtable}{| C{1cm} | L{5cm} |}
\caption{Edge-length graphs of tetrahedra of types (g), (p), and (w).
For edge lengths that do not appear in this table,
there is only one edge of that length, and the corresponding
edge-length graph consists of two nodes connected by an edge.}
\label{tab:2pi/nedgelengraphs} \\ 
\hline 
Type &
Edge-Length Graph(s)
\\ \hline \endfirsthead

\hline
Type &
Edge-Length Graph(s)
\\ \hline \endhead

(g) &
\newline For edge length $a$:
\pathlentwo{(b,c)}{(c,b)}{12=34}{1.25}{1}
\newline \newline
For edge length $b$:
\pathlentwo{(a,c)}{(c,a)}{13=24}{1.25}{1}
\newline \newline
For edge length $c$:
\pathlentwo{(a,b)}{(b,a)}{14=23}{1.25}{1}
\\ \hline

(p) &
\newline For edge length $a$:
\pathlentwo{(b,c)}{(d,b)}{12=34}{1.25}{1}
\pathlentwo{(b,d)}{(c,b)}{12=34}{1.25}{1}
\newline \newline
For edge length $b$:
\pathlentwo{(a,c)}{(d,a)}{14=23}{1.25}{1}
\pathlentwo{(a,d)}{(c,a)}{14=23}{1.25}{1}
\\ \hline

(w) &
\newline For edge length $a$:
\pathlentwo{(b,d)}{(c,e)}{13}{.75}{1}
\pathlentwo{(d,b)}{(e,c)}{13}{.75}{1}
\pathlentwo{(b,c)}{(d,e)}{24}{.75}{1}
\pathlentwo{(c,b)}{(e,d)}{24}{.75}{1}
\\ \hline
\end{longtable}

\begin{itemize}
\item Type (g): 
By Lemma \ref{lem:typegsymmetry},
$\theta_{12}=\theta_{34}$,
$\theta_{13}=\theta_{24}$, and $\theta_{14}=\theta_{23}$.
By Corollary \ref{cor:edge2pi/n}, every dihedral angle is of the form $2\pi/n$. 

\item Type (p):
By Lemma \ref{lem:typepsymmetry},
$\theta_{12}=\theta_{34}$ and $\theta_{14}=\theta_{23}$.
By Corollary \ref{cor:edge2pi/n}, every dihedral angle is of the form $2\pi/n$. 

\item Type (w):
From the $a$-edge-length graph, any closed walk in the graph may only pass through
one of the labels 13 or 24, but not both. That is, 
the two edges of common length cannot coincide in a tiling.
Then the all dihedral angles of the tetrahedron must be of the form $2\pi/n$.

\item Type (y):
Every edge has a distinct length, so every dihedral angle
is of the form $2\pi/n$ by Corollary \ref{cor:edge2pi/n}.
\end{itemize}

By Theorem \ref{thm:2pi/ntile}, a tetrahedral tile of these four types must be in Table \ref{tab:2pi/n}.
By inspection, tetrahedral tiles in Table \ref{tab:2pi/n}
are not of these four types;
hence, a tetrahedron of any of these four types does not tile.
\end{proof}

Section \ref{sec:classification} is summarized by the following theorem, which states that the Sommerville No. 1 minimizes surface area among the tiles of all completely characterized types.

\begin{theorem}
\label{thm:15types_sono1best}
The Sommerville No. 1 uniquely minimizes surface area among all tiles of the orientation-preserving, Edmonds, and $2\pi/n$ groups of Figure \ref{fig:tetrahedrontable}.
\end{theorem}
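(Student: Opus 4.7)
The plan is to enumerate all tilers in each of the three named groups and then invoke Corollary \ref{cor:prevknown_so1best}. First, Theorem \ref{thm:2pingp_notile} already states that no tetrahedron of type (g), (p), (w), or (y) tiles, so the $2\pi/n$ group contributes no candidates whatsoever. Second, Proposition \ref{prop:OPsommerville14} shows that among the ten orientation-preserving types, the only face-to-face tilers are the Sommerville Nos.\ 1--4. Third, the Edmonds group is a single type, namely type (c) (Sommerville (xi)), and Proposition \ref{prop:somxi_tile} pins down its tilers exactly: those with $p/q = \sqrt{2/3}$ or $\sqrt{3/2}$ (which give the same tetrahedron up to swapping the roles of $p$ and $q$).

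Next I would observe that every tetrahedron produced above is among the ``previously identified tetrahedral tiles'' to which Corollary \ref{cor:prevknown_so1best} applies. The Sommerville Nos.\ 1--4 are explicit there; and by Proposition \ref{prop:prevtetrahedra}(5), the tiling Sommerville (xi) tetrahedron is the specialization of Goldberg's first family with $3a=c$, hence also falls under the corollary's hypothesis. Corollary \ref{cor:prevknown_so1best} then gives the uniqueness claim: the Sommerville No.\ 1 strictly minimizes surface area over this entire list.

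The one thing I would make sure to spell out is that Sommerville (xi) is not coincidentally equal to Sommerville No.\ 1 within the first Goldberg family, so that the strict minimization of Theorem \ref{thm:goldbergmin} at $a = 1/3$ really does beat it; the relation $3a = c$ with $d_{12} = 1$ forces $a = 1/\sqrt{6}$, distinct from the minimizer $a = 1/3$. Assembling these facts, the minimizer across the three groups is uniquely the Sommerville No.\ 1. I do not foresee a genuine obstacle here; the theorem is essentially a bookkeeping corollary that collects Propositions \ref{prop:OPsommerville14}, \ref{prop:somxi_tile}, Theorem \ref{thm:2pingp_notile}, and Corollary \ref{cor:prevknown_so1best} in one place.
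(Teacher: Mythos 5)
Your proposal is correct and follows essentially the same route as the paper: cite Theorem \ref{thm:2pingp_notile}, Proposition \ref{prop:OPsommerville14}, and Proposition \ref{prop:somxi_tile} to conclude that every tile in these three groups is previously identified, then apply Corollary \ref{cor:prevknown_so1best}. Your extra check that the tiling Sommerville (xi) (at $a=1/\sqrt{6}$ in the first Goldberg family) does not coincide with the Sommerville No.\ 1 (at $a=1/3$) is a welcome bit of care for the uniqueness claim, but it is the same argument.
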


\begin{proof}
By Propositions \ref{prop:somxi_tile} and \ref{prop:OPsommerville14} and Theorem \ref{thm:2pingp_notile}, the tiles of the orientation-preserving, Edmonds, and $2\pi/n$ groups of Figure \ref{fig:tetrahedrontable} are among those identified in previous literature. Then by Corollary \ref{cor:prevknown_so1best}, the Sommerville No. 1 uniquely minimizes surface area among the tiles of these groups.
\end{proof}

It remains to be shown that the Sommerville No. 1 minimizes surface area among the non-characterized ten types of Figure \ref{fig:tetrahedrontable}, which are examined in Sections \ref{sec:10types}--\ref{sec:remainingcases}.

\section{Previously Non-Characterized Ten Types}
\label{sec:10types}
In Section \ref{sec:10types}, we deduce some necessary conditions on the ten non-characterized types defined in Section \ref{sec:classification}
in Figure \ref{fig:tetrahedrontable}.
We will use these
and the isoperimetric conditions of Section \ref{sec:boundiso}
to eliminate isoperimetric candidates of these ten types in
Section \ref{sec:code}.

First, we set up
linear systems on dihedral angles of the 10 non-characterized types
based on their edge-length graphs (Table \ref{tab:10typesedgelengraphs}).

\begin{prop}
The 10 non-characterized types have the edge-length graphs
and linear combinations of dihedral angles as indicated in
Table \ref{tab:10typesedgelengraphs}.
\end{prop}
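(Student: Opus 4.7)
The plan is a direct, type-by-type verification relying only on the definition of the edge-length graph together with Corollary \ref{cor:edgegraphlincomb}. For each of the ten non-characterized types---namely (h), (m), (n), (o), (r), (s), (t), (u), (v), (x) in Table \ref{tab:tetrahedratypes}---I would carry out the same three-step routine.

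Step one: read off the edge-length labeling $e_{ij}\mapsto d_{ij}$ directly from the picture for that type in Table \ref{tab:tetrahedratypes}. Step two: for each distinct edge length $d$, list all edges $e_{ij}$ of length $d$, and for each such edge insert into the $d$-edge-length graph an edge labeled $\{i,j\}$ from the ordered pair $(d_{ik},d_{jk})$ to $(d_{il},d_{jl})$. This is mechanical bookkeeping; collating the results for all $d$ yields the graph drawings claimed in Table \ref{tab:10typesedgelengraphs}. Step three: for each graph, pick a convenient closed walk (often the shortest cycle, or else a loop) and apply Corollary \ref{cor:edgegraphlincomb} to write down a nonnegative integer linear combination $\sum n_{ij}\theta_{ij}=2\pi$. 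When a single dihedral angle $\theta(d)$ is associated to \emph{all} edges of length $d$, the simpler Corollary \ref{cor:edge2pi/n} yields an equation of the form $n\,\theta(d)=2\pi$, which is how the tighter constraints for the more symmetric types arise.

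The only subtle point---and it is clerical rather than mathematical---concerns care when drawing the graphs. First, the definition uses \emph{ordered} pairs, so for an edge $e_{ij}$ of length $d$ one must distinguish $(d_{ik},d_{jk})$ from $(d_{jk},d_{ik})$; these are different nodes unless the two entries coincide. Second, when $(d_{ik},d_{jk})=(d_{il},d_{jl})$ the prescribed graph edge becomes a loop, which is the source of the short linear relations appearing for the more symmetric types. If the same unordered pair of nodes carries several labels, all of them must be recorded, since any label is a legitimate choice at each traversal in a closed walk (Corollary \ref{cor:edgegraphlincomb}). Once these pitfalls are avoided, every row of Table \ref{tab:10typesedgelengraphs} reduces to a direct definitional check followed by reading off a convenient closed walk; the main ``difficulty'' is simply the sheer bulk of the case analysis across ten types, which is precisely why the result is presented as a table rather than as a linear argument.
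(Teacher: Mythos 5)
Your overall plan coincides with the paper's: the graphs themselves are a mechanical consequence of the definition in Section \ref{sec:edgelengraphs}, and the linear relations come from closed walks via Proposition \ref{prop:edgegraphlincomb}. However, your proposal leaves unproved exactly the parts of Table \ref{tab:10typesedgelengraphs} that carry the real content used later in the code. The table does not merely assert that \emph{some} relation $\sum n_{ij}\theta_{ij}=2\pi$ holds; it asserts parity constraints on the coefficients (e.g., for type (o) that $n_{12}$ and $n_{23}$ are both even, for type (m) that $n_{12},n_{23},n_{24}$ share a parity while $n_{34}$ is even). These do not follow from ``picking a convenient closed walk'' --- the walk is dictated by the (unknown) tiling, not chosen by you --- but from the combinatorial structure of the graph constraining \emph{every} closed walk, which is the content of Remark \ref{rem:closedwalk} (in a tree every closed walk traverses each edge an even number of times; the special 3-cycle configuration forces the three traversal counts to be all even or all odd). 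Your proposal never invokes this. Relatedly, several table entries claim an angle is of the form $\pi/n$, not merely $2\pi/n$; your appeal to Corollary \ref{cor:edge2pi/n} cites only the conclusion $n\,\theta(d)=2\pi$, whereas the $\pi/n$ statements require its second clause, namely that the relevant graph has no closed walk of odd length, which must be verified graph by graph.

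A second omission: for type (h) the table asserts the equalities $\theta_{13}=\theta_{24}$ and $\theta_{14}=\theta_{23}$. These are not consequences of the edge-length graph at all; they come from a vertex-permutation symmetry of the tetrahedron (the permutation swapping $V_1\leftrightarrow V_2$ and $V_3\leftrightarrow V_4$ preserves the edge-length assignment of type (h)), exactly as in Lemma \ref{lem:typegsymmetry}. Your three-step routine has no step that would produce these equalities, so as written the proposal does not establish the full statement of the table.
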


\begin{proof}
The edge-length graphs are constructed as described in Section \ref{sec:edgelengraphs}.
We can deduce the linear combinations by considering
closed walks in the graph using Proposition \ref{prop:edgegraphlincomb}.
Parities of the coefficients follow from Remark \ref{rem:closedwalk}.
For type (h), equalities of dihedral angles follow from the same argument
as in Lemma \ref{lem:typegsymmetry}.
\end{proof}

\begin{longtable}{| C{1cm} | L{7cm} | L{8cm} |}
\caption{Edge-length graphs for the (previously) non-characterized types, whose tiling properties are not entirely understood.
For edge lengths that do not appear in this table,
there is only one edge of that length, and the corresponding
edge-length graph has no closed walk of odd length.}
\label{tab:10typesedgelengraphs} \\ \hline
Type &
Edge-Length Graph(s) &
Linear Systems
\\ \hline \endfirsthead

\hline
Type &
Edge-Length Graph(s) &
Linear Systems
\\ \hline \endhead
 
(h) &
\newline For edge length $b$:
\pathlenfour{(a,c)}{(c,b)}{(b,c)}{(c,a)}{13=24}{34}{13=24}{1}{.7}{.4}{1}{1}
\newline \newline
For edge length $c$:
\pathlenthree{(a,b)}{(b,b)}{(b,a)}{14=23}{14=23}{1.4}{1.3}{1}
& $\theta_{12}$ is of the form $\pi/n$.
\newline
\newline $n_{13}\theta_{13}+n_{34}\theta_{34}=2\pi$.
\newline $\theta_{13}=\theta_{24}$.
\newline $n_{13}$ and $n_{34}$ are even.
\newline
\newline $\theta_{14}=\theta_{23}$ is of the form $\pi/n$. 
\\ \hline

(m) &
\newline For edge length $a$:
\begin{tikzpicture}[scale=1,transform shape]
\draw [black] (0,0) node [anchor=north] {$(a,a)$}
-- node [anchor=west] {23}
(0,1.25) node [anchor=south] {$(a,b)$};
\draw [black] (-.5,-.1)
-- node [anchor=north east] {34}
(-2, 1.25) node [anchor=south] {$(b,c)$};
\draw [black] (.6,1.65)
-- node [anchor=south] {12} (1.35,1.65);
\draw [black] (.5,-.1)
-- node [anchor=north west] {24}
(2,1.25) node[anchor=south] {$(a,c)$};
\draw [black] (0,-.75)
-- node [anchor=west] {23}
(0,-2) node[anchor=north] {$(b,a)$};
\draw [black] (-.5,-.65)
-- node [anchor=south east] {24}
(-2, -2) node [anchor=north] {$(c,a)$};
\draw [black] (-1.375,-2.5)
-- node [anchor=north] {12} (-.625,-2.5);
\draw [black] (.5,-.65)
-- node [anchor=south west] {34}
(2,-2) node [anchor=north] {$(c,b)$};
\end{tikzpicture}
& $\theta_{13}$ and $\theta_{14}$ are of the form $\pi/n$.
\newline
\newline $n_{12}\theta_{12} + n_{23}\theta_{23} + n_{24}\theta_{24} + n_{34}\theta_{34} = 2\pi$.
\newline $n_{12}, n_{23},$ and $n_{24}$ have the same parity. $n_{34}$ is even.
\\ \hline
 
(n) &
\newline For edge length $a$:
\begin{tikzpicture}[scale=1,transform shape]
\draw [black] (0,0) node [anchor=east] {$(b,c)$}
-- node [anchor=south] {23}
(1,0) node [anchor=west] {$(b,a)$};
\draw [black] (2.3,0)
-- node [anchor=south] {12}
(3.25,0) node [anchor=west] {$(a,c)$};
\draw [black] (3.9,-.35)
-- node [anchor=west] {14}
(3.9,-1.35) node [anchor=north] {$(b,b)$};
\draw [black] (3.9,-2)
-- node [anchor=west] {14}
(3.9,-3) node [anchor=north] {$(c,a)$};
\draw [black] (3.25,-3.5)
-- node [anchor=south] {12}
(2.4,-3.5) node [anchor=east] {$(a,b)$};
\draw [black] (1,-3.5)
-- node [anchor=south] {23}
(0,-3.5) node [anchor=east] {$(c,b)$};
\end{tikzpicture}
\newline \newline
For edge length $b$:
\pathlenfive{(a,c)}{(b,a)}{(a,a)}{(a,b)}{(c,a)}{34}{13}{13}{34}{1.4}{.75}{.7}{.4}{.75}{1}
& $\theta_{24}$ is of the form $\pi/n$.
\newline
\newline $n_{12}\theta_{12} + n_{14}\theta_{14} + n_{23}\theta_{23}= 2\pi$.
\newline $n_{12}, n_{14}$, and $n_{23}$ are even.
\newline
\newline $n_{13}\theta_{13} + n_{34}\theta_{34} = 2\pi$.
\newline $n_{13}$ and $n_{34}$ are even.
\\ \hline
 
(o) &
\newline For edge length $a$:
\pathlenfive{(b,c)}{(b,a)}{(c,c)}{(a,b)}{(c,b)}{23}{12}{12}{23}{1.4}{.75}{.7}{.4}{.75}{1}
\newline \newline For edge length $b$:
\pathlenfive{(a,c)}{(b,c)}{(a,a)}{(c,b)}{(c,a)}{34}{13}{13}{34}{1.4}{.75}{.7}{.4}{.75}{1}
\newline \newline For edge length $c$:
\pathlenfive{(a,b)}{(a,c)}{(b,b)}{(c,a)}{(b,a)}{24}{14}{14}{24}{1.4}{.75}{.7}{.4}{.75}{1}
 & $n_{12}\theta_{12} + n_{23}\theta_{23} = 2\pi$.
\newline $n_{12}$ and $n_{23}$ are even.
\newline
\newline $n_{13}\theta_{13} + n_{34}\theta_{34} = 2\pi$.
\newline $n_{13}$ and $n_{34}$ are even.
\newline
\newline $n_{14}\theta_{14} + n_{24}\theta_{24} = 2\pi$.
\newline $n_{14}$ and $n_{24}$ are even.
\\ \hline
 
(r) &
\newline For edge length $a$:
\trianglegraph{(a,c)}{(a,b)}{(a,d)}{12}{13}{14}{.5}{.4}{.6}{.75}{.3}{.15}{1}
\trianglegraph{(c,a)}{(b,a)}{(d,a)}{12}{13}{14}{.5}{.4}{.6}{.75}{.3}{.15}{1}
& $\theta_{23}$, $\theta_{24}$, and $\theta_{34}$ are of the form $\pi/n$.
\newline
\newline $n_{12}\theta_{12} + n_{13}\theta_{13} + n_{14}\theta_{14}= 2\pi$.
\newline $n_{12}, n_{13},$ and $n_{14}$ have the same parity.
\\ \hline
 
(s) &
\newline For edge length $d$:
\begin{tikzpicture}[scale=1,transform shape]
\draw [black] (0,0) node[anchor=north] {$(d,d)$}
-- node [anchor=west] {24}
(0,.75) node [anchor=south] {$(a,c)$};
\draw [black] (-.5,-.1)
-- (-1.5, .75) node [anchor=south] {$(a,b)$};
\node [anchor=east] at (-1,.2) {23};
\draw [black] (.5,-.1)
-- (1.5,.75) node [anchor=south] {$(b,c)$};
\node [anchor=west] at (1,.2) {34};
\draw [black] (0,-.75)
-- node [anchor=west] {24}
(0,-1.5) node [anchor=north] {$(c,a)$};
\draw [black] (-.5,-.65)
-- (-1.5, -1.5) node [anchor=north] {$(c,b)$};
\node [anchor=east] at (-1,-.9) {34};
\draw [black] (.5,-.65)
-- (1.5,-1.5) node [anchor=north] {$(b,a)$};
\node [anchor=west] at (1,-.9) {23};
\end{tikzpicture}
 & $\theta_{12}$, $\theta_{13}$, and $\theta_{14}$ are of the form $\pi/n$.
\newline
\newline $n_{23}\theta_{23} + n_{24}\theta_{24} + n_{34}\theta_{34}= 2\pi$.
\newline $n_{23}, n_{24},$ and $n_{34}$ are even.
\\ \hline
 
(t) & \newline
For edge length $a$:
\pathlenfour{(d,c)}{(b,a)}{(a,c)}{(b,d)}{23}{12}{14}{.75}{.7}{.4}{.75}{1}
\pathlenfour{(c,d)}{(a,b)}{(c,a)}{(d,b)}{23}{12}{14}{.75}{.7}{.4}{.75}{1}
& $\theta_{13}$, $\theta_{24}$, and $\theta_{34}$ are of the form $\pi/n$.
\newline
\newline $n_{12}\theta_{12} + n_{14}\theta_{14} + n_{23}\theta_{23}= 2\pi$.
\newline $n_{12}, n_{14},$ and $n_{23}$ are even.
\\ \hline
 
(u) & \newline
For edge length $a$:
\pathlenthree{(b,c)}{(a,b)}{(b,d)}{12}{13}{1.4}{.75}{1}
\pathlenthree{(c,b)}{(b,a)}{(d,b)}{12}{13}{1.4}{.75}{1}
\newline \newline
For edge length $b$:
\pathlentwo{(a,c)}{(a,d)}{14}{.75}{1}
\pathlentwo{(c,a)}{(d,a)}{14}{.75}{1}
\pathlenthree{(c,d)}{(a,a)}{(d,c)}{23}{23}{1.4}{.75}{1}
& $\theta_{24}$ and $\theta_{34}$ are of the form $\pi/n$.
\newline
\newline $n_{12}\theta_{12} + n_{13}\theta_{13}= 2\pi$.
\newline $n_{12}$ and $n_{13}$ are even.
\newline
\newline $\theta_{14}$ and $\theta_{23}$
are of the form $\pi/n$.
\\ \hline
 
(v) &
\newline For edge length $a$:
\pathlenfive{(c,d)}{(a,b)}{(c,c)}{(b,a)}{(d,c)}{23}{12}{12}{23}{1.4}{.75}{.7}{.4}{.75}{1}
\newline \newline
For edge length $c$:
\pathlenthree{(b,d)}{(a,c)}{(a,d)}{14}{24}{1.4}{.75}{1}
\pathlenthree{(d,b)}{(c,a)}{(d,a)}{14}{24}{1.4}{.75}{1}
& $\theta_{13}$ and $\theta_{34}$ are of the form $\pi/n$.
\newline
\newline $n_{12}\theta_{12} + n_{23}\theta_{23} = 2\pi$.
\newline $n_{12}$ and $n_{23}$ are even.
\newline
\newline $n_{14}\theta_{14} + n_{24}\theta_{24} = 2\pi$.
\newline $n_{14}$ and $n_{24}$ are even.
\\ \hline
 
(x) & \newline
For edge length $a$:
\pathlenthree{(d,e)}{(a,b)}{(d,c)}{23}{12}{1.4}{.75}{1}
\pathlenthree{(e,d)}{(b,a)}{(c,d)}{23}{12}{1.4}{.75}{1}
& $\theta_{13}$, $\theta_{14}$, $\theta_{24}$, and $\theta_{34}$ are of the form $\pi/n$.
\newline
\newline $n_{12}\theta_{12} + n_{23}\theta_{23}= 2\pi$.
\newline $n_{12}$ and $n_{23}$ are even.
\\ \hline
\end{longtable}

The vanishing determinant condition on the dihedral angles
given in Lemma \ref{lem:dihedralrelation} can be strengthened
given that some edge lengths are equal.
The strengthened condition for nine of the ten previously non-characterized types is presented in Table \ref{tab:10typesdetcond}.
Type (u) is omitted because we will not use this condition in the code in Section \ref{sec:code}; 
in the code, it is treated as a subcase of type (x).

First we show how equality of edge lengths can be used to derive
relationships between face areas and dihedral angles.

\begin{lemma}
\label{lem:equaledgetofaceratio}
In a tetrahedron, for $\set{i,j,k,l}=\set{1,2,3,4}$, if $d_{ij}=d_{ik}$, then
$$\abs{F_k}\sin\theta_{ij}=\abs{F_j}\sin\theta_{ik}.$$
\end{lemma}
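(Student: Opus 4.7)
The plan is to obtain this identity as a direct consequence of Lemma~\ref{lem:projratioedgelen} applied to the face $F_l$, which contains exactly the three edges $e_{ij}$, $e_{ik}$, and $e_{jk}$. All that is needed is a careful choice of the dummy indices in that earlier lemma.

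In Lemma~\ref{lem:projratioedgelen}, rename the dummy indices $(i,j,k,l)$ to $(i_0,j_0,k_0,l_0)$ and specialize to the assignment $i_0=l,\ j_0=i,\ k_0=j,\ l_0=k$. Under this relabeling the lemma reads
$$d_{jk}:d_{ik}:d_{ij} \;=\; \abs{F_i}\sin\theta_{jk}\,:\,\abs{F_j}\sin\theta_{ik}\,:\,\abs{F_k}\sin\theta_{ij}.$$
The hypothesis $d_{ij}=d_{ik}$ equates the last two entries on the left, so the last two entries on the right must also be equal, giving
$$\abs{F_j}\sin\theta_{ik} \;=\; \abs{F_k}\sin\theta_{ij},$$
which is the desired conclusion.

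There is essentially no obstacle here beyond bookkeeping: one must only remember the convention that $e_{ij}$ lies between faces $F_k$ and $F_l$ and that $\theta_{ij}$ is the dihedral angle along that edge. The geometric content is simply that the quantity $\abs{F_k}\sin\theta_{ij}$ equals $\tfrac12 h_l\, d_{ij}$, where $h_l$ is the height of the tetrahedron from vertex $V_l$, so equal values of $d_{ij}$ and $d_{ik}$ force $\abs{F_k}\sin\theta_{ij}=\abs{F_j}\sin\theta_{ik}$ by cancellation of the common height $h_l$ — the same projection identity already packaged in Lemma~\ref{lem:projratioedgelen}.
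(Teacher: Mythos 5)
Your proof is correct and matches the paper's own argument: the paper likewise derives the identity by specializing Lemma \ref{lem:projratioedgelen} to get $d_{ij}:d_{ik}=\abs{F_k}\sin\theta_{ij}:\abs{F_j}\sin\theta_{ik}$ and then invoking $d_{ij}=d_{ik}$. Your index bookkeeping and the closing remark about cancelling the common height $h_l$ are exactly the content of the cited lemma and its proof.
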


\begin{proof}
By Lemma \ref{lem:projratioedgelen},
$$d_{ij}:d_{ik}=\abs{F_k}\sin\theta_{ij}:\abs{F_j}\sin\theta_{ik}.$$
So the desired equality holds given that $d_{ij}=d_{ik}$.
\end{proof}

\begin{prop}
\label{prop:matrixcorrect}
The 10 non-characterized types have the face area relationships 
as given in Table \ref{tab:10typesdetcond},
and the matrix for each type
has nontrivial null space.
\end{prop}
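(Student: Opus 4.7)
The plan is to verify the stated relationships directly from the two tools already in hand: Lemma \ref{lem:dihedralrelation}, which gives four projection identities expressing each face area as a cosine-weighted sum of the other three (or equivalently a single vanishing $4\times 4$ determinant), and Lemma \ref{lem:equaledgetofaceratio}, which says that whenever two edges sharing a common vertex have the same length, say $d_{ij}=d_{ik}$, then $|F_k|\sin\theta_{ij}=|F_j|\sin\theta_{ik}$. For each of the ten non-characterized types, the edge-length pattern encoded in Table \ref{tab:tetrahedratypes} tells us exactly which pairs of edges are forced to be equal, so the construction of the face-area system is entirely mechanical.

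First I would walk through each type in turn, enumerate all pairs of equal edges that share a vertex, and record the corresponding sine identity from Lemma \ref{lem:equaledgetofaceratio}. Combining these with the projection relation from Lemma \ref{lem:dihedralrelation} yields a homogeneous linear system on the face-area vector $(|F_1|,|F_2|,|F_3|,|F_4|)^T$, and the matrix appearing in the corresponding row of Table \ref{tab:10typesdetcond} is precisely its coefficient matrix (up to clearing denominators and reordering rows). Verifying that the table is correct is then a matter of matching indices between edge labels and the sine/cosine entries row by row.

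For the null space claim, no separate rank computation is required. The area vector $(|F_1|,|F_2|,|F_3|,|F_4|)^T$ is nonzero since every face of a nondegenerate tetrahedron has strictly positive area, and it lies in the null space of every row of the matrix by construction: Lemma \ref{lem:dihedralrelation} places it in the null space of the projection row, while Lemma \ref{lem:equaledgetofaceratio} does the same for each equal-edge identity. So the null space automatically contains a nonzero vector, hence is nontrivial, regardless of how many equal-edge relations the type admits.

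The main obstacle is bookkeeping rather than mathematics: when two edges are equal one must recognize whether they share a vertex (for example $e_{12}$ and $e_{13}$, which share $V_1$) or are opposite (for example $e_{12}$ and $e_{34}$, which have no common vertex). Only the former produce a new row via Lemma \ref{lem:equaledgetofaceratio}; the latter contribute nothing of this form and must be skipped, which is relevant in types such as (h), (o), (v) and (u) where opposite edges coincide in length. Once this distinction is consistently applied across all ten types, the verification reduces to a routine check that each tabulated matrix row is produced by exactly one of the two lemmas, and the nontriviality of the null space follows uniformly.
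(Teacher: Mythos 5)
Your derivation of the sine relations from Lemma \ref{lem:equaledgetofaceratio} matches the paper's, but two points do not go through as written. The main one: you have misidentified the matrices in Table \ref{tab:10typesdetcond}. They are not the coefficient matrix of the stacked homogeneous system on $(\abs{F_1},\abs{F_2},\abs{F_3},\abs{F_4})^T$ (which would be $m\times 4$); they are $4\times n$ matrices with $n\le 3$ whose columns are sine-weighted combinations of the columns $C_i$ of the matrix in Lemma \ref{lem:dihedralrelation}. The area vector is a $4$-vector and cannot literally lie in the null space of a $4\times 2$ or $4\times 3$ matrix, so your closing argument does not typecheck against the objects the proposition is actually about. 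The step you need --- and the one the paper takes --- is to note that the face-area relationships force the area vector to be a positive combination of $n$ fixed vectors with sine-product entries, and to substitute this into $\abs{F_1}C_1+\cdots+\abs{F_4}C_4=0$. For type (x), for instance, $\abs{F_1}=\mu\sin\theta_{12}$ and $\abs{F_3}=\mu\sin\theta_{23}$ give $\mu\paren{\sin\theta_{12}C_1+\sin\theta_{23}C_3}+\abs{F_2}C_2+\abs{F_4}C_4=0$, so the nonzero vector $(\mu,\abs{F_2},\abs{F_4})$ lies in the null space of the tabulated $4\times 3$ matrix. The repair is mechanical, but it is the actual content of the null-space claim.

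Second, your rule of simply skipping equal opposite edges leaves you unable to produce the relations $\abs{F_1}=\abs{F_2}$ and $\abs{F_3}=\abs{F_4}$ that the table lists for type (h). These do not come from Lemma \ref{lem:equaledgetofaceratio} at all: they hold because the corresponding faces have the same multiset of edge lengths ($\{b,b,c\}$ and $\{a,b,c\}$ respectively) and are therefore congruent. The paper handles this by direct inspection as a separate case; without those two equalities your procedure cannot collapse the type (h) matrix down to the single column $\sin\theta_{13}(C_1+C_2)+\sin\theta_{34}(C_3+C_4)$ given in the table.
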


\begin{proof}
The face area relationships can be derived by
applying Lemma \ref{lem:equaledgetofaceratio} to
adjacent edges of equal lengths specified by the types.
For type (h), the fact that $\abs{F_1}=\abs{F_2}$
and $\abs{F_3}=\abs{F_4}$ follows from direct inspection
of edge lengths.
Finally, because the vector $(\abs{F_1}, \abs{F_2}, \abs{F_3}, \abs{F_4})^T$
is in the null space of the matrix in Lemma \ref{lem:dihedralrelation},
these face area relationships imply that the matrices in the table have nontrivial null space.
\end{proof}

\begin{longtable}{| C{1cm} | L{5.6cm} | L{9cm} |}
\caption{Strengthened determinant conditions for the (previously) non-characterized types, whose tiling properties are not entirely understood.
The $C_i$ refers to column $i$ of the matrix in Lemma \ref{lem:dihedralrelation}. The matrices for the strengthened determinant condition all have nontrivial null space.}
\label{tab:10typesdetcond} \\ \hline
Type &
Face Area Relationships &
Strengthened Determinant Condition
\\ \hline \endfirsthead

\hline
Type &
Face Area Relationships &
Strengthened Determinant Condition
\\ \hline \endhead
 
(h) &
$\abs{F_1}=\abs{F_2}$,
\newline $\abs{F_3}=\abs{F_4}$,
\newline $\abs{F_1}\sin\theta_{34}=\abs{F_4}\sin\theta_{13}$.
&
\newline
$\begin{pmatrix}
\vline \\
\sin\theta_{13}(C_1+C_2) + \sin\theta_{34}(C_3+C_4) \\
\vline
\end{pmatrix}$
\\ \hline

(m) &
$\abs{F_1}\sin\theta_{23}=\abs{F_3}\sin\theta_{12}$,
\newline $\abs{F_2}\sin\theta_{34}=\abs{F_3}\sin\theta_{24}
=\abs{F_4}\sin\theta_{23}$.
&
\newline
$\begin{pmatrix}
\vline \\
\sin\theta_{12}\sin\theta_{34}C_1 + \sin\theta_{23}\sin\theta_{24}C_2 \\
+ \sin\theta_{23}\sin\theta_{34}C_3 + \sin\theta_{24}\sin\theta_{34}C_4 \\
\vline
\end{pmatrix}$
\\ \hline
 
(n) &
$\abs{F_1}\sin\theta_{23}=\abs{F_3}\sin\theta_{12}$,
$\abs{F_2}\sin\theta_{14}=\abs{F_4}\sin\theta_{12}$,
$\abs{F_1}\sin\theta_{34}=\abs{F_4}\sin\theta_{13}$.
&
\newline
$\begin{pmatrix}
\vline \\
\sin\theta_{12}\sin\theta_{13}\sin\theta_{14}C_1 \\
+ \sin^2\theta_{12}\sin\theta_{34}C_2 \\
+ \sin\theta_{13}\sin\theta_{14}\sin\theta_{23}C_3 \\
+ \sin\theta_{12}\sin\theta_{14}\sin\theta_{34}C_4 \\
\vline
\end{pmatrix}$
\\ \hline
 
(o) &
$\abs{F_1}\sin\theta_{23}=\abs{F_3}\sin\theta_{12}$,
$\abs{F_1}\sin\theta_{34}=\abs{F_4}\sin\theta_{13}$,
$\abs{F_1}\sin\theta_{24}=\abs{F_2}\sin\theta_{14}$.
&
\newline
$\begin{pmatrix}
\vline \\
\sin\theta_{12}\sin\theta_{13}\sin\theta_{14}C_1 \\
+ \sin\theta_{12}\sin\theta_{13}\sin\theta_{24}C_2 \\
+ \sin\theta_{23}\sin\theta_{13}\sin\theta_{14}C_3 \\
+ \sin\theta_{12}\sin\theta_{34}\sin\theta_{14}C_4 \\
\vline
\end{pmatrix}$
\\ \hline
 
(r) &
$\abs{F_2}\sin\theta_{13}=\abs{F_3}\sin\theta_{12}$,
$\abs{F_3}\sin\theta_{14}=\abs{F_4}\sin\theta_{13}$.
&
\newline
$\begin{pmatrix}
\vline & \vline \\
C_1 & \sin\theta_{12}C_2+\sin\theta_{13}C_3+\sin\theta_{14}C_4 \\
\vline & \vline
\end{pmatrix}$
\\ \hline
 
(s) &
$\abs{F_2}\sin\theta_{34}=\abs{F_3}\sin\theta_{24}
=\abs{F_4}\sin\theta_{23}$.
&
\newline
$\begin{pmatrix}
\vline & \vline \\
\vline & \sin\theta_{23}\sin\theta_{24}C_2 \\
C_1 &+ \sin\theta_{23}\sin\theta_{34}C_3 \\
\vline & +\sin\theta_{24}\sin\theta_{34}C_4 \\
\vline & \vline
\end{pmatrix}$
\\ \hline
 
(t) &
$\abs{F_1}\sin\theta_{23}=\abs{F_3}\sin\theta_{12}$,
$\abs{F_2}\sin\theta_{14}=\abs{F_4}\sin\theta_{12}$.
&
\newline
$\begin{pmatrix}
\vline & \vline \\
\sin\theta_{12}C_1 & \sin\theta_{12}C_2 \\
+\sin\theta_{23}C_3 & +\sin\theta_{14}C_4 \\
\vline & \vline
\end{pmatrix}$
\\ \hline
 
(u) & -
& -
\\ \hline
 
(v) &
$\abs{F_1}\sin\theta_{23}=\abs{F_3}\sin\theta_{12}$,
$\abs{F_1}\sin\theta_{24}=\abs{F_2}\sin\theta_{14}$.
&
\newline
$\begin{pmatrix}
\vline & \vline \\
\sin\theta_{12}\sin\theta_{14}C_1 & \vline \\
+\sin\theta_{12}\sin\theta_{24}C_2 & C_4 \\
+\sin\theta_{23}\sin\theta_{14}C_3 & \vline \\
\vline & \vline
\end{pmatrix}$
\\ \hline
 
(x) &
$\abs{F_1}\sin\theta_{23}=\abs{F_3}\sin\theta_{12}$.
&
\newline
$\begin{pmatrix}
\vline & \vline & \vline \\
\sin\theta_{12}C_1 + \sin\theta_{23}C_3 & C_2 & C_4 \\
\vline & \vline & \vline
\end{pmatrix}$
\\ \hline
\end{longtable}

\smallskip

\section{Isoperimetry}
\label{sec:boundiso}
In Section \ref{sec:boundiso}, we establish bounds on
the dihedral angles of a tetrahedron which can beat the Sommerville No. 1, which will be used in code in Section \ref{sec:code}.
First we need a lemma relating the volume of a tetrahedron
to the areas of its faces.

\begin{lemma}
\label{lem:volformula}
Let $V$ be the volume of a tetrahedron.
For $\set{i,j,k,l}=\set{1,2,3,4}$, if we let
$\theta_{ijk}$ to be the angle $V_iV_jV_k$, then
$$V^2=\frac{2}{9}\abs{F_j}\abs{F_k}\abs{F_l}
\sin\theta_{ij}\sin\theta_{ik}\sin\theta_{jik}.$$
\end{lemma}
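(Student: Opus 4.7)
The plan is to derive the formula by taking $F_l$ as the base of the tetrahedron, expressing the height in two ways using adjacent faces, and then eliminating the edge lengths using the area of the base face.

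First I would write $V=\tfrac{1}{3}\abs{F_l}\,h_l$, where $h_l$ is the distance from $V_l$ to the plane containing $F_l=V_iV_jV_k$. To relate $h_l$ to a dihedral angle, consider the edge $e_{ij}$, which is shared by the two faces $F_k$ and $F_l$ meeting at dihedral angle $\theta_{ij}$. If $p$ denotes the perpendicular distance from $V_l$ to the line $e_{ij}$ inside face $F_k$, then $h_l=p\sin\theta_{ij}$; and since $F_k$ is the triangle $V_iV_jV_l$, its area gives $p=2\abs{F_k}/d_{ij}$. Combining these yields
$$V\,d_{ij}=\tfrac{2}{3}\abs{F_k}\abs{F_l}\sin\theta_{ij}.$$

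Next I would repeat this argument with the edge $e_{ik}$, which is shared by faces $F_j$ and $F_l$ with dihedral angle $\theta_{ik}$, obtaining the companion identity
$$V\,d_{ik}=\tfrac{2}{3}\abs{F_j}\abs{F_l}\sin\theta_{ik}.$$
Multiplying these two identities gives
$$V^2\,d_{ij}d_{ik}=\tfrac{4}{9}\abs{F_j}\abs{F_k}\abs{F_l}^2\sin\theta_{ij}\sin\theta_{ik}.$$

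Finally, since face $F_l$ is the triangle $V_iV_jV_k$ with sides $d_{ij},d_{ik}$ meeting at the face angle $\theta_{jik}$ at $V_i$, we have $\abs{F_l}=\tfrac{1}{2}d_{ij}d_{ik}\sin\theta_{jik}$, so $d_{ij}d_{ik}=2\abs{F_l}/\sin\theta_{jik}$. Substituting and simplifying produces the stated formula. The argument is essentially bookkeeping once the right base face is chosen; the only potential pitfall is keeping track of which pair of faces meets at which edge (recall that edge $e_{ij}$ lies between faces $F_k$ and $F_l$, not $F_i$ and $F_j$), so I would verify the incidences carefully before invoking $\theta_{ij}$ and $\theta_{ik}$.
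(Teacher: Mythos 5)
Your proof is correct and follows essentially the same strategy as the paper's: obtain $V^2$ by multiplying two base-times-height expressions for the volume, convert in-plane altitudes to heights via the sine of the dihedral angles $\theta_{ij}$ and $\theta_{ik}$, and finish with $\abs{F_l}=\tfrac12 d_{ij}d_{ik}\sin\theta_{jik}$. The only (cosmetic) difference is that you express the single height $h_l$ over base $F_l$ in two ways, whereas the paper multiplies the identities $V=\tfrac13 h_j\abs{F_j}=\tfrac13 h_k\abs{F_k}$ and projects the altitudes of $F_l$ from $V_j$ and $V_k$.
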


\begin{proof}
Let $h_i$ be the height of the tetrahedron from vertex $V_i$
to face $F_i$. Moreover, let $h_{i,jk}$ be the height from vertex $V_i$
to edge $e_{jk}$ in the triangle $V_iV_jV_k$.
Notice that
$$V=\frac{1}{3}h_j\abs{F_j}=\frac{1}{3}h_k\abs{F_k}.$$
Hence
$$V^2=\frac{1}{9}h_jh_k\abs{F_j}\abs{F_k}.$$
It remains to show that
$$h_jh_k=2\abs{F_l}\sin\theta_{ij}\sin\theta_{ik}\sin\theta_{jik}.$$
\noindent By projection, we have
$$h_j=h_{j,ik}\sin\theta_{ik}=d_{ij}\sin\theta_{jik}\sin\theta_{ik}$$
and similarly
$$h_k=d_{ik}\sin\theta_{jik}\sin\theta_{ij}.$$
So it remains to prove
$$\abs{F_l}=\frac{1}{2}d_{ij}d_{ik}\sin\theta_{jik},$$
which is true.
\end{proof}

The following proposition gives bounds on dihedral angles
of a tetrahedron in terms of its normalized surface area.

\begin{prop}
\label{prop:lowerbddihedral}
Suppose that a tetrahedron has volume $V$ and surface area $S$.
Then all its dihedral angles $\theta_{ij}$ must satisfy the inequality
$$\sin\theta_{ij}\geq \frac{243}{S^3/V^2}.$$
\end{prop}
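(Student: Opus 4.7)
The plan is to apply Lemma \ref{lem:volformula} symmetrically at both endpoints $V_i$ and $V_j$ of the edge $e_{ij}$, then multiply the two resulting volume identities so that $\sin\theta_{ij}$ appears squared on the right-hand side. Writing the lemma at $V_i$ with the pair of edges $e_{ij}, e_{ik}$ yields
$$V^2 = \frac{2}{9}\abs{F_j}\abs{F_k}\abs{F_l}\sin\theta_{ij}\sin\theta_{ik}\sin\theta_{jik},$$
and writing it at $V_j$ with the pair $e_{ij}, e_{jk}$ yields
$$V^2 = \frac{2}{9}\abs{F_i}\abs{F_k}\abs{F_l}\sin\theta_{ij}\sin\theta_{jk}\sin\theta_{ijk}.$$
Multiplying these, solving for $\sin\theta_{ij}$, and bounding the four remaining sines (two dihedral $\sin\theta_{ik},\sin\theta_{jk}$, two planar $\sin\theta_{jik},\sin\theta_{ijk}$) above by $1$ gives
$$\sin\theta_{ij} \;\geq\; \frac{9 V^2}{2\,\abs{F_k}\abs{F_l}\sqrt{\abs{F_i}\abs{F_j}}}.$$

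It then suffices to show $\abs{F_k}\abs{F_l}\sqrt{\abs{F_i}\abs{F_j}} \leq S^3/54$ subject to $\abs{F_i}+\abs{F_j}+\abs{F_k}+\abs{F_l}=S$. Setting $a = (\abs{F_i}+\abs{F_j})/2$, AM-GM on $(\abs{F_i},\abs{F_j})$ gives $\sqrt{\abs{F_i}\abs{F_j}}\leq a$, and AM-GM on $(\abs{F_k},\abs{F_l})$ with $\abs{F_k}+\abs{F_l}=S-2a$ gives $\abs{F_k}\abs{F_l}\leq (S-2a)^2/4$. A single-variable check (or a three-term AM-GM on $2a,\abs{F_k},\abs{F_l}$) shows that $a(S-2a)^2/4$ is maximized at $a=S/6$ with value $S^3/54$, attained when $\abs{F_i}=\abs{F_j}=S/6$ and $\abs{F_k}=\abs{F_l}=S/3$. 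Plugging back in yields $\sin\theta_{ij}\geq 9V^2/(2\cdot S^3/54)=243V^2/S^3$.

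The main obstacle is spotting the correct symmetry. A naive one-vertex application of Lemma \ref{lem:volformula} combined with the direct bound $\abs{F_j}\abs{F_k}\abs{F_l}\leq(S/3)^3$ delivers only $\sin\theta_{ij}\geq 243/(2(S^3/V^2))$, losing a factor of $2$. Using the lemma at both endpoints of $e_{ij}$ replaces the triple face-area product by $\abs{F_k}\abs{F_l}\sqrt{\abs{F_i}\abs{F_j}}$, whose constrained maximum occurs at the asymmetric allocation $(S/6,S/6,S/3,S/3)$ rather than the equal split $(S/3,S/3,S/3)$; this is exactly what upgrades the constant from $243/2$ to the sharp $243$.
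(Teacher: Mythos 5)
Your proof is correct and follows essentially the same route as the paper: both apply Lemma \ref{lem:volformula} at the two endpoints of $e_{ij}$ (yielding the bounds with $\abs{F_j}\abs{F_k}\abs{F_l}$ and $\abs{F_i}\abs{F_k}\abs{F_l}$) and then use the three-term AM--GM on the grouping $\abs{F_i}+\abs{F_j},\abs{F_k},\abs{F_l}$ to reach the constant $243$. The only difference is that you combine the two volume identities multiplicatively (getting $\sqrt{\abs{F_i}\abs{F_j}}$) where the paper averages them additively (getting $(\abs{F_i}+\abs{F_j})/2$), which is an immaterial variant since $\sqrt{xy}\leq(x+y)/2$.
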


\begin{proof}
We can assume $(i,j)=(1,2)$.
By Lemma \ref{lem:volformula}, we can bound
$$V^2 \leq \frac{2}{9}\abs{F_2}\abs{F_3}\abs{F_4}\sin\theta_{12}.$$
Similarly, we have
$$V^2 \leq \frac{2}{9}\abs{F_1}\abs{F_3}\abs{F_4}\sin\theta_{12}.$$
So we may conclude
$$V^2 \leq \frac{2}{9}\paren{\frac{\abs{F_1}+\abs{F_2}}{2}}
\abs{F_3}\abs{F_4}\sin\theta_{12}.$$
The arithmetic mean-geometric mean inequality implies that
$$\paren{\abs{F_1}+\abs{F_2}}\abs{F_3}\abs{F_4}
\leq \paren{\frac{\abs{F_1}+\abs{F_2}+\abs{F_3}+\abs{F_4}}{3}}^3
=\frac{S^3}{27}.$$
Using this last inequality, we can get the desired bound.
\end{proof}

We can now apply this result to tetrahedra
with less surface area than the Sommerville No. 1.

\begin{cor}
\label{cor:dihedralbdsommerville}
A tetrahedron of unit volume with surface area less
than or equal to the Sommerville No. 1 has all dihedral angles
in the range $[\theta_0,\pi-\theta_0]$, where
$$\theta_0=\sin^{-1} \paren{\frac{27}{32\sqrt{2}}} \approx 36.63^\circ.$$
\end{cor}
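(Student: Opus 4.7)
The plan is to apply Proposition \ref{prop:lowerbddihedral} directly, after computing the scale-invariant isoperimetric quantity $S^3/V^2$ for the Sommerville No. 1.

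First I would compute $S^3/V^2$ for the Sommerville No. 1 exactly. Set up coordinates with the two opposite edges of length 2 along perpendicular skew lines: $V_1=(1,0,0)$, $V_2=(-1,0,0)$, $V_3=(0,1,1)$, $V_4=(0,-1,1)$. A quick check confirms that the four remaining edges have length $\sqrt{3}$, so this realizes the Sommerville No. 1 of Figure \ref{fig:sommerville1}. Each of the four congruent isosceles faces has base $2$ and height $\sqrt{2}$, giving total surface area $S=4\sqrt{2}$. Since the two opposite edges of length $2$ are orthogonal with common perpendicular of length $1$, the volume is $V = \tfrac{1}{6}\cdot 2 \cdot 2 \cdot 1 = \tfrac{2}{3}$. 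Therefore
$$\frac{S^3}{V^2} = \frac{(4\sqrt{2})^3}{(2/3)^2} = \frac{128\sqrt{2}}{4/9} = 288\sqrt{2}.$$

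Next, because $S^3/V^2$ is scale-invariant, any tetrahedron with unit volume and surface area no larger than the Sommerville No. 1 satisfies $S^3/V^2 \leq 288\sqrt{2}$. Applying Proposition \ref{prop:lowerbddihedral}, every dihedral angle $\theta_{ij}$ of such a tetrahedron obeys
$$\sin \theta_{ij} \;\geq\; \frac{243}{S^3/V^2} \;\geq\; \frac{243}{288\sqrt{2}} \;=\; \frac{27}{32\sqrt{2}} \;=\; \sin\theta_0.$$
Since $\sin$ is concave on $[0,\pi]$ and symmetric about $\pi/2$, the inequality $\sin\theta_{ij}\geq\sin\theta_0$ with $\theta_{ij}\in(0,\pi)$ is equivalent to $\theta_{ij}\in[\theta_0,\pi-\theta_0]$, which is the conclusion. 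A small numerical check gives $\theta_0 = \sin^{-1}(27/(32\sqrt{2})) \approx 36.63^\circ$, matching the claim.

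There is no substantive obstacle here; the only step requiring care is the computation of $S^3/V^2$ for the Sommerville No. 1, and even this is quick once coordinates are chosen exploiting the orthogonality of the two opposite length-$2$ edges. The essential content has already been done in Proposition \ref{prop:lowerbddihedral}, and this corollary is simply the instantiation of that bound at the surface area of the current conjectured minimizer.
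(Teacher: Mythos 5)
Your proposal is correct and follows essentially the same route as the paper: the paper likewise just records the normalized surface area of the Sommerville No.\ 1 (as $S/V^{2/3}=2^{11/6}\cdot 3^{2/3}$, i.e.\ $S^3/V^2=288\sqrt{2}$) and plugs it into Proposition \ref{prop:lowerbddihedral}. Your explicit coordinate computation of $S=4\sqrt{2}$ and $V=2/3$ is a correct way to obtain that value.
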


\begin{proof}
The Sommerville No. 1 has normalized surface area
$$\frac{S}{V^{2/3}}=2^{11/6}\cdot 3^{2/3}.$$
The desired bound follows from 
Proposition \ref{prop:lowerbddihedral} and direct computation.
\end{proof}

\section{Code}
\label{sec:code}

In Section \ref{sec:code}, we start by using information from Sections \ref{sec:10types} and \ref{sec:boundiso} to
reduce all potential candidates to beat the Sommerville No. 1 tetrahedron to a finite number of cases. There are \numcasesinitial\ cases
in total, and they are enumerated by computer code. Then we use the computer to eliminate all but \numcasesafterelim\ of them. Those remaining \numcasesafterelim\ cases will be handled by special arguments
in Section \ref{sec:remainingcases}.

\subsection*{Overview}
For each edge-length graph for a (previously) non-characterized type, for the dihedral angles, Table \ref{tab:10typesedgelengraphs} of Section \ref{sec:10types} provides linear equations with nonnegative integer coefficients. By Corollary \ref{cor:dihedralbdsommerville},
for a tetrahedron to beat the Sommerville No. 1, its dihedral angles must all be greater than 36.5 degrees, so at most 9 of them can stack around a common edge. This means that the sum of the coefficients in each equation is at most 9, and so there are a finite number of possible sets of coefficients. Each set of coefficients is one case.
We use the parities of coefficients provided in
Table \ref{tab:10typesedgelengraphs} and the symmetries of each type to reduce the number of cases.
Finally, we make use of reductions from one type to another
to assume that certain coefficients must be nonzero.
This is important not only for reducing the number of cases
but also because the degenerate cases where many coefficients
are zero are in general more difficult to eliminate.
The full details of this setup are discussed in Proposition \ref{prop:codesetupcorrect}.

Each case consists of a number of equations
whose variables are the dihedral angles.
Tables \ref{tab:10typesedgelengraphs} and \ref{tab:10typesdetcond}
give enough information so that in every case
the number of equations is greater than the number of variables
by exactly one.
Specifically, we start with a determinant condition on the dihedral
angles. Each time we make two edges have equal length,
their associated dihedral angles are related to one another
by a linear equation in Table \ref{tab:10typesedgelengraphs},
which produces one more variable. The equal-edge-lengths condition implies face-area relationships in Table \ref{tab:10typesdetcond}, yielding one more equation. Thus the number of equations remains one more than the number of variables, and we expect that most of the cases will produce no solution.
This is indeed the case for all but \numcasesaftereintmin\ cases.
To prove this rigorously, we use the interval minimization method
provided by Stan Wagon \cite[Chapt. 4]{BLWW}
to show that in the relevant domain of variables,
all equations cannot be simultaneously satisfied.
The details are given in Proposition \ref{prop:intervalminimize}.

For the \numcasesaftereintmin\ remaining cases that survive
interval minimization, we attempt to use the computer algebra system
of Mathematica to directly solve the system of equations.
Those which produce no solution or whose solutions conflict
with known conditions are discarded.
The remaining \numcasesafterelim\ cases which cannot be eliminated by
this method are considered by hand and finally eliminated
in Section \ref{sec:remainingcases}.
The details are given in Proposition \ref{prop:eliminateafterintmin}.

\subsection*{Implementation}
The code considers 9 types of tetrahedra which roughly
represent the 10 non-characterized types but are not exactly the same, because one code type can correspond to many of the original
tetrahedra types and vice versa. We will use the phrase ``code types'' to refer to the 9 types in the code and the phrase ``original types'' to refer to the 10 non-characterized types
from Section \ref{sec:classification}.
By correspondence we mean that each case encountered in a certain
code type can come from any of the corresponding original types.

Table \ref{tab:codetypecorr} maps all such correspondences.
Each of the 10 non-characterized types except (u), 
which is treated as a subcase of (x), 
has one main corresponding code type.
The main code type covers most of the tetrahedra in that original type, except for possible degenerate cases, which are instead covered in the non-main corresponding code types.

\begin{table}
\caption{The correspondences between the nine code types
and ten non-characterized types of tetrahedra.
``Main Corr. Type'' refers to the main non-characterized type
corresponding to each code type. ``Other Corr. Types'' refers to
other non-characterized types corresponding to each code type.}
\label{tab:codetypecorr}
\begin{tabular}{| c | l | l |}
\hline
Code Type & Main Corr. Type & Other Corr. Types \\ \hline
 \texttt{aaabcd} & (r) & -- \\
 \texttt{abaacb} & (n) & -- \\
 \texttt{abaacd} & (t) & (n) \\
 \texttt{abcaaa} & (m) & -- \\
 \texttt{abcacb} & (o) & -- \\
 \texttt{abcacd} & (v) & (o) \\
 \texttt{abcade} & (x) & (u), (n), (o), (v) \\
 \texttt{abccbb} & (h) & -- \\
 \texttt{abcddd} & (s) & -- \\
\hline
\end{tabular}
\end{table}

We first describe the setup of each code type presented in Table \ref{tab:codetypeassumption}.

\begin{lemma}
\label{lem:typersolid}
A face-to-face tetrahedral tile of type (r) has the solid angle at vertex $V_1$,
$\Omega_1$, of the form $4\pi/n$ for some $n\in\mb{N}$.
\end{lemma}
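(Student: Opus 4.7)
The plan is to show that at any point $P$ of the tiling at which some tile has its $V_1$ vertex, every tile meeting $P$ also contributes a $V_1$-type vertex at $P$. The solid angles around $P$ will then all equal $\Omega_1$ and must sum to $4\pi$.

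First I would isolate the structural feature of type (r) that makes $V_1$ special. Since $d_{12}=d_{13}=d_{14}=a$ while $d_{23}=b$, $d_{24}=c$, $d_{34}=d$ are pairwise distinct, $V_1$ is the only vertex of the tetrahedron incident to more than one edge of length $a$. Equivalently, the three faces $F_2, F_3, F_4$ at $V_1$ are isosceles triangles of shapes $(a,a,d)$, $(a,a,c)$, $(a,a,b)$ respectively, and $V_1$ is the apex of each; the remaining face $F_1 = V_2V_3V_4$ has sides $(b,c,d)$ and is not of the form $(a,a,\ast)$.

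Next, let $T$ be a tile with $V_1$ at $P$, and let $T'$ be any tile sharing with $T$ a face $F$ passing through $P$. Then $F$ is one of $F_2, F_3, F_4$ of $T$, an isosceles triangle of shape $(a,a,\ast)$ with $V_1$ as apex, for some $\ast \in \{b,c,d\}$. As a face of $T'$, $F$ must have the same three side lengths; since $b,c,d$ are distinct and $F_1$ has no sides of length $a$, the face $F$ must also be one of $F_2', F_3', F_4'$ of $T'$, whose unique apex is the $V_1$-type vertex of $T'$. Any isometry between two isosceles $(a,a,\ast)$ triangles must map the apex to the apex, so the vertex of $T'$ at $P$ is a $V_1$-type vertex.

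Finally, I would invoke the standard fact that in a face-to-face tiling by convex polytopes, the tiles meeting at a vertex $P$ are connected under the relation of sharing a face containing $P$ (their union triangulates a small sphere around $P$). Propagating the previous step along this connected structure shows that every tile at $P$ has its $V_1$ vertex at $P$, so the solid angle around $P$ decomposes as $n\Omega_1 = 4\pi$ for some $n \in \mb{N}$, giving $\Omega_1 = 4\pi/n$. The main thing to check carefully is the propagation; the rest follows immediately from the distinctness of $b, c, d$ together with the vertex-apex characterization of $V_1$.
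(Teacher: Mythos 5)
Your proposal is correct and takes essentially the same approach as the paper, which simply asserts that in a type (r) tetrahedron the vertex $V_1$ can only meet $V_1$-type vertices of other tiles, so that $\Omega_1$ divides $4\pi$. Your write-up supplies the details the paper leaves implicit (the apex-to-apex matching of the isosceles $(a,a,\ast)$ faces forced by the distinctness of $a,b,c,d$, and the propagation around the link of $P$), and these details are sound.
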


\begin{proof}
For a tetrahedron of type (r), vertex $V_1$ cannot meet with
other vertices in the tiling. Hence the solid angle $\Omega_1$
must divide $4\pi$.
\end{proof}

\begin{prop}
\label{prop:codesetupcorrect}
The nine code types with assumptions and symmetries given in
Table \ref{tab:codetypeassumption} cover all potential candidates to beat or tie with the Sommerville No. 1 of the ten non-characterized types, as in Table \ref{tab:codetypecorr}.
\end{prop}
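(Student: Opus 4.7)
The plan is a case-by-case verification across the ten non-characterized types, showing that each code type in Table \ref{tab:codetypeassumption} faithfully encodes the constraints of its main corresponding original type in Table \ref{tab:codetypecorr}, and that the stated assumptions and symmetries lose no candidate tetrahedron capable of beating or tying the Sommerville No. 1.

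First, I would fix a non-characterized type (one of (h), (m), (n), (o), (r), (s), (t), (u), (v), (x)) and assemble its constraints: the linear-system data from Table \ref{tab:10typesedgelengraphs}, the strengthened determinant condition from Table \ref{tab:10typesdetcond}, and, for type (r), the additional restriction $\Omega_1 = 4\pi/n$ from Lemma \ref{lem:typersolid}. By Corollary \ref{cor:dihedralbdsommerville}, every dihedral angle exceeds $36.5^\circ$, so every linear relation $\sum n_{ij}\theta_{ij} = 2\pi$ has coefficient sum at most $9$; this yields only finitely many coefficient tuples per edge-length graph. Then I would check that the matching code type's enumeration reproduces this exact system of constraints when its assumptions hold, thereby covering all generic tetrahedra of that original type.

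Second, I would justify the assumptions and symmetries individually. A symmetry reduction is verified by exhibiting an explicit vertex permutation that preserves the edge-length pattern and the linear system, so that enumeration may be restricted to representatives. A nonzero-coefficient assumption is justified by showing that, when that coefficient vanishes, the induced extra edge-equality or angle coincidence either forces the tetrahedron into a more symmetric original type handled by a different row of Table \ref{tab:codetypecorr} (as with type (t) degenerating into type (n) under \texttt{abaacd}, or types (u), (n), (o), (v) being absorbed into \texttt{abcade}), or pushes it into one of the fifteen already-characterized types from Section \ref{sec:classification}. Particular care is required for code type \texttt{abcade} and its absorption of type (u): because (u)'s determinant condition is deliberately omitted from Table \ref{tab:10typesdetcond}, I would verify that the (x)-determinant condition applied in \texttt{abcade} remains a valid necessary condition on dihedral angles when specialized to the (u)-edge-length pattern, so no (u)-candidate escapes enumeration.

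The main obstacle is bookkeeping for these cross-type degenerate reductions. To guarantee completeness, one must confirm that when a nonzero-coefficient assumption fails, the resulting specialization of the code system \emph{contains}, rather than merely intersects, the constraint system of the more symmetric original type that the tetrahedron collapses into. This reduces to a combinatorial compatibility check between edge-length graphs of paired original types under edge-equality specialization: routine once laid out explicitly, but requiring careful case tracking to verify that the union of the nine code-type enumerations indeed equals the union of all ten non-characterized types' candidate sets.
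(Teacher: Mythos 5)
Your overall architecture matches the paper's: verify assumption by assumption that each code type's constraints are genuine necessary conditions for its corresponding original types, justify the symmetry restrictions by explicit vertex permutations, bound the coefficient sums by $9$ via Corollary \ref{cor:dihedralbdsommerville}, and track the cross-type correspondences of Table \ref{tab:codetypecorr}. Your point of care about type (u) is also well placed and resolved the same way the paper resolves it: since the (u) edge pattern contains the (x) equality $d_{12}=d_{23}$ (after relabeling), the (x)-determinant condition remains valid for (u)-candidates enumerated under \texttt{abcade}.

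The gap is in your justification of the nonzero-coefficient assumptions, which is the delicate part of this proposition. You claim that when a coefficient $n_{ij}$ vanishes, ``the induced extra edge-equality or angle coincidence \dots forces the tetrahedron into a more symmetric original type.'' But a vanishing coefficient induces no edge-equality and cannot change the tetrahedron's type: the edge lengths are intrinsic to the tetrahedron, whereas the coefficients $n_{ij}$ only record how copies stack around an edge in one particular tiling. The mechanism the paper actually uses is Proposition \ref{prop:edgegraphlincomb}: a closed walk exists starting from \emph{any} node of the $d$-edge-length graph, so by choosing the starting node on a graph edge labeled $\set{i,j}$ one obtains, for each label, some valid relation with $n_{ij}>0$. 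Consequently, if no single relation has two nonzero coefficients, then each angle appearing in the relation individually satisfies $n_{ij}\theta_{ij}=2\pi$, i.e., is of the form $2\pi/n$. These degenerate possibilities are then absorbed either by another code type that pins the relevant angles to specific values (the reductions of Table \ref{tab:codereduction}) or by the complete classification of all-$2\pi/n$ tetrahedra in Theorem \ref{thm:2pi/ntile} --- not, as you suggest, by the fifteen already-characterized types, and certainly not by a collapse to a more symmetric edge-length pattern. Without the ``any starting node'' argument you have no grounds to impose the nonzero conditions at all, and the single-nonzero-coefficient cases are exactly the degenerate ones the paper notes are hardest to eliminate; so as written your enumeration would either silently drop candidates or have to retain all of them.
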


\begin{proof}
We argue that for each non-characterized type, the corresponding code types
given in Table \ref{tab:codetypecorr} are sufficient to cover all potential candidates of that type.
We first focus on the main corresponding code type. Then we consider degenerate cases
which correspond to the non-main code types. Finally, we argue that the symmetries
are correct.

For the original types (r), (n), (t), (m), (o), (v), (x), (h), and (s), the main corresponding
code type is given in Table \ref{tab:codetypecorr}. The assumptions for these code types
in Table \ref{tab:codetypeassumption}, except the ones that say certain $n_{ij}$ are nonzero,
are justified as follows.
\begin{itemize}
\item For conditions of the form $\theta_{ij}\in\set{\pi/2,\pi/3,\pi/4}$,
Table \ref{tab:10typesedgelengraphs} says that $\theta_{ij}$ is of the form $\pi/n$
in the Linear Systems column. By Corollary \ref{cor:dihedralbdsommerville}, $n$ can only be
2, 3 or 4.
\item That $\theta_{ij},\theta_{jk},\theta_{ik}$ cannot all be $\pi/2$
is true because otherwise the faces $F_i$, $F_j$, and $F_k$ must be perpendicular to
the face $F_l$, and the tetrahedron degenerates into an infinite triangular prism.
\item That $\theta_{ij}+\theta_{ik}+\theta_{il}>\pi$ is true
because $\Omega_i=\theta_{ij}+\theta_{ik}+\theta_{il}-\pi$ is the solid angle at
vertex $V_i$ (Lemma \ref{lem:dihedralineq}).
\item That $\theta_{ik} + \theta_{il} + \theta_{jl} + \theta_{jk} < 2\pi$ is true by Lemma \ref{lem:dihedralineq}.
\item That the matrix of each original type given in Table \ref{tab:10typesdetcond}
has nontrivial null space is argued in Proposition \ref{prop:matrixcorrect}.
\item The linear systems $\sum n_{ij}\theta_{ij}=2\pi$ and the parities of $n_{ij}$
are given in Table \ref{tab:10typesedgelengraphs}.
By Corollary \ref{cor:dihedralbdsommerville}, the sum of these $n_{ij}$ is at most 9.
\item For code type \verb|aaabcd|, the fact that $\Omega_1$ divides $4\pi$
follows from the fact that this code type only corresponds to type (r) and Lemma \ref{lem:typersolid}.
\end{itemize}

Now we argue for the correctness of the conditions ``at least two of the $n_{ij}$ are nonzero,''
henceforth called the \emph{nonzero condition}.
By Proposition \ref{prop:edgegraphlincomb}, for any linear combination
$$\sum n_{ij}\theta_{ij}=2\pi$$
in Table \ref{tab:codetypeassumption}, we can assume any coefficient $n_{ij}$ is nonzero
(but perhaps not two coefficients at once). So we can assume \emph{at least two} coefficients
are nonzero unless all angles $\theta_{ij}$ in the linear combination are of the form $2\pi/n$.
This forms the basis of the arguments described in Table \ref{tab:codereduction}.

The arguments in Table \ref{tab:codereduction} read as follows.
For each linear system (referred to by the condition number), if the nonzero condition
of that system does not hold, then certain dihedral angles must be of the form $2\pi/n$,
and that possibility is covered by another code type indicated in the table.
For example, for type (n), if only the nonzero condition for (3) does not hold,
then $\theta_{12}$, $\theta_{14}$ and $\theta_{23}$ are of the form $2\pi/n$.
By inspection, this possbility can be covered by the code type \verb|abacde|.
The phrase ``Reduce to the $2\pi/n$ case'' means that for that possibility,
all dihedral angles are of the form $2\pi/n$, so by Theorem \ref{thm:2pi/ntile},
the tetrahedron does not beat or tie with the Sommerville No. 1.
For type (u), refer to the information in Table \ref{tab:10typesedgelengraphs}.
Hence all cases where the nonzero conditions fail are covered by other code types
or Theorem \ref{thm:2pi/ntile}, so that the nonzero conditions can be enforced.

\begin{longtable}{| C{1.5cm} | C{2.5cm} | L{12cm} |}
\caption{Reductions of the 10 non-characterized types to non-main corresponding code types.}
\label{tab:codereduction} \\ \hline
Type &
Main Corr. Code Type &
Reductions
\\ \hline \endfirsthead

\hline
Type &
Main Corr. Code Type &
Reductions
\\ \hline \endhead
 
(r) & \texttt{aaabcd} &
(3): Reduce to the $2\pi/n$ case.
\\ \hline

(n) & \texttt{abaacb} &
(3): Reduce to code type \verb|abcade| (with some permutation).
\newline (4): Reduce to code type \verb|abaacd|.
\newline (3) and (4): Reduce to the $2\pi/n$ case.
\\ \hline

(t) & \texttt{abaacd} &
(3): Reduce to the $2\pi/n$ case.
\\ \hline

(m) & \texttt{abcaaa} &
(3): Reduce to the $2\pi/n$ case.
\\ \hline

(o) & \texttt{abcacb} &
Note the cyclic symmetry between (2), (3), and (4).
\newline (3): Reduce to code type \verb|abcacd|.
\newline (3) and (4): Reduce to code type \verb|abcade|.
\newline (2), (3), and (4): Reduce to the $2\pi/n$ case.
\\ \hline

(v) & \texttt{abcacd} &
(3): Reduce to code type \verb|abcade| (with some permutation).
\newline (4): Reduce to code type \verb|abcade|.
\newline (3) and (4): Reduce to the $2\pi/n$ case.
\\ \hline

(x) & \texttt{abcade} &
(3): Reduce to the $2\pi/n$ case.
\\ \hline

(h) & \texttt{abccbb} &
(4): Reduce to the $2\pi/n$ case.
\\ \hline

(s) & \texttt{abcddd} &
(3): Reduce to the $2\pi/n$ case.
\\ \hline

(u) & -- &
If $n_{12}$ and $n_{13}$ are nonzero, reduce to code type \verb|abcade|
(with some permutation).
\newline Otherwise, reduce to the $2\pi/n$ case.
\\ \hline
\end{longtable}

Finally, the inequalities in the Symmetries column can be enforced
due to the symmetries of the assumptions of each code type, which can be easily checked.
Therefore the nine code types cover all potential candidates to beat or tie with
the Sommerville No. 1 of the ten non-characterized types.
\end{proof}

\begin{longtable}{| C{2.5cm} | L{8cm} | L{6cm} |}
\caption{Assumptions of each code type.
Matrix of each type refers to the matrix in the strengthened
determinant condition in Table \ref{tab:10typesdetcond}.}
\label{tab:codetypeassumption} \\ \hline
Code Type &
Assumptions &
Symmetries
\\ \hline \endfirsthead

\hline
Code Type &
Assumptions &
Symmetries
\\ \hline \endhead
 
\texttt{aaabcd} &
\begin{enumerate}[leftmargin=*]
\item $\theta_{23},\theta_{24},\theta_{34} \in \set{\pi/2,\pi/3,\pi/4}$.
\newline $\theta_{23},\theta_{24},\theta_{34}$ cannot all be $\pi/2$.
\item Matrix of type (r) has nontrivial null space ($d_{12}=d_{13}=d_{14}$).
\item $n_{12}\theta_{12}+n_{13}\theta_{13}+n_{14}\theta_{14}=2\pi$.
\newline $n_{12}+n_{13}+n_{14} \leq 9$.
\newline $n_{12}$, $n_{13}$, and $n_{14}$ have the same parity and at least two are nonzero.
\item $\Omega_1$ divides $4\pi$.
\end{enumerate} &
$n_{12} \leq n_{13} \leq n_{14}$.
\newline If $n_{12}=n_{13}$, then $\theta_{24}\geq\theta_{34}$.
\newline If $n_{13}=n_{14}$, then $\theta_{23}\geq\theta_{24}$.
\\ \hline

\texttt{abaacb} &
\begin{enumerate}[leftmargin=*]
\item $\theta_{24} \in \set{\pi/2,\pi/3,\pi/4}$.
\item Matrix of type (n) has nontrivial null space ($d_{12}=d_{14}=d_{23}$, $d_{13}=d_{34}$).
\item $n_{12}\theta_{12}+n_{14}\theta_{14}+n_{23}\theta_{23}=2\pi$.
\newline $n_{12}+n_{14}+n_{23} \leq 9$.
\newline $n_{12}$, $n_{14}$, and $n_{23}$ are even and at least two are nonzero.
\item $n_{13}\theta_{13}+n_{34}\theta_{34}=2\pi$.
\newline $n_{13}+n_{34}\leq 9$.
\newline $n_{13}$ and $n_{34}$ are even and nonzero.
\end{enumerate} &
--
\\ \hline

\texttt{abaacd} &
\begin{enumerate}[leftmargin=*]
\item $\theta_{13},\theta_{24},\theta_{34} \in \set{\pi/2,\pi/3,\pi/4}$.
\item Matrix of type (t) has nontrivial null space ($d_{12}=d_{14}=d_{23}$).
\item $n_{12}\theta_{12}+n_{14}\theta_{14}+n_{23}\theta_{23}=2\pi$.
\newline $n_{12}+n_{14}+n_{23} \leq 9$.
\newline $n_{12}$, $n_{14}$, and $n_{23}$ are even and at least two are nonzero.
\end{enumerate} &
$n_{14}\leq n_{23}$.
\newline If $n_{14}=n_{23}$, then $\theta_{13}\geq\theta_{24}$.
\\ \hline

\texttt{abcaaa} &
\begin{enumerate}[leftmargin=*]
\item $\theta_{13},\theta_{14} \in \set{\pi/2,\pi/3,\pi/4}$.
\item Matrix of type (m) has nontrivial null space ($d_{12}=d_{23}=d_{24}=d_{34}$).
\item $n_{12}\theta_{12}+n_{23}\theta_{23}+n_{24}\theta_{24}+n_{34}\theta_{34}=2\pi$.
\newline $n_{12}+n_{23}+n_{24}+n_{34} \leq 9$.
\newline $n_{12}$, $n_{23}$, and $n_{24}$ have the same parity,
and $n_{34}$ is even.
\newline At least two of $n_{12},n_{23},n_{24},n_{34}$ are nonzero.
\end{enumerate} &
$n_{23}\leq n_{24}$.
\newline If $n_{23}=n_{24}$, then $\theta_{13}\geq\theta_{14}$.
\\ \hline

\texttt{abcacb} &
\begin{enumerate}[leftmargin=*]
\item Matrix of type (o) has nontrivial null space ($d_{12}=d_{23}$, $d_{13}=d_{34}$,
$d_{14}=d_{24}$).
\item $n_{12}\theta_{12}+n_{23}\theta_{23}=2\pi$.
\newline $n_{12}+n_{23} \leq 9$.
\newline $n_{12}$ and $n_{23}$ are even and nonzero.
\item $n_{13}\theta_{13}+n_{34}\theta_{34}=2\pi$.
\newline $n_{13}+n_{34} \leq 9$.
\newline $n_{13}$ and $n_{34}$ are even and nonzero.
\item $n_{14}\theta_{14}+n_{24}\theta_{24}=2\pi$.
\newline $n_{14}+n_{24} \leq 9$.
\newline $n_{14}$ and $n_{24}$ are even and nonzero.
\end{enumerate} &
$(n_{12},n_{23})\leq (n_{13},n_{34})$
and $(n_{12},n_{23})\leq (n_{14},n_{24})$
in the dictionary order.
\\ \hline

\texttt{abcacd} &
\begin{enumerate}[leftmargin=*]
\item $\theta_{13},\theta_{34} \in \set{\pi/2,\pi/3,\pi/4}$.
\item Matrix of type (v) has nontrivial null space ($d_{12}=d_{23}$, $d_{14}=d_{24}$).
\item $n_{12}\theta_{12}+n_{23}\theta_{23}=2\pi$.
\newline $n_{12}+n_{23} \leq 9$.
\newline $n_{12}$ and $n_{23}$ are even and nonzero.
\item $n_{14}\theta_{14}+n_{24}\theta_{24}=2\pi$.
\newline $n_{14}+n_{24} \leq 9$.
\newline $n_{14}$ and $n_{24}$ are even and nonzero.
\end{enumerate} &
--
\\ \hline

\texttt{abcade} &
\begin{enumerate}[leftmargin=*]
\item $\theta_{13},\theta_{14},\theta_{24},\theta_{34} \in \set{\pi/2,\pi/3,\pi/4}$.
\newline $\theta_{13},\theta_{14},\theta_{34}$ cannot all be $\pi/2$.
\newline $\theta_{14}+\theta_{24}+\theta_{34}>\pi$.
\item Matrix of type (x) has nontrivial null space ($d_{12}=d_{23}$).
\item $n_{12}\theta_{12}+n_{23}\theta_{23}=2\pi$.
\newline $n_{12}+n_{23} \leq 9$.
\newline $n_{12}$ and $n_{23}$ are even and nonzero.
\end{enumerate} &
$n_{12}\leq n_{23}$.
\newline If $n_{12}=n_{23}$, then $\theta_{14}\geq \theta_{34}$.
\\ \hline

\texttt{abccbb} &
\begin{enumerate}[leftmargin=*]
\item $\theta_{12},\theta_{14}=\theta_{23} \in \set{\pi/2,\pi/3,\pi/4}$.
\item Matrix of type (h) has nontrivial null space ($d_{14}=d_{23}$,
$d_{13}=d_{24}=d_{34}$).
\item $\theta_{13}=\theta_{24}$.
\item $n_{13}\theta_{13}+n_{34}\theta_{34}=2\pi$.
\newline $n_{13}+n_{34} \leq 9$.
\newline $n_{13}$ and $n_{34}$ are even and nonzero.
\end{enumerate} &
--
\\ \hline

\texttt{abcddd} &
\begin{enumerate}[leftmargin=*]
\item $\theta_{12},\theta_{13},\theta_{14} \in \set{\pi/2,\pi/3,\pi/4}$.
\newline $\theta_{12}+\theta_{13}+\theta_{14}>\pi$.
\item Matrix of type (s) has nontrivial null space ($d_{23}=d_{24}=d_{34}$).
\item $n_{23}\theta_{23}+n_{24}\theta_{24}+n_{34}\theta_{34}=2\pi$.
\newline $n_{23}+n_{24}+n_{34} \leq 9$.
\newline $n_{23}$, $n_{24}$, and $n_{34}$ are even and at least two are nonzero.
\end{enumerate} &
$n_{23} \leq n_{24} \leq n_{34}$.
\newline If $n_{23}=n_{24}$, then $\theta_{13}\geq\theta_{14}$.
\newline If $n_{24}=n_{34}$, then $\theta_{12}\geq\theta_{13}$.
\\ \hline
\end{longtable}

We now describe how the cases are eliminated. First, from the setup, there are a total of
\numcasesinitial\ cases. Then, using interval minimization technique, this can be reduced
to \numcasesaftereintmin\ cases (Prop. \ref{prop:intervalminimize}).
Finally, by solving the system of equations directly, this can be further reduced to
\numcasesafterelim\ cases (Prop. \ref{prop:eliminateafterintmin}).

\begin{prop}
\label{prop:intervalminimize}
From the \numcasesinitial\ cases of the nine code types in Table \ref{tab:codetypeassumption},
all but \numcasesaftereintmin\ cases cannot occur.
\end{prop}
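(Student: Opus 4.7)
The plan is to enumerate all \numcasesinitial\ cases programmatically from the nine templates of Table \ref{tab:codetypeassumption}, then attack each case with interval arithmetic. In each case, the unknowns are the dihedral angles $\theta_{ij}$, each of which is constrained to lie in the interval $[\theta_0, \pi - \theta_0]$ with $\theta_0 = \sin^{-1}(27/(32\sqrt{2}))$ by Corollary \ref{cor:dihedralbdsommerville}, except for those angles already fixed to a value in $\set{\pi/2, \pi/3, \pi/4}$ by the assumptions. The constraints are of three flavors: (i) the linear $2\pi$-equations coming from the edge-length graphs, whose coefficients are already fixed once the case is specified; (ii) the vanishing of the determinant coming from the null-space condition on the strengthened matrix of Table \ref{tab:10typesdetcond}; and (iii) the remaining inequalities of Lemma \ref{lem:dihedralineq} and the assumptions of Table \ref{tab:codetypeassumption}. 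Because the number of independent equalities exceeds the number of free continuous variables by exactly one in every case, we expect generically no solution.

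First I would generate the case list by iterating over allowed tuples of integer coefficients $n_{ij}$ (subject to the parity constraints, the ordering symmetries, and the bound $\sum n_{ij} \le 9$) and tuples of fixed angles drawn from $\set{\pi/2, \pi/3, \pi/4}$. For each resulting case, the remaining free variables are a small subset of $\set{\theta_{12}, \ldots, \theta_{34}}$ confined to rectangular boxes. I would then apply Wagon's reliable interval-minimization routine \cite[Chapt.~4]{BLWW}: starting from the initial box, evaluate each constraint function in interval arithmetic; if the resulting interval does not contain $0$, the box is discarded; otherwise, bisect the box along its longest edge and recurse, with a preset maximum depth. A case is declared eliminated if, after the refinement, the intersection of the zero sets of all constraints is forced to be empty.

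The main obstacle, and the reason the reduction only gets down to \numcasesaftereintmin\ rather than $0$, is the near-degenerate cases. Typical offenders are tuples where the strengthened determinant vanishes along a positive-dimensional locus because the matrix rows are nearly collinear, or where two of the linear equations in the $\theta_{ij}$ become consistent on a continuum. In such a case, interval arithmetic cannot distinguish a genuine thin solution set from a numerical near-zero, and repeated bisection fails to discard the box. I would therefore set a generous depth cutoff, and whatever cases survive are flagged for the separate treatment in Proposition \ref{prop:eliminateafterintmin}.

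Finally I would verify the count by letting the code print the list of surviving cases per code type, confirming the total is \numcasesaftereintmin, and cross-checking that every eliminated case was rejected because some constraint was provably bounded away from zero throughout the input box (rather than, say, through a programming bug in the symmetry reductions). The rigor of the conclusion rests entirely on (a) correctness of the case enumeration---which matches the bookkeeping in Proposition \ref{prop:codesetupcorrect}---and (b) directed rounding in the interval library, so I would use Wagon's implementation for (b). The output of this procedure is the statement of the proposition.
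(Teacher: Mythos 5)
Your proposal follows essentially the same route as the paper: enumerate the \numcasesinitial\ cases from Table \ref{tab:codetypeassumption}, exploit the fact that the equalities outnumber the free angles by one, and use Wagon's rigorous interval method over a bounding box of the feasible region to certify that most cases admit no solution, deferring the survivors to Proposition \ref{prop:eliminateafterintmin}. The only cosmetic difference is that the paper first substitutes the linear $2\pi$-relations to eliminate variables and then interval-minimizes the single objective $d_1^2+\cdots+d_m^2$ (showing its minimum is strictly positive), rather than propagating each constraint separately, and it notes that the hyperrectangular bounding box introduces one extra spurious case handled by inspection.
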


\begin{proof}
Each choice of $\theta_{ij}\in\set{\pi/2,\pi/3,\pi/4}$ and coefficients $n_{ij}$
in Table \ref{tab:codetypeassumption} gives one case. There are a total of \numcasesinitial\ cases
once enumerated. For each case, we explain how to set up variables and equations.
Then we explain how to determine that the system of equations has no solution.

From a linear system $\sum n_{ij}\theta_{ij}=2\pi$ consisting of $k$ terms, one angle $\theta_{ij}$
can be written in terms of $k-1$ other angles, provided that the coefficient $n_{ij}$ is nonzero.
Sometimes one coefficient is nonzero across all cases of a certain type, but sometimes
different coefficients have to be chosen. Once this is done, the linear system
will give rise to $k-1$ variables. We do this for all systems of that code type to get all variables.

The equations come from the condition that the matrix of each type has nontrivial null space in Table \ref{tab:codetypeassumption}.
If the matrix is $4\times n$, there are $5-n$ equations, each saying that an $n\times n$ determinant vanishes.
The number of equations is always greater than the number of variables by one,
so the system is overdetermined and we expect that most cases will produce no solution.
The choice of which determinants to use
affects the number of remaining cases, which for our computation is \numcasesaftereintmin.
To get this number, we put in some effort to choose determinants which give few remaining cases.

To determine whether determinants $d_1,d_2,\dots,d_m$ can vanish simultaneously, we minimize
$$d_1^2+d_2^2+\dots+d_m^2$$
with respect to all variables satisfying
\begin{enumerate}
\item inequalities on sums of dihedral angles given in Lemma \ref{lem:dihedralineq}, and
\item bounds on the dihedral angles given in Corollary \ref{cor:dihedralbdsommerville}.
\end{enumerate}
For the bounds in Corollary \ref{cor:dihedralbdsommerville}, we use $36.5^\circ$
as a numerical lower bound of $\theta_0$. We do not want to get very close to the exact value
to prevent round-off errors. On the other hand, it is undesirable to use
values less than or equal to $36^\circ$
because this gives superfluous cases containing dihedral angles $2\pi/5$.

All files in this computation are in the folder \verb|remaining_types| in the GitHub repository.
We ran this computation first in the files \verb|Type_xxxxxx.nb|, where \verb|xxxxxx| is the code type.
However, this uses Mathematica's \verb|NMinimize| function to minimize, which does not prove that the minimum is greater than zero.
To prove this rigorously, we utilize Stan Wagon's \cite[Chapt. 4]{BLWW}
interval minimization code in the file \verb|IntervalMinimize.nb|.
However, the interval minimization code can only minimize with respect to a
hyperrectangular region. Hence we make a bounding box of the region of variables that
satisfies conditions (1) and (2) above. This produces one superflouous case
not present in the \verb|NMinimize| approach, which is of the type
\verb|abcddd|. Fortunately, this case can be eliminated just by inspection.
The details are in the file \verb|Type_abcddd_prove.nb|.

The files that use interval minimization to rigorously eliminate all but
\numcasesaftereintmin\ cases are \verb|Type_xxxxxx_prove.nb|.
The summary of this elimination process is in \verb|casesummary.txt|.
The work of all the code described here proves the proposition.
\end{proof}

\begin{prop}
\label{prop:eliminateafterintmin}
From the \numcasesaftereintmin\ cases remaining after interval minimization
of Proposition \ref{prop:intervalminimize}, all but
\numcasesafterelim\ cases, which will be dealt with in
Section \ref{sec:remainingcases}, cannot match or improve upon the Sommerville No. 1.
\end{prop}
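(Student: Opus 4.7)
The plan is to work case by case through the \numcasesaftereintmin\ systems that survived interval minimization, attempting in each case to solve the overdetermined system of determinant equations (together with the linear relations $\sum n_{ij}\theta_{ij}=2\pi$ from the edge-length graphs) in closed form. For every case, the variables are a subset of the dihedral angles, and the equations come from the vanishing of each $n\times n$ minor of the strengthened matrix in Table \ref{tab:10typesdetcond} (which has nontrivial null space) together with the fixed choices $\theta_{ij}\in\set{\pi/2,\pi/3,\pi/4}$ and the linear constraints with the chosen coefficients $n_{ij}$. Because the number of equations exceeds the number of free angle variables by exactly one, each system is generically inconsistent, and we expect most cases to simply have no real solution in the admissible region.

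The concrete procedure I would implement in Mathematica is: for each remaining case, feed the system to \verb|Solve| (or \verb|Reduce| when \verb|Solve| stalls) and obtain the finite list of candidate angle tuples. Each candidate is then tested against the necessary conditions already established, namely (i) the Wirth--Dreiding inequalities of Lemma \ref{lem:dihedralineq}; (ii) the dihedral-angle lower bound of Corollary \ref{cor:dihedralbdsommerville}, i.e.\ every $\theta_{ij}\in[\theta_0,\pi-\theta_0]$; (iii) positivity of the Cayley--Menger determinant after reconstructing edge lengths via Lemma \ref{lem:lengthfromangle}; and (iv) the normalized-surface-area condition $S^3/V^2\le (S^3/V^2)_{\mathrm{Som.\,1}}$ computed directly from those edge lengths. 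Any case for which \verb|Solve| returns the empty set, or whose surviving candidates all fail one of (i)--(iv), is eliminated.

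After grinding this through, exactly \numcasesafterelim\ cases survive and will be addressed by special arguments in Section \ref{sec:remainingcases}. To make the elimination rigorous rather than merely numerical, I would carry out the computations symbolically whenever possible, and, where \verb|Solve| returns parametric families that reduce to univariate polynomial equations, verify unsolvability or parameter bounds using exact root isolation (\verb|RootIntervals|, \verb|CountRoots|) rather than floating-point root-finding. The code and the per-case log are to be stored in the same \verb|remaining_types| folder of the GitHub repository described in Proposition \ref{prop:intervalminimize}.

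The main obstacle I anticipate is not the bulk of the cases---these should collapse quickly since each system is overdetermined---but the handful of cases where \verb|Solve| either runs for too long on the full system or returns an enormous \verb|Root|-object expression whose admissibility under (i)--(iv) is not apparent on inspection. For those I would split along each discrete parameter (the prescribed angles and the coefficient tuples) so the remaining system has only one or two continuous unknowns and exact analysis via resultants or Sturm sequences is feasible. The \numcasesafterelim\ cases left at the end are precisely those whose Mathematica-produced solutions actually do satisfy all four necessary conditions, so that no purely algebraic obstruction remains; these are the ones Section \ref{sec:remainingcases} must dispatch by geometric/tiling arguments.
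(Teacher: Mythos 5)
Your overall strategy---symbolically solving each of the \numcasesaftereintmin\ overdetermined systems and discarding those whose solutions violate necessary conditions---matches the paper's. But your elimination filter is materially different from the one the paper actually uses, and that difference is where the content of the proposition lives. The paper discards a case only if (1) the system has no solution, (2) the closed-form solution has all dihedral angles of the form $2\pi/n$, so that Theorem \ref{thm:2pi/ntile} applies, or (3) for code type \texttt{aaabcd}, the solid angle $\Omega_1$ fails to divide $4\pi$, so that Lemma \ref{lem:typersolid} applies. Your filter (the Wirth--Dreiding inequalities, the angle bounds of Corollary \ref{cor:dihedralbdsommerville}, Cayley--Menger positivity, and a direct surface-area comparison) contains neither of the tiling-based criteria (2) and (3), and those are the ones that do most of the killing: for instance, five of the seven \texttt{aaabcd} cases surviving interval minimization are dispatched at this stage, and the divisibility condition on $\Omega_1$ is the only tool available for them. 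A case whose closed-form solution is a perfectly valid tetrahedron with small surface area but with $\Omega_1$ not dividing $4\pi$ passes all four of your tests, so your procedure cannot be guaranteed to leave exactly the seven cases that Section \ref{sec:remainingcases} handles.

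There is also a mismatch in the opposite direction, plus a practical snag. Several of the cases the paper deliberately retains solve to one-parameter families (RC4--RC7 are members of Goldberg's families) or to tetrahedra of strictly larger area (RC2 is Sommerville No.~3 and RC3 has normalized area about $8.395$); your criterion (iv) would eliminate RC2, RC3 and RC6 already here, and for RC4, RC5 and RC7 it must be applied as a minimization of $S^3/V^2$ over the free parameter rather than as a pointwise test on a finite list of angle tuples, which your write-up does not provide for. None of your eliminations is unsound---anything failing (i)--(iv) indeed cannot match or improve upon the Sommerville No.~1---but the concluding claim that exactly \numcasesafterelim\ cases survive, and that they are precisely those of Section \ref{sec:remainingcases}, is asserted rather than derived. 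To close the gap you must either add the two tiling-based criteria (2) and (3) above, or independently verify that every case your filter fails to eliminate is in fact disposed of by a special argument later.
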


\begin{proof}
From the \numcasesaftereintmin\ remaining cases, we attempt to solve the
system of equations directly (symbolically). Then each case can be eliminated
if the result is one of the following:
\begin{enumerate}
\item No solution.
\item Solved to a closed-form solution with dihedral angles all of the form $2\pi/n$.
\item For code type \verb|aaabcd|, solved to a closed-form solution for which
the solid angle $\Omega_1$ does not divide $4\pi$.
\end{enumerate}
That possibility (1) can be eliminated is obvious. For possibility (2), by Theorem
\ref{thm:2pi/ntile}, no tetrahedron in this case can beat or tie with the Sommerville No. 1.
For possibility (3), by Lemma \ref{lem:typersolid}, the tetrahedra in this case do not tile.
A case cannot be eliminated if the result is one of the following:
\begin{enumerate}
\item Cannot solve (either to a closed-form solution or to no solution).
\item Timeout after 5 minutes.
\end{enumerate}
The files that carry out this computation are \verb|Type_xxxxxx.nb|, where \verb|xxxxxx|
is the code type, in the folder \verb|remaining_types|.
The summary of this process is in \verb|casesummary.txt|.
After the process has been carried out, there remain \numcasesafterelim\ cases
as in Section \ref{sec:remainingcases}.
\end{proof}

Table \ref{tab:codenumbers} presents the number of cases left at each stage of the
elimination process for each code type.
We summarize the results of this section (Props. \ref{prop:codesetupcorrect},
\ref{prop:intervalminimize}, and \ref{prop:eliminateafterintmin}) in the following proposition.

\begin{table}
\caption{Summary of results from code. The main corresponding type to each code type is also given. ``Num. Cases'' gives the number of cases that fit initial restrictions (Prop. \ref{prop:codesetupcorrect}).
``After Int. Min.'' gives the number of cases remaining after interval minimization
(Prop. \ref{prop:intervalminimize}). ``After Eliminate'' gives the number of cases remaining after elimination based on logical restrictions (Prop. \ref{prop:eliminateafterintmin}). A dash ``--'' in the ``After Eliminate'' column indicates that the type was already down to 0 cases and elimination was not applied. This process is outlined in Section \ref{sec:code}. After coding, a total of \numcasesafterelim\ cases remain; these will be considered in Section \ref{sec:remainingcases}.}
\label{tab:codenumbers}
\begin{tabular}{| c | c | c | c | c |}
\hline
Code Type & Main Corr. Type & Num. Cases & After Int. Min.& After Eliminate \\ \hline
 \texttt{aaabcd} & (r) & 224 & 7 & 2 \\
 \texttt{abaacb} & (n) & 396 & 1 & 1\\
 \texttt{abaacd} & (t) & 315 & 8 & 2\\
 \texttt{abcaaa} & (m) & 459 & 0 & -- \\
 \texttt{abcacb} & (o) & 91 & 0 & -- \\
 \texttt{abcacd} & (v) & 324 & 6 & 1\\
 \texttt{abcade} & (x) & 144 & 5 & 0\\
 \texttt{abccbb} & (h) & 54 & 2 & 1\\
 \texttt{abcddd} & (s) & 67 & 0 & --\\
\hline
 \multicolumn{2}{|c|}{Total} & \numcasesinitial\ & \numcasesaftereintmin\ & \numcasesafterelim\ \\ \hline
\end{tabular}
\end{table}

\begin{prop}
\label{prop:allfromcode}
From the ten non-characterized types, only seven cases as dealt with in Section \ref{sec:remainingcases}
can potentially beat or tie with the Sommerville No. 1.
\end{prop}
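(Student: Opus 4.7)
The plan is to chain together the three technical propositions that precede this statement. First I would invoke Proposition \ref{prop:codesetupcorrect} to reduce the problem from the ten (previously) non-characterized original types to the nine code types of Table \ref{tab:codetypeassumption}: that proposition guarantees that every tetrahedron of one of the ten types which could potentially match or beat the Sommerville No. 1 falls into at least one of the cases enumerated by the code types (together with the recorded assumptions and symmetry breaking conditions).

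Next, I would apply Proposition \ref{prop:intervalminimize} to pass from the \numcasesinitial{} raw cases produced by the nine code types to the \numcasesaftereintmin{} cases that survive rigorous interval minimization over the admissible dihedral-angle region; all other cases are certified non-feasible because the corresponding sum-of-squared-determinants is bounded away from zero on the bounding hyperrectangle implied by Lemma \ref{lem:dihedralineq} and Corollary \ref{cor:dihedralbdsommerville}. Then I would invoke Proposition \ref{prop:eliminateafterintmin}, which narrows these \numcasesaftereintmin{} cases down to \numcasesafterelim{} by direct symbolic solving: each eliminated case either has no solution, has a closed-form solution whose dihedral angles are all of the form $2\pi/n$ (hence excluded by Theorem \ref{thm:2pi/ntile}), or, in code type \texttt{aaabcd}, yields a solid angle $\Omega_1$ failing to divide $4\pi$ (hence excluded by Lemma \ref{lem:typersolid}).

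Combining these three reductions gives exactly the \numcasesafterelim{} cases recorded in the final column of Table \ref{tab:codenumbers}, and these are precisely the cases addressed in Section \ref{sec:remainingcases}. The proof is therefore essentially bookkeeping: no new mathematical input is required, and the only risk is a miscount or a missed code-type correspondence. Since there is no genuine obstacle beyond verifying that the three propositions line up and that Table \ref{tab:codenumbers} correctly tallies the surviving cases, the argument is a short one-paragraph citation of the three results, yielding the claimed conclusion.
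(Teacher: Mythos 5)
Your proposal is correct and matches the paper exactly: the paper states this proposition explicitly as a summary of Propositions \ref{prop:codesetupcorrect}, \ref{prop:intervalminimize}, and \ref{prop:eliminateafterintmin}, giving no separate proof beyond chaining those three results together as you do. The only thing to note is that the paper leaves even this bookkeeping implicit, so your write-up is if anything slightly more complete than the original.
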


\section{Remaining Cases}
\label{sec:remainingcases}

Section \ref{sec:remainingcases} examines individually the cases that remain after code is used to eliminate candidates to tile with less surface area than the Sommerville No. 1. The code used to arrive at these remaining cases is described in Section \ref{sec:code}, 
and a summary of the number of remaining cases in each code type is given by Table \ref{tab:codenumbers}.
We find that out of \numcasesafterelim\ remaining cases, none tile with less surface area than the Sommerville No. 1; therefore, the Sommerville No. 1 is the least-area face-to-face tetrahedral tile even among non-orientation-preserving tiles.

Notice that each code type in Table \ref{tab:codenumbers} may correspond to more than one
non-characterized type (Table \ref{tab:codetypecorr}). For example, for code type \verb|abaacd|,
the edge lengths $b$ and $d$ may not be different if the tetrahedron comes from type (n).
In general, edge lengths denoted by the same letter are equal, but edge lengths denoted by different letters may or may not be equal.

The remaining cases are not particularly special; they are artifacts of the way the code was implemented
(Prop. \ref{prop:intervalminimize}). Choosing different determinants to base the code upon may produce different cases and possibly different numbers of cases. Regardless, there will be no cases that both tile and have less surface area than the Sommerville No. 1.

\subsection*{Remaining case 1 (RC1)}
RC1 belongs to the code type \texttt{aaabcd}. It is characterized by
\begin{equation}
\label{eq:RC1}
\text{RC1: }\begin{cases}
& \theta_{23}=\pi/3,	\\
& \theta_{24}=\pi/2, \\
& \theta_{34}=\pi/4, \\
& \theta_{12} + \theta_{13} + 3\theta_{14} = 2\pi.
\end{cases}
\end{equation}

\begin{prop}
\label{prop:RC1}
RC1 does not tile.
\end{prop}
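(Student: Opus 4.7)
The plan is to exploit the three specific dihedral angle values in \eqref{eq:RC1} to pin down $\theta_{14}$ exactly, and then apply the type-(r) solid-angle restriction at $V_1$ to produce a contradiction via Niven's theorem. Since RC1 corresponds to type (r), the edges $d_{12}=d_{13}=d_{14}$ are equal, so Lemma \ref{lem:equaledgetofaceratio} pins down the face-area ratios
\[
|F_2| : |F_3| : |F_4| = \sin\theta_{12} : \sin\theta_{13} : \sin\theta_{14}.
\]
I would then substitute the known values $\cos\theta_{23}=1/2$, $\cos\theta_{24}=0$, $\cos\theta_{34}=\sqrt{2}/2$ into the four linear relations given by Lemma \ref{lem:dihedralrelation}. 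The helpful simplification is that the $|F_3|$-row loses its $|F_1|$-term (because $\cos\theta_{24}=0$) and collapses to $\sin\theta_{13} = \sin(\theta_{12}+\theta_{14})$. The branch $\theta_{13} = \pi - \theta_{12} - \theta_{14}$ forces $\Omega_1=0$, contradicting Lemma \ref{lem:dihedralineq}, so $\theta_{13} = \theta_{12} + \theta_{14}$.

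Combining with the linear constraint from \eqref{eq:RC1} gives $\theta_{12} = \pi - 2\theta_{14}$ and $\theta_{13} = \pi - \theta_{14}$, reducing everything to the single unknown $\theta_{14}$. A short computation using the $|F_1|$ and $|F_2|$ rows (eliminating $|F_1|$) then yields the unique admissible value $\cos\theta_{14} = \sqrt{2}/4$. This algebraic step, where after applying double-angle identities the $\sin\theta_{14}$ factors cancel and a single linear equation in $\cos\theta_{14}$ remains, is the main place where care is required; everything else is bookkeeping.

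Finally, I invoke Lemma \ref{lem:typersolid}: a type-(r) tile must have $\Omega_1 = 4\pi/n$ for some $n \in \mb{N}$. Here $\Omega_1 = \theta_{12}+\theta_{13}+\theta_{14}-\pi = \pi - 2\theta_{14}$, so
\[
\cos(4\pi/n) = \cos(\pi - 2\theta_{14}) = 1 - 2\cos^2\theta_{14} = \frac{3}{4}.
\]
By Niven's theorem, the only rational values of $\cos\theta$ at rational multiples of $\pi$ are $0, \pm 1/2, \pm 1$, which excludes $3/4$; hence no such $n$ exists and RC1 cannot tile. If one prefers to avoid Niven, the same conclusion follows by brute force: Corollary \ref{cor:dihedralbdsommerville} restricts $n$ to a finite range, and strict monotonicity of $\sin(2\pi/n)$ together with $\sin(2\pi/18) < \sqrt{2}/4 < \sin(2\pi/17)$ rules out every integer candidate.
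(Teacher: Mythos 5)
Your proof is correct, and while it rests on the same tiling obstruction as the paper---Lemma \ref{lem:typersolid}, that a type-(r) tile must have $\Omega_1 = 4\pi/n$---the way you derive the contradiction is genuinely different. The paper's proof is computational: it takes the intervals for $\theta_{12}$ and $\theta_{13}$ output by the interval-minimization code, propagates them through \eqref{eq:RC1} to get $4\pi/\Omega_1 \in [17.3205, 17.4558]$, and observes that this interval contains no integer. You instead solve the system exactly by hand: the row of Lemma \ref{lem:dihedralrelation} corresponding to $\abs{F_3}$ does collapse to $\sin\theta_{13} = \sin(\theta_{12}+\theta_{14})$ once the face-area ratios $\abs{F_2}:\abs{F_3}:\abs{F_4} = \sin\theta_{12}:\sin\theta_{13}:\sin\theta_{14}$ are substituted, the spurious branch is correctly killed by $\Omega_1 > 0$ (Lemma \ref{lem:dihedralineq}), and I have checked that the $\abs{F_1}$ and $\abs{F_2}$ rows then force $\cos\theta_{14} = \sqrt{2}/4$ (consistent with the paper's numerical interval, since this gives $4\pi/\Omega_1 \approx 17.39$). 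The payoff is an exact value $\cos\Omega_1 = 3/4$, which Niven's theorem excludes as the cosine of a rational multiple of $\pi$; this makes your argument computer-free and self-contained where the paper's is not. Two small points: you should cite Table \ref{tab:codetypecorr} for the claim that code type \texttt{aaabcd} corresponds only to original type (r), which is what licenses the use of $d_{12}=d_{13}=d_{14}$; and in your fallback argument, the appeal to Corollary \ref{cor:dihedralbdsommerville} is both misplaced (it bounds dihedral angles, not the solid angle $\Omega_1$) and unnecessary---the observation that $\sin(\Omega_1/2) = \cos\theta_{14} = \sqrt{2}/4$ lies strictly between $\sin(2\pi/18)$ and $\sin(2\pi/17)$ already rules out every integer $n$ by monotonicity, with no finite-range reduction needed.
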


\begin{proof}
By Table \ref{tab:codetypecorr}, RC1 can only be of type (r).
By Lemma \ref{lem:typersolid}, if RC1 tiles, then the solid angle
$$\Omega_1 = \theta_{12} + \theta_{13} + \theta_{14} - \pi$$
(Lemma \ref{lem:dihedralineq}) must divide $4\pi$.
From interval minimization, we have obtained the interval
in which $\theta_{12}$ and $\theta_{13}$ must lie.
By \eqref{eq:RC1}, we can compute the interval in which
$\theta_{14}$ must lie. Then we can compute the interval
containing the solid angle $\Omega_1$ and find that
$$\frac{4\pi}{\Omega_1} \in \bracket{17.3205, 17.4558}.$$
So $\Omega_1$ does not divide $4\pi$, a contradiction.
The code for this computation is in the file
\verb|Type_aaabcd_remaining.nb|.
\end{proof}

\subsection*{Remaining case 2 (RC2)}
RC2 belongs to the code type \texttt{aaabcd}. It is characterized by
\begin{equation}
\label{eq:RC2}
\text{RC2: } \begin{cases}
& \theta_{23}=\pi/4,	\\
& \theta_{24}=\pi/2,	\\
& \theta_{34}=\pi/4, \\
& \theta_{12} + \theta_{13} + 3\theta_{14} = 2\pi.
\end{cases}
\end{equation}

\begin{prop}
\label{prop:RC2}
If RC2 tiles, it must be the Sommerville No. 3.
\end{prop}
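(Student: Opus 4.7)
The plan is to reduce RC2 to a purely geometric problem and show that the three dihedral-angle conditions, together with the type-(r) edge condition, already force the tetrahedron to be Sommerville No.~3; the fourth (linear) RC2 condition then holds automatically. By Table~\ref{tab:codetypecorr}, the code type \texttt{aaabcd} corresponds only to type (r), so we may assume $d_{12}=d_{13}=d_{14}$. Equivalently, $V_1$ is equidistant from $V_2,V_3,V_4$ and therefore lies on the perpendicular axis to the base plane through the circumcenter $O$ of triangle $V_2V_3V_4$.

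Place the base in the $xy$-plane and $V_1=(O_x,O_y,h)$ with $h>0$. For any base edge $e_{ij}$, the foot of perpendicular from $V_1$ to $e_{ij}$ coincides with the foot from $O$, so the dihedral angle satisfies $\tan\theta_{ij}=h/d_{ij}^\perp$, where $d_{ij}^\perp$ is the signed distance from $O$ to $e_{ij}$ (positive when $O$ is on the same side as the remaining base vertex). Then $\theta_{24}=\pi/2$ forces $d_{24}^\perp=0$, i.e., $O\in e_{24}$; by the converse of Thales' theorem the base triangle is right-angled at $V_3$ with $V_2V_4$ as hypotenuse, and $O$ is its midpoint. Setting $V_3=(0,0,0)$, $V_2=(p,0,0)$, $V_4=(0,q,0)$, the circumcenter is $(p/2,q/2,0)$, giving $d_{23}^\perp=q/2$ and $d_{34}^\perp=p/2$; then $\theta_{23}=\theta_{34}=\pi/4$ yield $h=q/2=p/2$ and hence $p=q$.

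The tetrahedron is now determined up to scaling: the base has legs $d_{23}=d_{34}=p$ and hypotenuse $d_{24}=p\sqrt{2}$, while $V_1=(p/2,p/2,p/2)$ gives $d_{12}=d_{13}=d_{14}=p\sqrt{3}/2$, matching the ratio $\sqrt{3}:2:2\sqrt{2}$ of Sommerville No.~3 (the type-(j) subcase of \texttt{aaabcd}). A short coordinate computation (or inspection of Table~\ref{tab:2pi/n} under the appropriate relabeling) yields $\theta_{12}=\theta_{14}=\pi/3$ and $\theta_{13}=2\pi/3$, so $\theta_{12}+\theta_{13}+3\theta_{14}=\pi/3+2\pi/3+\pi=2\pi$, verifying the remaining RC2 condition. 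The only delicate point is the sign convention for $d_{ij}^\perp$ when the base is obtuse, but this concern is moot since $\theta_{24}=\pi/2$ forces a right triangle at the outset, after which all relevant distances are unambiguously positive. Hence RC2 admits only the Sommerville No.~3 as a solution.
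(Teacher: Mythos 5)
Your proof is correct, but it takes a genuinely different route from the paper's. The paper invokes the tiling hypothesis through Lemma \ref{lem:typersolid} (for a type-(r) candidate the solid angle $\Omega_1$ must divide $4\pi$), combines this with the numerical intervals for $\theta_{12},\theta_{13}$ produced by the interval-minimization code to pin down $4\pi/\Omega_1=12$, deduces $\theta_{14}=\pi/3$ and $\theta_{12}+\theta_{13}=\pi$, and then solves the determinant condition of Lemma \ref{lem:dihedralrelation} to obtain the sextuple $\paren{\pi/3,2\pi/3,\pi/3,\pi/4,\pi/2,\pi/4}$. You instead exploit the geometry of $d_{12}=d_{13}=d_{14}$ directly: the apex sits over the circumcenter, $\theta_{24}=\pi/2$ places the circumcenter on $e_{24}$ and forces a right angle at $V_3$, and the two $\pi/4$ angles then fix the height and the legs, determining the tetrahedron up to scaling with no computer input and no tiling hypothesis. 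Your argument therefore proves something slightly stronger --- that the first three RC2 conditions together with the \texttt{aaabcd} edge condition already have a unique solution, with the linear relation $\theta_{12}+\theta_{13}+3\theta_{14}=2\pi$ holding automatically --- and it is self-contained and checkable by hand; what the paper's version buys is uniformity with the treatment of RC1, where the same solid-angle interval argument kills the case outright because the interval contains no integer. Two small points worth being explicit about: the coincidence of the feet of the perpendiculars from $V_1$ and from $O$ to a base edge is the three-perpendiculars theorem, and the derived equality $d_{23}=d_{34}$ is not in tension with the code type, since (as Section \ref{sec:remainingcases} notes) distinct letters in a code type are permitted to coincide, which is exactly why the answer lands in type (j).
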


\begin{proof}
We perform an analysis similar to that of RC1.
By Lemma \ref{lem:typersolid}, if RC1 tiles, then the solid angle
$\Omega_1$ must divide $4\pi$.
From interval minimization, we obtained the intervals in which
$\theta_{12}$ and $\theta_{13}$ must lie, which give
the following range for $\Omega_1$:
$$\frac{4\pi}{\Omega_1} \in \bracket{11.9772, 12.0241}.$$
Because the left-hand side must be an integer, it is exactly 12.
Substituting this back into \eqref{eq:RC2} yields
$$\theta_{14} = \frac{\pi}{3} \text{ and } \theta_{12} + \theta_{13} = \pi.$$
We can now solve for $\theta_{12}$ directly using
the determinant condition in Lemma \ref{lem:dihedralrelation}, yielding
$$(\theta_{12},\theta_{13},\theta_{14},\theta_{23},\theta_{24},\theta_{34})
= \paren{\frac{\pi}{3},\frac{2\pi}{3},\frac{\pi}{3},
\frac{\pi}{4},\frac{\pi}{2},\frac{\pi}{4}}.$$
Thus, by Table \ref{tab:2pi/n}, RC1 is the Sommerville No. 3. 
The code for this computation is in the file
\verb|Type_aaabcd_remaining.nb|.
\end{proof}

\subsection*{Remaining case 3 (RC3)}
RC3 belongs to the code type \texttt{abaacb}. It is characterized by
$$
\text{RC3: } \begin{cases}
& \theta_{24}=\pi/2,	\\
& \theta_{12} + \theta_{14} = \pi/2, \\
& \theta_{13} + \theta_{34} =\pi.
\end{cases}
$$

\begin{prop}
\label{prop:RC3}
RC3 has greater normalized surface area than the Sommerville No. 1.
\end{prop}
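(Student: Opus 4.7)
The plan is to show that the scale-invariant quantity $S^3/V^2$, minimized over all tetrahedra satisfying the RC3 constraints, exceeds the Sommerville No.\ 1 value of $(2^{11/6}\cdot 3^{2/3})^3=9\cdot 2^{11/2}$. First, I would parametrize the family: the three linear relations $\theta_{24}=\pi/2$, $\theta_{12}+\theta_{14}=\pi/2$, $\theta_{13}+\theta_{34}=\pi$ cut the six dihedral angles down to a three-parameter family, and imposing the vanishing determinant of Lemma \ref{lem:dihedralrelation} reduces this to a two-parameter family. The interval minimization step of Proposition \ref{prop:intervalminimize} has already provided a bounding box for the surviving variables, and this can be intersected with the inequalities of Lemma \ref{lem:dihedralineq} and Corollary \ref{cor:dihedralbdsommerville} to obtain a compact domain on which we optimize.

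Next, I would recover the edge lengths (up to scale) from the dihedral angles by the recipe of Lemma \ref{lem:lengthfromangle}: Lemma \ref{lem:dihedralrelation} yields the area vector $(\abs{F_1},\abs{F_2},\abs{F_3},\abs{F_4})$ from the null space of the cosine matrix, and then Lemma \ref{lem:projratioedgelen} gives each edge length in terms of the face areas and sines of the appropriate dihedral angles. The surface area $S$ is $\sum \abs{F_i}$, and the volume $V$ can be obtained either from the Cayley--Menger determinant in Lemma \ref{lem:edgecond} or directly from Lemma \ref{lem:volformula}. This expresses $S^3/V^2$ as an explicit, scale-invariant function of the two free parameters.

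With $S^3/V^2$ written as a concrete expression, I would apply the interval-minimization code of Stan Wagon used in Proposition \ref{prop:intervalminimize}, rigorously bounding $S^3/V^2$ from below on the compact parameter box and verifying that the lower bound exceeds $9\cdot 2^{11/2}$. For safety, I would use a slight numerical tolerance (as in Section \ref{sec:code}, where $36.5^\circ$ is substituted for the exact isoperimetric threshold $\theta_0$) so that round-off error cannot produce a spurious near-tie.

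The main obstacle is likely to be that the RC3 family genuinely has two free parameters, so unlike RC1 and RC2 we cannot pin the tetrahedron down to a single candidate by an integrality argument on a solid angle. The minimum of $S^3/V^2$ over this surface could in principle come close to the Sommerville value along some curve, so the interval minimization must subdivide the box finely enough to rigorously separate the two quantities. If direct minimization is too slow, a useful fallback would be to parametrize by $(\theta_{12},\theta_{13})$ alone, eliminate further variables symbolically, and combine monotonicity of the face-area formulas in each coordinate with coarse interval bounds to isolate the minimum analytically.
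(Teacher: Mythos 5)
Your plan is logically valid as an enclosure argument, but it rests on a misconception about RC3 and consequently takes a riskier, more roundabout route than is necessary. RC3 is a case of code type \texttt{abaacb}, and its defining constraints include not just the single vanishing determinant of Lemma \ref{lem:dihedralrelation} but the full strengthened determinant condition for type (n) in Table \ref{tab:10typesdetcond} (equivalently, the face-area relationships forced by $d_{12}=d_{14}=d_{23}$ and $d_{13}=d_{34}$). Those give four scalar equations in the three angle parameters left after the linear relations, so the system is overdetermined by one --- and in this case it happens to be \emph{solvable in closed form}. That is the paper's entire proof: Mathematica solves the system exactly, the resulting tetrahedron has $S/V^{2/3}\approx 8.395 > 7.413$, done. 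Your assertion that ``the RC3 family genuinely has two free parameters'' and cannot be pinned down is therefore wrong; the equations you dropped are exactly what pin it down.

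The gap this creates in your argument is concrete: by retaining only the single determinant condition you are minimizing $S^3/V^2$ over a two-dimensional relaxation that contains many tetrahedra which are not of type (n) at all, and there is no a priori reason the infimum over that larger set exceeds $9\cdot 2^{11/2}\approx 407.3$. If it does not, your interval minimization returns a lower bound below the Sommerville value and proves nothing, with no obvious repair other than reinstating the dropped equations --- at which point you are back to the paper's argument. There is also a secondary technical wrinkle you would need to address: rigorously minimizing $S^3/V^2$ \emph{subject to} the implicit determinant constraint is harder than the box minimization used in Proposition \ref{prop:intervalminimize}, which minimizes a sum of squared determinants over a hyperrectangle precisely to avoid constrained optimization. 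The fix is simply to include all the equations of Table \ref{tab:codetypeassumption} for \texttt{abaacb}, solve the system symbolically, and evaluate the normalized surface area at the resulting closed-form solution.
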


\begin{proof}
Mathematica can solve for dihedral angles of RC3 in closed form.
By direct computation of surface area $S$ and volume $V$, RC3 has
normalized surface area $S/V^{2/3}$ of approximately 8.395.
The normalized surface area of the Sommerville No. 1 is
approximately 7.413.
The code for this computation is in the file
\verb|Type_abaacb_remaining.nb|.
\end{proof}

%

\subsection*{Remaining case 4 (RC4)}
RC4 belongs to the code type \texttt{abaacd}. It is characterized by
\begin{equation}
\label{eq:RC4}
\text{RC4: } \begin{cases}
& \theta_{13}=\pi/2, \\
& \theta_{24}=\pi/2, \\
& \theta_{34}=\pi/3, \\
& \theta_{12} + 2\theta_{23} = \pi.
\end{cases}
\end{equation}

\begin{prop}
\label{prop:RC4}
RC4 belongs to Goldberg's first infinite family.
\end{prop}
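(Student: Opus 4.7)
The plan is to show that after a suitable vertex relabeling the dihedral-angle data of any tetrahedron in RC4 coincide with those of some member of Goldberg's first family, and then invoke Proposition \ref{prop:dihedralfixes} to conclude the tetrahedra themselves agree.

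In Goldberg's first family (Figure \ref{fig:goldbergfamily1}), the three edges of length $b$ form a path through the vertices in the order $V_1,V_2,V_3,V_4$ carrying dihedral angles $\alpha,\pi-2\alpha,\alpha$, the two edges of length $c$ have dihedral angle $\pi/2$, and the lone edge of length $a$ has dihedral angle $\pi/3$. In RC4 (code type \texttt{abaacd}, so $d_{12}=d_{14}=d_{23}$), the three equal edges form a path through $V_4,V_1,V_2,V_3$, the $\pi/3$ angle sits on the opposite edge $e_{34}$, and the two $\pi/2$ angles sit on $e_{13}$ and $e_{24}$. Relabeling Goldberg's $(V_1,V_2,V_3,V_4)\to(V_4,V_1,V_2,V_3)$, the identification works provided $\theta_{14}=\theta_{23}=\alpha$, in which case the remaining identity $\theta_{12}=\pi-2\alpha$ is exactly the linear constraint of RC4.

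To prove $\theta_{14}=\theta_{23}$, I would feed the known angles $\theta_{13}=\theta_{24}=\pi/2$ and $\theta_{34}=\pi/3$ into the four area-projection equations of Lemma \ref{lem:dihedralrelation}, and combine with Lemma \ref{lem:equaledgetofaceratio} applied to the equalities $d_{12}=d_{23}$ and $d_{12}=d_{14}$, which give $\abs{F_1}\sin\theta_{23}=\abs{F_3}\sin\theta_{12}$ and $\abs{F_4}\sin\theta_{12}=\abs{F_2}\sin\theta_{14}$. Using $\theta_{12}=\pi-2\theta_{23}$ one obtains $\sin\theta_{12}=2\sin\theta_{23}\cos\theta_{23}$ and $\cos\theta_{12}=1-2\cos^{2}\theta_{23}$; the first equal-edge relation then yields $\abs{F_1}=2\abs{F_3}\cos\theta_{23}$, substituting into the projection formula for $\abs{F_4}$ forces $\abs{F_4}=\abs{F_3}$, and eliminating $\abs{F_1},\abs{F_2},\abs{F_4}$ from the projection formula for $\abs{F_3}$ collapses to
\[
\cos^{2}\theta_{14}+\cos\theta_{23}\cos\theta_{14}-2\cos^{2}\theta_{23}=0,
\]
which factors as $(\cos\theta_{14}-\cos\theta_{23})(\cos\theta_{14}+2\cos\theta_{23})=0$.

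The main (and only nonroutine) obstacle is ruling out the spurious factor $\cos\theta_{14}=-2\cos\theta_{23}$. Since $\theta_{12}=\pi-2\theta_{23}$ must be positive we have $\theta_{23}\in(0,\pi/2)$, so $\cos\theta_{23}>0$ and hence $\cos\theta_{14}<-\cos\theta_{23}=\cos(\pi-\theta_{23})$; monotonicity of cosine on $(0,\pi)$ then gives $\theta_{14}+\theta_{23}>\pi$, directly contradicting the second inequality of Lemma \ref{lem:dihedralineq}, which for $\theta_{13}=\theta_{24}=\pi/2$ reduces to $\theta_{14}+\theta_{23}<\pi$. Hence $\theta_{14}=\theta_{23}$, the six dihedral angles of the RC4 tetrahedron agree with those of Goldberg's first family at parameter $\alpha=\theta_{14}$ under the relabeling above, and Proposition \ref{prop:dihedralfixes} closes the identification.
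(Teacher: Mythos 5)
Your proof is correct, and it reaches the same endpoint as the paper — the sextuple $(\theta_{12},\dots,\theta_{34})=\paren{\pi-2\alpha,\tfrac{\pi}{2},\alpha,\alpha,\tfrac{\pi}{2},\tfrac{\pi}{3}}$ followed by Proposition \ref{prop:dihedralfixes} — but it gets there by a genuinely different route for the one nontrivial step. The paper simply asserts that Mathematica solves the strengthened determinant condition to give $\theta_{14}=(\pi-\theta_{12})/2$; you instead derive $\theta_{14}=\theta_{23}$ by hand from the four area-projection identities of Lemma \ref{lem:dihedralrelation} together with Lemma \ref{lem:equaledgetofaceratio}, and your intermediate claims all check out: $\abs{F_1}=2\abs{F_3}\cos\theta_{23}$, then $\abs{F_4}=\abs{F_3}$ from the $\abs{F_4}$-projection, then (using the $\abs{F_2}$-projection to write $\abs{F_2}=\abs{F_3}(\cos\theta_{23}+\cos\theta_{14})$) the quadratic $\cos^2\theta_{14}+\cos\theta_{23}\cos\theta_{14}-2\cos^2\theta_{23}=0$, whose spurious factor you correctly kill with $\theta_{13}+\theta_{14}+\theta_{24}+\theta_{23}<2\pi$ from Lemma \ref{lem:dihedralineq}. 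What your approach buys is a self-contained, verifiable replacement for a computer-algebra step, plus an explicit vertex relabeling onto Figure \ref{fig:goldbergfamily1}, which the paper leaves implicit. Two small remarks: your prose cites the relation $\abs{F_4}\sin\theta_{12}=\abs{F_2}\sin\theta_{14}$ from $d_{12}=d_{14}$, but the quadratic you display actually comes from the $\abs{F_2}$-projection identity rather than from that relation (using the relation instead yields $\sin(\theta_{14}-\theta_{23})=0$ directly, with no spurious root); and the unused $\abs{F_1}$-projection identity gives $\cos\theta_{14}=\cos\theta_{23}$ immediately, so the case analysis could be avoided altogether. Neither affects correctness.
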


\begin{proof}
Mathematica can solve for $\theta_{14}$ as
$$\theta_{14}=\frac{\pi-\theta_{12}}{2}.$$
By this and \eqref{eq:RC4}, the dihedral angles of RC4 are 
$$
(\theta_{12},\theta_{13},\theta_{14},\theta_{23},\theta_{24},\theta_{34})
= \paren{\pi-2\alpha, \frac{\pi}{2}, \alpha, \alpha, \frac{\pi}{2}, \frac{\pi}{3}}.
$$
By Proposition \ref{prop:dihedralfixes}, 
RC4 belongs to the first Goldberg family.
\end{proof}

\subsection*{Remaining case 5 (RC5)}
RC5 belongs to the code type \texttt{abaacd}. It is characterized by
\begin{equation}
\label{eq:RC5}
\text{RC5: } \begin{cases}
& \theta_{13}=\pi/2,	\\
& \theta_{24}=\pi/2,	\\
& \theta_{34}=\pi/3, \\
& \theta_{12} + \theta_{14} + \theta_{23} = \pi.
\end{cases}
\end{equation}

\begin{prop}
\label{prop:RC5}
RC5 belongs to Goldberg's first infinite family.
\end{prop}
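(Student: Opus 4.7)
The approach closely parallels the proof of Proposition \ref{prop:RC4}. First I would use the four explicit constraints of RC5, namely $\theta_{13}=\pi/2$, $\theta_{24}=\pi/2$, $\theta_{34}=\pi/3$, and $\theta_{23}=\pi-\theta_{12}-\theta_{14}$, together with the determinant identity of Lemma \ref{lem:dihedralrelation}, to pin down the remaining degrees of freedom. Substituting $\cos\theta_{13}=\cos\theta_{24}=0$ and $\cos\theta_{34}=1/2$ collapses the $4\times 4$ determinant to a single polynomial relation in $\cos\theta_{12}$, $\cos\theta_{14}$, and $\cos\theta_{23}$.

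Next, using the identity $\cos\theta_{12}=-\cos\theta_{14}\cos\theta_{23}+\sin\theta_{14}\sin\theta_{23}$ coming from $\theta_{12}=\pi-\theta_{14}-\theta_{23}$, and writing $y=\cos\theta_{14}$ and $z=\cos\theta_{23}$, I expect (and would verify by direct computation or in Mathematica) that the determinant condition simplifies to the perfect square
\[
\paren{y\sqrt{1-z^{2}}-z\sqrt{1-y^{2}}}^{2}=0.
\]
Since $\theta_{14},\theta_{23}\in(0,\pi)$ both square roots are positive, so this forces $y=z$ and hence $\theta_{14}=\theta_{23}$. Writing $\alpha:=\theta_{14}$ yields $\theta_{12}=\pi-2\alpha$ and the dihedral-angle sextuple
\[
(\theta_{12},\theta_{13},\theta_{14},\theta_{23},\theta_{24},\theta_{34})=(\pi-2\alpha,\ \pi/2,\ \alpha,\ \alpha,\ \pi/2,\ \pi/3).
\]

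Finally, I would verify that the vertex relabeling $1\mapsto 4$, $2\mapsto 1$, $3\mapsto 2$, $4\mapsto 3$ carries the first Goldberg sextuple $(\alpha,\pi/2,\pi/3,\pi-2\alpha,\pi/2,\alpha)$ of Figure \ref{fig:goldbergfamily1} onto the sextuple above. By Proposition \ref{prop:dihedralfixes}, this identifies each RC5 tetrahedron with the corresponding member of the first Goldberg family up to translation, rotation, and scaling. The main obstacle is confirming the perfect-square factorization cleanly; should the algebraic simplification not be as tidy as anticipated, the fallback is to have Mathematica enumerate the real branches of the determinant equation and discard any branches violating the dihedral-angle bounds of Corollary \ref{cor:dihedralbdsommerville}, leaving only the Goldberg branch.
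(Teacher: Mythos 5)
Your proposal is correct and follows essentially the same route as the paper: use the determinant condition of Lemma \ref{lem:dihedralrelation} together with the RC5 constraints to force $\theta_{14}=\theta_{23}$ (the paper phrases this as $\theta_{14}=(\pi-\theta_{12})/2$, obtained via Mathematica), arrive at the sextuple $(\pi-2\alpha,\pi/2,\alpha,\alpha,\pi/2,\pi/3)$, and invoke Proposition \ref{prop:dihedralfixes} to place it in Goldberg's first family. Your anticipated perfect-square factorization does in fact hold --- the determinant reduces to $\cos^2\theta_{14}\sin^2\theta_{23}+\cos^2\theta_{23}\sin^2\theta_{14}-2\cos\theta_{14}\cos\theta_{23}\sin\theta_{14}\sin\theta_{23}=0$ --- so your hand computation, and the explicit vertex relabeling matching Figure \ref{fig:goldbergfamily1}, simply make explicit what the paper leaves to the computer.
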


\begin{proof}
As in the case of RC4, Mathematica can solve for $\theta_{14}$ as
$$\theta_{14}=\frac{\pi-\theta_{12}}{2}.$$
From (\ref{eq:RC5}),
the dihedral angles of RC5 are
$$
(\theta_{12},\theta_{13},\theta_{14},\theta_{23},\theta_{24},\theta_{34})
= \paren{\pi-2\alpha, \frac{\pi}{2}, \alpha, \alpha, \frac{\pi}{2}, \frac{\pi}{3}}.
$$
By Proposition \ref{prop:dihedralfixes}, RC5 belongs to the first Goldberg family.	
\end{proof}

\subsection*{Remaining case 6 (RC6)}
RC6 belongs to the code type \texttt{abcacd}. It is characterized by
\begin{equation}
\label{eq:RC6}
\text{RC6: } \begin{cases}
& \theta_{13}=\pi/2,	\\
& \theta_{34}=\pi/3, \\
& \theta_{12} + \theta_{23} = \pi/2, \\
& \theta_{14} + \theta_{24} = \pi.
\end{cases}
\end{equation}

\begin{prop}
\label{prop:RC6}
RC6 belongs to Goldberg's second infinite family.
\end{prop}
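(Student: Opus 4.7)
The strategy parallels the proofs of Propositions \ref{prop:RC4} and \ref{prop:RC5}: parametrize the dihedral angles of RC6 in a form that matches Goldberg's second family up to a relabeling of vertices, then invoke Proposition \ref{prop:dihedralfixes}. Setting $\theta_{12} = \pi/2 - \alpha$ and $\theta_{14} = \pi/2 - \beta$, the two linear conditions of \eqref{eq:RC6} force $\theta_{23} = \alpha$ and $\theta_{24} = \pi/2 + \beta$. Together with $\theta_{13} = \pi/2$ and $\theta_{34} = \pi/3$, the dihedral-angle sextuple becomes
$$(\theta_{12}, \theta_{13}, \theta_{14}, \theta_{23}, \theta_{24}, \theta_{34}) = \left(\frac{\pi}{2}-\alpha,\; \frac{\pi}{2},\; \frac{\pi}{2}-\beta,\; \alpha,\; \frac{\pi}{2}+\beta,\; \frac{\pi}{3}\right).$$
Applying the vertex permutation $V_1 \leftrightarrow V_3$ yields the sextuple $(\alpha, \pi/2, \pi/3, \pi/2-\alpha, \pi/2+\beta, \pi/2-\beta)$, which is precisely the sextuple read off from Figure \ref{fig:goldbergfamily2} for Goldberg's second family.

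The next step is to apply the determinant condition of Lemma \ref{lem:dihedralrelation} to the $(\alpha, \beta)$-parametrized sextuple. Since RC6 (after using its linear constraints) carries two free parameters before imposing the determinant condition, while Goldberg's second family carries exactly one degree of freedom, the determinant equation should cut out a one-parameter algebraic curve in $(\alpha, \beta)$-space that agrees with the curve implicit in Goldberg's construction. As in the proofs of RC4 and RC5, I would let Mathematica solve the system directly and read off the $(\alpha,\beta)$-dependence.

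Once the two one-parameter families of sextuples are shown to coincide, Proposition \ref{prop:dihedralfixes} immediately concludes that each tetrahedron in RC6 is a relabeled copy of a tetrahedron in Goldberg's second family. The main obstacle is confirming that the relation between $\alpha$ and $\beta$ produced by the determinant condition matches the relation implicit in Goldberg's edge-length identities from Figure \ref{fig:goldbergfamily2} (namely $a^2/12 + d^2 = b^2$, $a^2/3 + b^2 = c^2$, $\sin\alpha = b/c$, and $\sin\beta = a/(2\sqrt{3}b)$). Because both relations are polynomial in $(\sin\alpha, \cos\alpha, \sin\beta, \cos\beta)$ after clearing square roots, this comparison should reduce to a routine symbolic computation analogous to the RC4 and RC5 checks.
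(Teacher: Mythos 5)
Your reduction of \eqref{eq:RC6} to the two-parameter sextuple $(\alpha,\pi/2,\pi/3,\pi/2-\alpha,\pi/2+\beta,\pi/2-\beta)$ after the swap $V_1\leftrightarrow V_3$ is correct, and it does match the symbolic form of Goldberg's second family in Figure \ref{fig:goldbergfamily2}. But this is exactly where RC6 differs from RC4, RC5, and RC7, and where the gap lies. In those cases the matched sextuple has a \emph{single} free parameter, so Proposition \ref{prop:dihedralfixes} immediately identifies the tetrahedron with the corresponding family member. Here the form-match leaves two free parameters, while Goldberg's second family is only a one-parameter curve in the $(\alpha,\beta)$-plane; membership therefore hinges entirely on showing that every \emph{realizable} pair $(\alpha,\beta)$ lies on Goldberg's curve. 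You defer this to ``a routine symbolic computation,'' but that is precisely the step that is not routine: the determinant condition of Lemma \ref{lem:dihedralrelation} is only necessary, not sufficient (compare the extra verification needed in Proposition \ref{prop:2pi/nvalid}), so its zero locus in $(\alpha,\beta)$ could a priori contain branches other than Goldberg's curve, and you would still have to decide which branches correspond to actual tetrahedra and dispose of the rest. Note also that the paper's pipeline in Proposition \ref{prop:eliminateafterintmin} already attempted to solve these overdetermined systems symbolically, and RC6 is one of the cases that survived that attempt --- circumstantial evidence that the direct solve is harder than you anticipate.

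The paper sidesteps the issue with a geometric doubling argument: two copies of RC6 glued along the face opposite $V_3$ form a single tetrahedron, because $\theta_{14}+\theta_{24}=\pi$ flattens the join; the doubled tetrahedron's sextuple $(\alpha,\pi-2\alpha,\pi/2,\pi/2,\pi/3,\alpha)$ contains only \emph{one} parameter (the $\beta$'s cancel in the gluing), so Proposition \ref{prop:dihedralfixes} places it in Goldberg's first family, and RC6 is then its half along the edge of dihedral angle $\pi/3$, i.e., a second-family member by the very construction of that family. Your route could in principle be completed, but as written the decisive verification --- that realizability forces exactly Goldberg's relation between $\alpha$ and $\beta$ and nothing else --- is asserted rather than carried out, and it is the entire content of the proposition.
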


\begin{proof}
The idea of the proof is to show that two copies of RC6
form a tetrahedron in Goldberg's first infinite family.
Put two copies of RC6, $V_1V_2V_3V_4$ and $V_1'V_2'V_3'V_4'$,
together by rotating and attaching the faces opposite vertex $V_3$
as in Figure \ref{fig:RC6}.

\begin{figure}[ht]
\begin{tikzpicture}[scale=1.5,transform shape]
\draw [black] (0,3) node [anchor=south] {\scriptsize $V_2, V_1'$} -- (-2.6,0.6) node [anchor=east] {\scriptsize $V_3$};
\draw [black] (0,3) -- (-0.8,-1) node [anchor=north] {\scriptsize $V_1, V_2'$};
\draw [black] (0,3) -- (2.2,-0.2) node [anchor=west] {\scriptsize $V_3'$};
\draw [black] (-2.6,0.6) -- (-0.8,-1);
\draw [dashed, black] (-2.6,0.6) -- (2.2,-0.2);
\draw [black] (-0.8,-1) -- (2.2,-0.2);

\draw [dashed, black] (0,3) -- (0.2,0.1);
\draw [dashed, black] (-0.8,-1) -- (0.2,0.1);
\node [anchor=south east] at (0.2,0.1) {\scriptsize $V_4$};

\node [anchor=center] at (1.5,1.5) {\scriptsize $b, \pi/2$};
\node [anchor=center] at (1,-0.7) {\scriptsize $a, \alpha$};
\node [anchor=center] at (-0.1,-0.4) {\scriptsize $c$};
\node [anchor=center] at (0.2,1.4) {\scriptsize $c$};
\node [anchor=center] at (1.1,0.2) {\scriptsize $d, \pi/3$};
\node [anchor=center] at (-1.5,2) {\scriptsize $a, \alpha$};
\node [anchor=center] at (-2.1,-0.4) {\scriptsize $b, \pi/2$};
\node [anchor=center] at (-1.3,0.6) {\scriptsize $d, \pi/3$};
\node [anchor=center] at (1.5,2.9) {\scriptsize $a, \pi-2\alpha$};

\draw[->, thick] (-0.15, 2.2) to [out = 0, in = 270](1.4, 2.8);
\end{tikzpicture}
\caption{Tetrahedron constructed by rotating RC6 and attaching the faces opposite vertex $V_3$, labeled with the same vertices as RC6. Note that $\theta_{14} + \theta_{24}= \pi$, so $V_3'$ lies on the same plane $V_1,V_3,V_4$ and also on the same plane as $V_2,V_3,V_4$.}
\label{fig:RC6}
\end{figure}

Because $\theta_{14} + \theta_{24}= \pi$, $V_3'$ lies on the same plane
as $V_1,V_3,V_4$ and also on the same plane as $V_2,V_3,V_4$.
Thus $V_3'$ lies on the same line as $V_3$ and $V_4$.
We conclude that the two copies form a tetrahedron $V_2V_3V_1V_3'$
with $V_4$ being the midpoint of the edge $V_3V_3'$.
This new tetrahedron has a sextuple of dihedral angles
$$\paren{\alpha,\pi-2\alpha,\frac{\pi}{2},\frac{\pi}{2},\frac{\pi}{3},\alpha},$$
which corresponds to Goldberg's first infinite family.
By Proposition \ref{prop:dihedralfixes}, it belongs to Goldberg's first infinite family.
Because RC6 is formed by slicing a member of Goldberg's first
infinite family in half along an edge with dihedral angle
$\pi/3$, RC6 belongs to Goldberg's second infinite family.
\end{proof}

\subsection*{Remaining case 7 (RC7)}
RC7 belongs to the code type \texttt{abccbb}. It is characterized by
\begin{equation}
\label{eq:RC7}
\text{RC7: } \begin{cases}
& \theta_{12}=\pi/3, \\
& \theta_{14}= \theta_{23}=\pi/2, \\
& \theta_{13} = \theta_{24}, \\
& 2\theta_{13} + \theta_{34} = \pi.
\end{cases} 
\end{equation}

\begin{prop}
\label{prop:RC7}
RC7 is a member of Goldberg's first infinite family.
\end{prop}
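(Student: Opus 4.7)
The plan is to mirror the proofs of RC4, RC5, and RC6: parametrize RC7 by a single angle, identify its dihedral-angle sextuple with that of a Goldberg first family tetrahedron (up to a vertex relabeling), and invoke Proposition \ref{prop:dihedralfixes}.

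First, I set $\alpha := \theta_{13} = \theta_{24}$. The fourth equation of \eqref{eq:RC7} then gives $\theta_{34} = \pi - 2\alpha$, so the full ordered sextuple of RC7 is
$$(\theta_{12}, \theta_{13}, \theta_{14}, \theta_{23}, \theta_{24}, \theta_{34}) = \paren{\tfrac{\pi}{3},\ \alpha,\ \tfrac{\pi}{2},\ \tfrac{\pi}{2},\ \alpha,\ \pi - 2\alpha}.$$
From Figure \ref{fig:goldbergfamily1}, a generic Goldberg first family tetrahedron has sextuple $(\alpha, \pi/2, \pi/3, \pi - 2\alpha, \pi/2, \alpha)$. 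Applying the vertex relabeling $\sigma\colon 1\mapsto 1,\ 2\mapsto 4,\ 3\mapsto 2,\ 4\mapsto 3$ (which does not change the underlying unlabeled tetrahedron) transforms its sextuple via $\theta'_{ij} = \theta^G_{\sigma(i)\sigma(j)}$; one checks $\theta^G_{14}=\pi/3$, $\theta^G_{12}=\alpha$, $\theta^G_{13}=\pi/2$, $\theta^G_{24}=\pi/2$, $\theta^G_{34}=\alpha$, $\theta^G_{23}=\pi - 2\alpha$, reproducing exactly the RC7 sextuple above. By Proposition \ref{prop:dihedralfixes}, RC7 is congruent, up to translation, rotation, and scaling, to this relabeled Goldberg tetrahedron. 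Hence RC7 belongs to Goldberg's first infinite family with parameter $\alpha = \theta_{13}$.

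No serious obstacle is expected, as this reduces to the same sextuple-matching argument used for RC4 and RC5. The only point requiring attention is choosing the correct vertex permutation, which is essentially pinned down by the combinatorial pattern of the angles: in both RC7 and Goldberg's first family, the two $\pi/2$-edges are opposite, the two $\alpha$-edges are opposite, and the $\pi/3$-edge is opposite the $\pi - 2\alpha$-edge. Once this structural match is observed, the permutation $\sigma$ above is forced, and Proposition \ref{prop:dihedralfixes} completes the identification. As a consequence (using Theorem \ref{thm:goldbergmin}), RC7 cannot improve upon the Sommerville No. 1.
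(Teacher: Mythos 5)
Your proposal is correct and follows essentially the same route as the paper: the paper likewise reads off the sextuple $\paren{\pi/3,\alpha,\pi/2,\pi/2,\alpha,\pi-2\alpha}$ from \eqref{eq:RC7} and concludes via Proposition \ref{prop:dihedralfixes}. Your explicit verification of the vertex relabeling matching this sextuple to Figure \ref{fig:goldbergfamily1} is a welcome extra detail the paper leaves implicit, and your check is accurate.
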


\begin{proof}
From (\ref{eq:RC7}), the dihedral angles of RC7 are $$(\theta_{12},\theta_{13},\theta_{14},\theta_{23},\theta_{24},\theta_{34})
= \paren{\frac{\pi}{3}, \alpha, \frac{\pi}{2}, \frac{\pi}{2}, \alpha, \pi-2\alpha}.$$
By Proposition \ref{prop:dihedralfixes}, RC7 belongs to the first Goldberg family.
\end{proof}

We conclude Section \ref{sec:10types} by proving that the Sommerville No. 1
has less surface area than any tile of the ten non-characterized types of Figure \ref{fig:tetrahedrontable}.

\begin{theorem}
\label{thm:10types_sono1best}
The Sommerville No. 1. has less surface area than
any tetrahedral tile of the ten non-characterized types of Figure \ref{fig:tetrahedrontable}.
\end{theorem}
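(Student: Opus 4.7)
The plan is to combine Proposition \ref{prop:allfromcode} with the case-by-case work already carried out in Propositions \ref{prop:RC1}--\ref{prop:RC7}. Proposition \ref{prop:allfromcode} reduces the entire question to the seven concrete cases RC1 through RC7, so the argument reduces to a short dispatch with no new geometric content; all the heavy lifting has been done in Sections \ref{sec:10types}--\ref{sec:remainingcases}.

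I dispatch the cases in order. RC1 is eliminated by Proposition \ref{prop:RC1}, which shows it does not tile at all. RC2 is handled by Proposition \ref{prop:RC2}: any tile matching RC2's dihedral-angle system must be the Sommerville No. 3, which has surface area strictly larger than that of the Sommerville No. 1 by Corollary \ref{cor:prevknown_so1best}. RC3 is immediate from Proposition \ref{prop:RC3}, which computes its normalized surface area directly and observes it exceeds that of the Sommerville No. 1. The four remaining cases RC4, RC5, RC6, RC7 are placed by Propositions \ref{prop:RC4}, \ref{prop:RC5}, \ref{prop:RC6}, and \ref{prop:RC7} into Goldberg's first or second infinite family, and Theorem \ref{thm:goldbergmin} identifies the unique surface-area minimizer across all three Goldberg families as the Sommerville No. 1.

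The only point that warrants a moment's thought is that Theorem \ref{thm:goldbergmin} gives equality precisely at the Sommerville No. 1, so to obtain the strict inequality promised by the theorem I must check that none of the RC4--RC7 tetrahedra actually coincides with the Sommerville No. 1. This is immediate from the classification in Table \ref{tab:tetrahedratypes}: the Sommerville No. 1 is of type (b), lying in the orientation-preserving group of Figure \ref{fig:tetrahedrontable}, which is disjoint from the ten non-characterized types under consideration; in particular, its edge-length partition $(4,2)$ does not match any of the partitions of the non-characterized types. Hence every tile of the ten non-characterized types that lies in a Goldberg family is distinct from the Sommerville No. 1 and therefore has strictly greater surface area, completing the proof. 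There is no genuine ``main obstacle'' here, since every required estimate has been stockpiled in the preceding sections; the task is purely to assemble the pieces.
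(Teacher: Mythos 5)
Your proof is correct and follows essentially the same route as the paper: reduce to RC1--RC7 via Proposition \ref{prop:allfromcode}, eliminate RC1 as a non-tile, RC3 by direct area computation, and place the rest among previously identified tiles where the Sommerville No.~1 is already known to be optimal (the paper cites Corollary \ref{cor:prevknown_so1best} where you cite Theorem \ref{thm:goldbergmin} directly, which amounts to the same thing). Your extra check that none of RC4--RC7 can coincide with the Sommerville No.~1 itself, via the edge-length partitions, is a point the paper leaves implicit and is a welcome bit of care for the strictness of the inequality.
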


\begin{proof}
By Proposition \ref{prop:allfromcode}, the Sommerville No. 1
has less surface area than any tile of the ten non-characterized types of Figure \ref{fig:tetrahedrontable}
except possibly for the remaining cases RC1--RC7.
By Proposition \ref{prop:RC1}, RC1 does not tile.
By Proposition \ref{prop:RC3}, RC3 has greater surface area than the Sommerville No. 1.
By Propositions \ref{prop:RC2}, \ref{prop:RC4}, \ref{prop:RC5}, \ref{prop:RC6} and \ref{prop:RC7}, the other remaining cases are among the tiles identified in previous literature; 
by Corollary \ref{cor:prevknown_so1best}, the Sommerville No. 1 minimizes surface area among these cases.
Therefore the Sommerville No. 1 has less surface area than
any tile of the ten non-characterized types of Figure \ref{fig:tetrahedrontable}.
\end{proof}

\section{The Best Tetrahedral Tile}
\label{sec:besttile}

Section \ref{sec:besttile} concludes the paper and provides some conjectures about tetrahedral tiles and $n$-hedral tiles in general. Extension to other $n$-hedral tiles and non-face-to-face tiles is briefly discussed. We begin with Theorem \ref{thm:bigtheorem}, which states that the Sommerville No. 1 is the best tetrahedral tile.

\begin{theorem}
\label{thm:bigtheorem}
The least-surface-area, face-to-face tetrahedral tile
of unit volume is uniquely the Sommerville No. 1.
\end{theorem}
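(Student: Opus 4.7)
The plan is to assemble Theorem \ref{thm:bigtheorem} from the two main intermediate results that have already been established: Theorem \ref{thm:15types_sono1best} and Theorem \ref{thm:10types_sono1best}. The overall strategy rests on the classification of all tetrahedra into 25 types (Prop. \ref{prop:25typescomplete}), partitioned in Figure \ref{fig:tetrahedrontable} into four groups: the orientation-preserving group (ten types), the Edmonds group (type (c)), the $2\pi/n$ group (types (g), (p), (w), (y)), and the ten non-characterized types. Since every tetrahedron must lie in exactly one type, it suffices to compare the Sommerville No. 1 to the surface-area minimizing tile within each group.

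First I would invoke Theorem \ref{thm:15types_sono1best} to conclude that the Sommerville No. 1 uniquely minimizes surface area among tiles in the orientation-preserving, Edmonds, and $2\pi/n$ groups. This theorem itself rests on the following complete characterizations: the orientation-preserving group only tiles via Sommerville Nos. 1--4 (Prop. \ref{prop:OPsommerville14}), the Edmonds group tiles only in the cases identified by Edmonds \cite{Ed} (Prop. \ref{prop:somxi_tile}), and the $2\pi/n$ group contains no tiles at all (Thm. \ref{thm:2pingp_notile}). Comparison of surface areas against these explicit tiles was carried out in Corollary \ref{cor:prevknown_so1best} via Theorem \ref{thm:goldbergmin}.

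Next, I would invoke Theorem \ref{thm:10types_sono1best} to handle the remaining ten non-characterized types. This step, which constitutes the main body of the paper, is where the real work lives: the edge-length graphs of Section \ref{sec:edgelengraphs}, the isoperimetric lower bound on dihedral angles (Cor. \ref{cor:dihedralbdsommerville}), the case reduction of Section \ref{sec:code}, and the case-by-case dispatch of the seven remaining cases RC1--RC7 in Section \ref{sec:remainingcases}, all combine to show that no tile of these ten types beats the Sommerville No. 1 in surface area.

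Combining both invocations, every face-to-face tetrahedral tile of unit volume has surface area at least that of the Sommerville No. 1, with equality only for the Sommerville No. 1 itself. Since this proof is essentially a synthesis of results already proved earlier in the paper, there is no genuine obstacle at this final stage; the difficulty has been absorbed into the preceding sections. The only subtlety to note is that uniqueness follows from the \emph{strict} inequality established against the non-characterized types (Thm. \ref{thm:10types_sono1best}) combined with the \emph{unique} minimization in the characterized groups (Thm. \ref{thm:15types_sono1best}), so the concluding statement can be asserted in the sharp form claimed.
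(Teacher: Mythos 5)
Your proposal matches the paper's own proof exactly: both cite the completeness of the 25-type classification (Prop.~\ref{prop:25typescomplete}), then combine Theorem~\ref{thm:15types_sono1best} for the fifteen characterized types with Theorem~\ref{thm:10types_sono1best} for the ten non-characterized types to conclude unique minimization. Your added remark on how uniqueness follows from the strict inequality in the latter theorem is a correct and slightly more explicit account of a point the paper leaves implicit.
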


\noindent (See Figure \ref{fig:sommerville1} of the Introduction.)

\begin{proof}
Proposition \ref{prop:25typescomplete} classifies all tetrahedra into the 25 types of Figure \ref{fig:tetrahedrontable}.
By Theorem \ref{thm:15types_sono1best}, the Sommerville No. 1 uniquely minimizes surface area among the tiles of the fifteen types in the orientation-preserving, Edmonds, and $2\pi/n$ groups circled in Figure \ref{fig:tetrahedrontable}. 
By Theorem \ref{thm:10types_sono1best}, the Sommerville No. 1 has less surface area than the tiles of the remaining ten types in Figure \ref{fig:tetrahedrontable}. 
Therefore the Sommerville No. 1 uniquely minimizes surface area among all tetrahedral tiles.
\end{proof}

Although this paper did not completely characterize all tetrahedral tiles, we conjecture that the list of tetrahedral tiles as outlined in Section \ref{sec:prevtiles} is complete.

\begin{conj}
The Sommerville and Goldberg lists of face-to-face tetrahedral tiles are complete. In particular, a tetrahedral tile has at least one dihedral angle of $\pi/2$.
\end{conj}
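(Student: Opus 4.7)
The plan is to remove the surface-area bound that underlies the analysis of Sections \ref{sec:10types}--\ref{sec:remainingcases} and classify all face-to-face tetrahedral tiles rather than only those beating the Sommerville No. 1. I would attack the two-part conjecture in sequence: first prove that every face-to-face tetrahedral tile has at least one dihedral angle equal to $\pi/2$, then use this structural constraint to show that the Sommerville and Goldberg lists are exhaustive.

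For the $\pi/2$ claim, note that it holds for every known tile: Sommerville Nos.~1--4 all exhibit $\pi/2$ angles (Table \ref{tab:2pi/n}), Sommerville (xi) with $p/q=\sqrt{2/3}$ lies in Goldberg's first family (Prop. \ref{prop:prevtetrahedra}), and each of the three Goldberg families has an edge carrying dihedral angle $\pi/2$ (Figs. \ref{fig:goldbergfamily1}--\ref{fig:goldbergfamily3}). By Propositions \ref{prop:OPsommerville14} and \ref{prop:somxi_tile} and Theorem \ref{thm:2pingp_notile}, the claim reduces to the ten non-characterized types of Fig. \ref{fig:tetrahedrontable}. For each such type, Table \ref{tab:10typesedgelengraphs} already forces several dihedral angles to have the form $\pi/n$. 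My approach would be to combine these with the strengthened determinant equations of Table \ref{tab:10typesdetcond} and the Wirth--Dreiding inequalities (Lemma \ref{lem:dihedralineq}) and to show that any real solution must have at least one of these $\pi/n$ angles with $n=2$. Concretely, this amounts to the enumeration of Section \ref{sec:code}, but with the bound $\sum n_{ij}\leq 9$ replaced by a finer algebraic argument.

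Granting the $\pi/2$ claim, at least four copies of the tile meet around one edge, which we may take to be $e_{12}$ with $\theta_{12}=\pi/2$. The faces $F_3$ and $F_4$ are then perpendicular across $e_{12}$, yielding strong Euclidean rigidity: via Lemmas \ref{lem:lawcosine} and \ref{lem:dihedralrelation} the remaining five dihedral angles satisfy an identity one dimension tighter than the generic case. I would rerun the edge-length-graph analysis of Section \ref{sec:edgelengraphs} for each of the ten non-characterized types with $\theta_{12}=\pi/2$ inserted, and verify case by case that each resulting one-parameter family coincides with a Goldberg family via Prop. \ref{prop:dihedralfixes}, or else reduces to a Sommerville case. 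Where multiple $\pi/2$ angles are forced, the tile should collapse to one of Sommerville Nos.~2--4.

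The main obstacle is precisely the loss of the finite-enumeration bound of Cor. \ref{cor:dihedralbdsommerville}: without $\sum n_{ij}\leq 9$, the edge-length-graph relations admit infinitely many coefficient tuples, so the interval minimization of Section \ref{sec:code} no longer terminates. A natural remedy is to invoke the Dehn invariant, acknowledged but left unused by the authors. For a tile, $\sum \ell(e_{ij})\otimes \bar{\theta}_{ij}=0$ in $\mathbb{R}\otimes_{\mathbb{Z}}(\mathbb{R}/\mathbb{Q}\pi)$, where $\bar{\theta}_{ij}$ denotes the class of $\theta_{ij}$ modulo $\mathbb{Q}\pi$. This forces strong $\mathbb{Q}$-linear dependencies among edge lengths whose dihedral angles are irrational multiples of $\pi$; combined with Tables \ref{tab:10typesedgelengraphs} and \ref{tab:10typesdetcond}, one expects the irrational angles either to cancel in compatible pairs (producing the Goldberg one-parameter families) or to force every dihedral angle into $\mathbb{Q}\pi$, thereby reducing to the $2\pi/n$ case already handled by Theorem \ref{thm:2pi/ntile}. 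Making the Dehn-invariant computation effective, identified as an open issue in the Introduction, is therefore the principal technical challenge the conjecture faces.
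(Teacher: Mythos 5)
This statement is left as an open conjecture in the paper --- the authors provide no proof, and explicitly note in the Introduction that they could not make the Dehn invariant useful. Your proposal is a reasonable research program, but it is not a proof, and you essentially concede this in your final sentence. The two load-bearing steps are both left unestablished. First, you assert that for each of the ten non-characterized types one can ``show that any real solution must have at least one of these $\pi/n$ angles with $n=2$,'' but no argument is given; nothing in Tables \ref{tab:10typesedgelengraphs} or \ref{tab:10typesdetcond} forces any of the $\pi/n$-form angles to equal $\pi/2$ rather than, say, $\pi/3$, and the entire finite enumeration of Section \ref{sec:code} collapses once Corollary \ref{cor:dihedralbdsommerville} is removed, because the coefficient sums $\sum n_{ij}$ are then unbounded and the case list is infinite. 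Second, your proposed remedy --- the vanishing of the Dehn invariant $\sum \ell(e_{ij})\otimes \bar{\theta}_{ij}$ --- is a correct necessary condition for a tile, but ``one expects the irrational angles either to cancel in compatible pairs \ldots or to force every dihedral angle into $\mathbb{Q}\pi$'' is an expectation, not a deduction: the tensor relation mixes edge \emph{lengths} with angle classes, and extracting from it a bound on the integers $n_{ij}$ in the $2\pi$-relations, or a dichotomy of the kind you describe, is precisely the computation the authors report being unable to carry out.

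There is also a smaller logical slip in the second half: even granting that $\theta_{12}=\pi/2$, your claim that ``at least four copies of the tile meet around one edge'' presumes all dihedral angles stacked around that edge equal $\pi/2$, whereas in a face-to-face tiling the edge $e_{12}$ may coincide with edges of the same length carrying different dihedral angles, so the stack need not consist of four right angles. The subsequent ``strong Euclidean rigidity'' you invoke from Lemmas \ref{lem:lawcosine} and \ref{lem:dihedralrelation} reduces the dimension of the solution variety by one at most, which still leaves multi-parameter families in several of the ten types; verifying ``case by case that each resulting one-parameter family coincides with a Goldberg family'' is not something Proposition \ref{prop:dihedralfixes} alone can deliver without first pinning the angles down to a one-parameter locus. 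In short, the proposal correctly identifies where the difficulty lies but does not overcome it; the conjecture remains open.
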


We also conjecture that any area-minimizing $n$-hedral tile is convex and is identical to its mirror image.

\begin{conj}
For any $n$, the least-area $n$-hedral tile is convex.
\end{conj}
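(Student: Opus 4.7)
The plan is to argue by contradiction: assume the minimum unit-volume normalized surface area among face-to-face $n$-hedral tiles is attained by a non-convex tile $P$, and construct from $P$ and its tiling a strictly better competitor.

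The pure isoperimetric side is in our favor. Since $P$ is non-convex, its convex hull $\widetilde P$ strictly contains $P$, so $\mathrm{vol}(\widetilde P) > \mathrm{vol}(P)$ and $\abs{\partial \widetilde P} < \abs{\partial P}$, giving a strictly smaller scale-invariant ratio $\abs{\partial \widetilde P}^3/\mathrm{vol}(\widetilde P)^2$. If $\widetilde P$ happened to be an $n$-hedron and happened to tile, we would be done immediately; in general neither property persists, and the plan to bridge this gap is local surgery at reflex features. Every reflex edge $e$ of $P$, whose dihedral angle $\theta > \pi$, must in a face-to-face tiling be matched against a convex edge of some neighbor $P'$, with $\theta$ plus the dihedral angles of the other tiles around $e$ summing to $2\pi$. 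Take the union $R = P \cup P'$: the reflex feature is interior to $R$, and $\partial R$ is strictly flatter there. Repartition $R$ by a new cutting plane chosen to lie in an existing face-plane of $P$ or $P'$ (to control face counts). A direct area comparison, projecting the old shared wall onto the new cutting plane, shows $\abs{\partial Q} + \abs{\partial Q'} < \abs{\partial P} + \abs{\partial P'}$, where $Q, Q'$ are the two pieces. Iterating this surgery throughout the tiling aims to produce a new tiling of strictly smaller normalized surface area.

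The principal obstacle is that the surgery is \emph{a priori} local: at each reflex feature it produces two dissimilar pieces $Q, Q'$, so the global result is a tiling by several prototiles rather than a single $n$-hedral tile, which does not directly contradict the assumed $n$-hedral minimality of $P$. Overcoming this requires a regularity/symmetry theorem asserting that an area-minimizing $n$-hedral tile admits a crystallographic group acting transitively on the tiles, so that the surgery can be carried out equivariantly and $Q = Q'$ holds as a single new prototile. A secondary difficulty is face-count bookkeeping: one must verify that each new piece has at most $n$ faces, which forces the cutting plane to lie in a face-plane of $P$ or $P'$, and one must then check that such a restricted choice still yields a strict area reduction rather than merely a wash. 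These two coupled difficulties---equivariance (global consistency of the surgery) and combinatorial face control---appear to constitute the essential content of the conjecture, and are why the statement is recorded here only as a conjecture rather than a theorem.
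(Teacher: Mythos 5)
This statement appears in the paper only as a conjecture; the paper offers no proof of it, so there is nothing to compare your argument against. What you have written is a program, not a proof, and you say as much yourself in your final paragraph: the two difficulties you isolate (equivariance of the surgery, i.e.\ the need for the minimizer to admit a tile-transitive symmetry group so that the two surgered pieces $Q$ and $Q'$ coincide as a single prototile, and the combinatorial control of face counts after recutting) are not technical loose ends but the entire content of the statement. An ``$n$-hedral tile'' here means a single prototile; a tiling by two distinct pieces of at most $n$ faces each, even with strictly smaller total area, says nothing about the monohedral minimizer, and no regularity or isohedrality theorem for area-minimizing polyhedral tiles is known (indeed, whether minimizers are even attained and whether they are isohedral are themselves open).

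Beyond the gaps you acknowledge, two steps of the sketch are already shaky. First, a reflex edge $e$ of $P$ with dihedral angle $\theta>\pi$ leaves angle $2\pi-\theta<\pi$ for the remaining tiles around $e$, but there may be several such tiles; taking the union with a single neighbor $P'$ does not in general make the reflex feature interior to $R=P\cup P'$, so the claimed flattening of $\partial R$ need not occur. Second, even when $R$ is a nice region, there is no guarantee that a plane lying in an existing face-plane of $P$ or $P'$ separates $R$ into exactly two pieces, nor that the projection comparison yields a strict area decrease for such a constrained cut; your own ``rather than merely a wash'' caveat concedes this. So the proposal correctly identifies why the statement is hard, but it does not constitute a proof, and the paper does not claim one either.
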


\begin{conj}
For any $n$, the least-area $n$-hedral tile is congruent to its mirror image.
\end{conj}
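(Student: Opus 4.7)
The plan is to argue by contradiction. Suppose $T$ is an area-minimizing unit-volume $n$-hedral tile but $T$ is not congruent to its mirror image $T'$. Reflecting any face-to-face tiling of $\R^3$ by $T$ yields a face-to-face tiling by $T'$, so $T'$ is also an $n$-hedral tile; since it has the same volume and surface area as $T$, it too is a minimizer. The conjecture is therefore equivalent to uniqueness (up to rigid motion) of the area-minimizing $n$-hedral tile.

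A first attempt is a doubling construction: glue $T$ to $T'$ along a shared face of area $A$ to obtain a self-mirror polyhedron $P$ with surface area $2S-2A$ and volume $2V$. A short computation shows that $P$ beats $T$ in the normalized quantity $S^{3}/V^{2}$ precisely when $A > (1-2^{-1/3})\,S \approx 0.206\,S$. This is a strong requirement that can fail when the two halves share only a small face, so the argument is insufficient on its own. The construction also turns an $n$-hedron into a $(2n-2)$-hedron, so even when it succeeds it shows only that the minimum of $S^{3}/V^{2}$ taken over \emph{all} face counts is achieved by a self-mirror shape, not that self-mirroring holds at each fixed $n$. One could try to refine this by gluing several copies of $T$ and $T'$ in a pattern that returns the face count to $n$, or by a more flexible symmetrization.

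A more tractable strategy is to extend the case-by-case analysis carried out in this paper for $n=4$. For each fixed $n$, enumerate the combinatorial types of $n$-hedra; within each type, show by a first-variation or symmetry argument on the edge lengths that the minimizer of $S^{3}/V^{2}$ over tiling members is self-mirror; then compare across types. This succeeds in the small cases $n=4,5,6$, where the known minimizers (Sommerville No. 1, the triangular prism, the cube) are all self-mirror, giving the conjecture for these values.

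The main obstacle is that for general $n$ neither the space of combinatorial types nor the set of $n$-hedral tiles within each type is well understood; there is no analogue of the 25-type classification of Section \ref{sec:classification}. Even existence of a minimizer is nontrivial, since one must rule out degenerate minimizing sequences and the ``tiles'' condition is not lower semicontinuous in any obvious way. Absent a tool that converts the global tiling constraint into local information compatible with first-variation arguments, a proof for general $n$ appears to require a genuinely new idea, which is why the statement is left as a conjecture here.
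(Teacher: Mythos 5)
This statement is left as a conjecture in the paper; the authors offer no proof, only the remark that it is consistent with the proven minimizers for $n=4,5,6$ and the conjectured ones for $7\leq n\leq 14$. Your proposal correctly stops short of claiming a proof, and your supporting observations are essentially sound: the reflection argument showing that the mirror image $T'$ of a minimizing tile $T$ is again a minimizing tile is correct, and your doubling computation checks out, since gluing along a face of area $A$ gives normalized ratio $2(S-A)^3/V^2$, which beats $S^3/V^2$ exactly when $A>(1-2^{-1/3})S$. Two small caveats. First, the conjecture is not \emph{equivalent} to uniqueness of the minimizer up to rigid motion: uniqueness (allowing reflections) implies the conjecture via your reflection argument, but the converse fails, since there could in principle be several non-congruent minimizers each of which happens to be self-mirror. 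Second, your claim that the case-by-case strategy ``gives the conjecture'' for $n=4,5,6$ is really just a restatement that the known minimizers in those cases are self-mirror, which is exactly the consistency check the paper already records; it is not an independent argument. With those qualifications, your assessment that a proof for general $n$ would require new ideas matches the paper's own position in leaving this as an open conjecture.
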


\begin{remark}
This is consistent with the proven tiles for $n=4,5,6$. It is also consistent with the conjectured least-area tiles for $7 \leq n \leq 14$ presented by Gallagher \emph{et al.} \cite{Gal}.
\end{remark}

Finally, we comment on how the methods in this paper might be applicable to $n$-hedral tiles for $n>4$ and to non-face-to-face tilings.

\subsection*{$n$-hedra}
For $n>4$, not all $n$-hedra are combinatorially equivalent.
This may present more difficulty, but there are also potentially more tiling
restrictions because different polygonal shapes cannot coincide in a face-to-face tile.
An extra difficulty might also be that the dihedral angles no longer determine
the polyhedron as in the tetrahedron case (Prop. \ref{prop:dihedralfixes}).
One of the main tools developed in this paper is the edge-length graph
(Sect. \ref{sec:edgelengraphs}). For general $n$-hedra, we can easily generalize
the edge-length graph to have nodes being $(n-1)$-tuples.
However, this loses some information, as non-triangular faces cannot be uniquely identified by edge lengths alone.
Bounds on the dihedral angles in the spirit of Section \ref{sec:boundiso}
may also be obtained by observing that for a good isoperimetric candidate,
for a given volume, the diameter cannot be large. Then for convex polyhedra,
a dihedral angle cannot be very small, otherwise the volume would be close to 0.
Combinatorial explosion may also be a concern:
in the next unproven case, $n=7$, the combinatorial shape of the pentagonal prism
has 15 edges. Suppose each dihedral angle is of the form $2\pi/n$
in the spirit of Theorem \ref{thm:2pi/ntile}. If there are 20 values of $n$,
then the number of cases could be of the order $10^{19}$.
Similarly, the \numcasesinitial\ cases considered by code in Section \ref{sec:code}
may increase substantially in number.

\subsection*{Non-face-to-face tiles}
For non-face-to-face tetrahedral tiles, the linear equations have to be modified.
If two edges of different lengths coincide, then the linear combinations would
contain dihedral angles associated with different edge lengths,
unlike those presented in Table \ref{tab:10typesedgelengraphs}.
If an edge coincides with a face of an adjacent tile, then the dihedral angles
will sum to $\pi$ instead of $2\pi$. Both of these effects can also occur together.
The difficulty may be the classification of all tiling behaviors that arise from these effects.
Imposing the assumption that the tiling is periodic
may help in navigating these difficulties.



\bibliographystyle{alpha}

\vspace{.7cm}
\noindent Eliot Bongiovanni\\
Michigan State University \\
\url{eliotbonge@gmail.com}

\vspace{.7cm}
\noindent Alejandro Diaz\\
University of Maryland, College Park \\
\url{diaza5252@gmail.com}

\vspace{.7cm}
\noindent Arjun Kakkar\\
Williams College \\
\url{ak23@williams.edu}

\vspace{.7cm}
\noindent Nat Sothanaphan\\
Massachusetts Institute of Technology \\
\url{natsothanaphan@gmail.com}

\end{document}